\numberwithin{equation}{section}
\pgfplotsset{compat=newest}
\tikzset{
    >=stealth',
    punkt/.style={
           rectangle,
           rounded corners,
           draw=black, very thick,
           text width=6.5em,
           minimum height=2em,
           text centered},
    pil/.style={
           ->,
           thick,
           shorten <=2pt,
           shorten >=2pt,}
}
\tikzstyle{block} = [rectangle, rounded corners, minimum width= 3cm, minimum height=1cm, text centered, draw=black, fill=blue!20,]
\tikzstyle{invisbleblock} = [minimum width= 3em, minimum height=1cm, text centered, ]
\tikzstyle{decision} = [diamond, minimum width=3cm, minimum height=1cm, text centered, draw=black, fill=orange!20]
\tikzstyle{arrow} = [thick,->,>=stealth]
\tikzstyle{line} = [thick,-]
\title{	Generalized Fourier Diffraction Theorem and Filtered Backpropagation for Tomographic Reconstruction}
\author{Clemens Kirisits$^{1,4}$\\
	{\footnotesize\href{mailto:clemens.kirisits@univie.ac.at}{clemens.kirisits@univie.ac.at}}  
	\and Michael Quellmalz$^2$\\
	{\footnotesize\href{mailto:quellmalz@math.tu-berlin.de}{quellmalz@math.tu-berlin.de}}
	\and Eric Setterqvist$^3$\\
	{\footnotesize\href{mailto:eric.setterqvist@santa-anna.se}{eric.setterqvist@santa-anna.se}}  
}
\date{\today}
\titleformat{\section}[block]{\large\sc\filcenter}{\thesection.}{0.5ex}{}[]
\titleformat{\subsection}[runin]{\bf}{\thesubsection.}{0.5ex}{}[.]
\titleformat{\subsubsection}[runin]{\bf}{\thesubsubsection.}{0.5ex}{}[.] 
\newtheorem{lemma}{Lemma}[section]
\newaliascnt{proposition}{lemma}
\newaliascnt{corollary}{lemma}
\newtheorem{corollary}[corollary]{Corollary}
\newaliascnt{theorem}{lemma}
\newtheorem{theorem}[theorem]{Theorem}
\newaliascnt{definition}{lemma}
\newaliascnt{assumption}{lemma}
\newaliascnt{notation}{lemma}
\newaliascnt{example}{lemma}
\newtheorem{example}[example]{Example}
\newaliascnt{experiment}{lemma}
\newaliascnt{remark}{lemma}
\newtheorem{remark}[remark]{Remark}
\theoremstyle{nonumberplain}
\newtheorem{proof}{Proof}
\newcommand{\N}{\mathds{N}}
\newcommand{\R}{\mathds{R}}
\newcommand{\C}{\mathds{C}}
\renewcommand{\S}{\mathbb{S}}
\newcommand{\abs}[1]{\left|#1\right|}
\newcommand{\norm}[1]{\left\|#1\right\|}
\renewcommand{\Re}{\operatorname{Re}}
\newcommand{\e}{\mathrm e}
\let\ii\i
\renewcommand{\i}{\mathrm i}
\newcommand{\B}{\mathcal{B}}
\newcommand{\dd}{\, \mathrm{d} }
\newcommand{\ba}{\mathbf{a}}
\newcommand{\bd}{\mathbf{d}}
\newcommand{\bn}{\mathbf{n}}
\newcommand{\br}{{\mathbf r}}
\newcommand{\bs}{{\bf s}}
\newcommand{\bx}{{\bf x}}
\newcommand{\by}{{\bf y}}
\newcommand{\bk}{{\bf k}}
\newcommand{\bt}{{\bf t}}
\newcommand{\bu}{{\bf u}}
\newcommand{\bv}{{\bf v}}
\newcommand{\be}{{\bf e}}
\newcommand{\bg}{{\bf g}}
\newcommand{\bz}{{\bf z}}
\newcommand{\bo}{{\bf 0}}
\newcommand{\bh}{{\bf h}}
\newcommand{\bp}{{\bf p}}
\newcommand{\ui}{u^{\mathrm{inc}}}
\newcommand{\sym}{\mathrm{sym}}
\newcommand{\utot}{u^{\mathrm{tot}}}
\newcommand{\us}{u^{\mathrm{sca}}}
\newcommand{\rM}{r_{\mathrm{M}}}
\newcommand{\sgn}{\operatorname{sgn}}
\newcommand{\supp}{\operatorname{supp}}
\newcommand{\ktran}{\mathcal{F}}
\begin{document}

\maketitle
\thispagestyle{empty}
\begin{center}
	\parbox[t]{11em}{\footnotesize
		\hspace*{-1ex}$^1$Faculty of Mathematics\\
		University of Vienna\\
		Oskar-Morgenstern-Platz 1\\
		A-1090 Vienna, Austria}
	\hspace{1em}
	\parbox[t]{11em}{\footnotesize
		\hspace*{-1ex}$^2$Institute of Mathematics\\
		Technical University Berlin\\
		Straße des 17. Juni 136 \\
		D-10623 Berlin, Germany}
	\hspace{1em}
	\parbox[t]{15em}{\footnotesize
		\hspace*{-1ex}$^3$Santa Anna IT Research Institute\\
		SE-58183 Link\"oping, Sweden} 
\end{center}
\begin{center}
	\parbox[t]{19em}{\footnotesize
		\hspace*{-1ex}$^4$Christian Doppler Laboratory for\\
		\hspace*{1em}Mathematical Modeling and Simulation of\\
		\hspace*{1em}Next-Generation Ultrasound Devices (MaMSi)\\
		Oskar-Morgenstern-Platz 1\\
		A-1090 Vienna, Austria}
\end{center}

\begin{abstract}
This paper concerns diffraction-tomographic reconstruction of an object characterized by its scattering potential. 
We establish a rigorous generalization of the Fourier diffraction theorem in arbitrary dimension, giving a precise relation in the Fourier domain between measurements of the scattered wave and reconstructions of the scattering potential. 
With this theorem at hand, Fourier coverages for different experimental setups are investigated taking into account parameters such as object orientation, direction of incidence and frequency of illumination. 
Allowing for simultaneous and discontinuous variation of these parameters, a general filtered backpropagation formula is derived resulting in an explicit approximation of the scattering potential for a large class of experimental setups.
\end{abstract}

\section{Introduction} \label{sec:exp}

\paragraph{The Helmholtz equation.}
We consider an inverse source problem for the Helmholtz equation
	\begin{equation}\label{eq:hh-intro}
	-(\Delta + k_0^2) u = g \quad \text{in } \R^d,
	\end{equation}
	where $k_0$ is a positive constant and $g$ an integrable function with compact support. Given measurements of the unique outgoing solution $u$, the aim is to reconstruct $g$. Our first result connects the Fourier transform of $g$ with that of $u$ restricted to a hyperplane and may be seen as a generalization of the well-known Fourier diffraction theorem \cite{KakSla01,NatWue01,Wol69}. Let $\tilde \ktran \colon \mathcal{S}'(\R^d) \to \mathcal{S}'(\R^d)$ be the partial Fourier transform along the first $(d-1)$ coordinates. 
	Then, $\tilde \ktran u$ is a locally integrable function given by
	\begin{equation}\label{eq:fdt-intro}
	\tilde \ktran u(\bx, \rM) = \sqrt{\frac{\pi}{2}} \frac{\i}{\kappa} \left(\e^{\i \kappa \rM} \ktran (g^-)(\bx,\kappa) + \e^{-\i \kappa \rM} \ktran (g^+)(\bx,-\kappa) \right), \quad \bx \in \R^{d-1},\ \rM\in \R,
	\end{equation}
	where $\kappa = \kappa(\bx)$ is the principal square root of $k_0^2 - \abs{\bx}^2$ and $\ktran$ is the Fourier transform on $\R^d$, continued analytically to $\C^d$ for $\abs{\bx}^2 > k_0^2$. The functions $g^+$ and $g^-$ are defined by $g^\pm(\br) = g(\br)$ if $r_d \gtrless \rM$ and $g^\pm(\br) = 0$ otherwise. Assuming that $u$ is measured on the hyperplane $\{\br \in \R^d : r_d = \rM \}$, then \eqref{eq:fdt-intro} relates the spatial frequency components of the data to those of $g^+$ and $g^-$.
	
\autoref{thm:fourierdiffraction} where \eqref{eq:fdt-intro} is proved generalizes \cite[Thm.\ 3.1]{KirQueRitSchSet21}, which covers only the case $d=3$ and requires the compactly supported inhomogeneity $g$ to belong to $L^p(\R^3)$ with $p>1$. The stricter condition on $g$ in \cite[Thm.\ 3.1]{KirQueRitSchSet21} is a consequence of the dependency on integrability estimates of $u$, see \autoref{rem:Lp-estimates}, in its proof. In contrast, the proof of \autoref{thm:fourierdiffraction} relies on a certain continuity of the map $g \mapsto u$ established in \autoref{thm:Rcontinuous} for which the weaker assumption $g\in L^1(\R^d)$ is sufficient.
There exist many other formulations of the Fourier diffraction theorem in $d\in\{2,3\}$ such as \cite{BorWol19,Dev82,KakSla01,MulSchGuc15,NatWue01,Wol69} that only consider the plane wave case discussed in the following paragraph.
Among these references, \cite{NatWue01} imposes the stronger assumption that $g\in L^2(\R^d)$, while the others do not state the conditions explicitly.
In general, the derivations in the above cited references are based on the plane wave decomposition of the Green's function of the Helmholtz equation. The proof of \autoref{thm:fourierdiffraction} on the other hand is done in the framework of tempered distributions with the major component being, besides \autoref{thm:Rcontinuous}, the partial Fourier transform of the Green's function given in \autoref{thm:G1}.

\paragraph{Diffraction tomography.}
The Helmholtz equation \eqref{eq:hh-intro} arises as a model for the scattering of time-harmonic waves $U(\br,t) = \operatorname{Re} (\utot(\br)\e^{-\i \omega t})$ from a bounded inhomogeneity. Assuming that the wave motion is caused by an incident field $\ui$ propagating through a homogeneous background until it meets the scatterer, a common model for the resulting scattered field $\us = \utot - \ui$ is
\begin{equation}
	-(\Delta + k^2_0) \us = k_0^2f\utot\quad \text{in } \R^{ d}. \label{eq:reduced}
\end{equation}
In addition, $\us$ satisfies the Sommerfeld radiation condition, see \cite[Chap.\ 8.1]{ColKre19}. In this context, $k_0>0$ is the wave number of the incident field $\ui$, which is assumed to solve $\Delta \ui + k_0^2\ui = 0$, and the normalized scattering potential $f$ is given by
\begin{equation}\label{eq:f}
  f(\br) = \frac{n(\br)^2}{n_0^2} - 1, \quad \br \in \R^d,
\end{equation}
where $n$ is the refractive index. Outside the bounded inhomogeneity, we have $n(\br) \equiv n_0$, so $f$ is compactly supported. In general $f$ has a nonzero imaginary part in order to allow for absorption.

Two common simplifications of this scattering model are the Born and the Rytov approximation, each leading to an equation of the form \eqref{eq:hh-intro}. The first-order Born approximation neglects the term $k_0^2 f \us$ on the right-hand side of \eqref{eq:reduced} and reads
\begin{equation}\label{eq:hhborn}
	-(\Delta + k^2_0) \us = k_0^2f\ui.
\end{equation}
The first-order Rytov approximation is based on the ansatz $\utot = \ui \e^{\varphi}$ with a complex phase function $\varphi$ and leads to
\begin{equation}\label{eq:hhrytov}
	-(\Delta + k^2_0) (\ui \varphi) = k_0^2f\ui.
\end{equation}
Further details on the derivation and validity of these approximations can be found, for instance, in \cite[Chap.\ 6]{KakSla01} and also \cite{FauKirQueSchSet23}. Within this framework, the inverse problem of diffraction tomography, see \cite{Dev12,KakSla01,NatWue01}, can be formulated as follows: Given knowledge of the incident field $\ui$ as well as measurements of the scattered wave on a hyperplane in $\R^d$, recover the scattering potential $f$ based on \eqref{eq:hhborn} or \eqref{eq:hhrytov}.

A key result underlying diffraction tomography, the Fourier diffraction theorem, can be obtained as a special case of \eqref{eq:fdt-intro} by setting $g = k_0^2 f \ui$, see \eqref{eq:hhborn}, and assuming that (i) measurements are taken outside the support of $f$ and (ii) the incident field is a plane wave $\ui (\br) = \e^{\i k_0 \br \cdot \bs}$ propagating in direction $\bs \in \mathbb{S}^{d-1}$. Then \eqref{eq:fdt-intro} becomes the more familiar
\begin{equation}\label{eq:fdt}
	\tilde \ktran u (\bx, \rM) = \sqrt{\frac\pi2}\, \frac{ \i k_0^2 \e^{\pm\i \kappa \rM}}{\kappa} \mathcal{F}f \left( (\bx,\pm\kappa) - k_0\bs \right) \quad \text{if } \rM \gtrless r_d \text{ for all } \br \in \supp f.
\end{equation}
Note that \eqref{eq:fdt-intro} can be used to obtain a relation between scattered wave and scattering potential even when $\ui$ is not a plane wave, see \autoref{rem:applicability}.

In applications, mainly the spatial frequencies $\bx \in \R^{d-1}$ with $\abs{\bx} < k_0$ are relevant. This provides information about $\ktran f$ at the points $(\bx, \pm \kappa (\bx))-k_0\bs $ on a sphere in $\R^d$ with radius $k_0$ and center $-k_0\bs$. Knowledge of $\ktran f$ on this set only is not sufficient for a reasonable recovery of $f$. Therefore, reconstruction algorithms in diffraction tomography crucially rely on data collection strategies gathering additional information by varying one or more of the following parameters of the experiment: (i) the direction $\bs$ of the incident wave, (ii) the orientation of the scatterer described by a rotation matrix $R\in SO(d)$, or (iii) the wave number $k_0$ of $\ui$. In \autoref{sec:coverage}, we investigate how changes in each of these parameters (plus additional ones which are shown to be ineffective) influence the coverage in Fourier space. A general experiment, where all parameters are allowed to change simultaneously depending on time $t\in[0,L]$, leads in $\R^d$ to the Fourier coverage
\begin{equation*}
	\mathcal{Y} = \left\{ R(t) \left( (\bx,\pm \kappa(\bx,t)) - k_0(t)\, \bs(t) \right) \in \R^d : 0\le t \le L, \, |\bx| < k_0(t) \right\},
\end{equation*}
where $\kappa$ depends on $t$ through $k_0$.

\paragraph{Filtered backpropagation.}
Filtered backpropagation, as pioneered in \cite{Dev82}, provides an explicit reconstruction formula for
\begin{equation*}
	f_\mathcal{Y}(\br) \coloneqq
  (2\pi)^{-\frac{d}{2}} \int_{\mathcal{Y}} \ktran f(\by) \e^{\i \br \cdot \by} \dd \by
\end{equation*}
for every $\br \in \R^d$. The idea is to first apply the change of coordinates
\begin{equation*}
	\by = T(\bx,t) \coloneqq R(t) \left( (\bx, \kappa(\bx,t)) - k_0(t)\, \bs(t) \right),
\end{equation*}
and then use the Fourier diffraction theorem \eqref{eq:fdt} to replace the spatial frequency components of $f$ with those of the measurements $u$. One issue with this approach is that $T$ is far from injective in general. Therefore, in order to correctly extend filtered backpropagation formulas to the general setting proposed here, one has to account for the lack of injectivity by means of the Banach indicatrix
\begin{equation*}
	\operatorname{Card}\left(T^{-1}(\by)\right),
\end{equation*}
where $\operatorname{Card}$ denotes the counting measure. While the Banach indicatrix can be difficult to determine in general, we suggest a numerical procedure for estimating it.

Another issue, not only with filtered backpropagation but with diffraction tomographic methods in general, is the missing cone problem, cf.\ \cite{krauze2020optical,LeeKimOhPar21,LimLeeJinShiLee15,SunDas11}. This is the observation that for many experimental setups the Fourier coverage has significant cone-like holes close to the origin, see \autoref{fig:rot_wave-2d} or \autoref{fig:cover3}, for example. Consequently, a considerable portion of the low spatial frequencies of $f$ is not available for reconstruction, thus leading to poor results. The missing cone problem can be overcome, for instance, by consecutively rotating the object around more than one axis or by subsequently illuminating from more than one direction during rotation. In order to correctly incorporate the resulting measurements into a single backpropagation formula, we allow the functions $R(t)$, $\bs(t)$ and $k_0(t)$, and hence $T$, to have jump discontinuities. We establish the corresponding backpropagation formula allowing for noninjective and discontinuous $T$ in \autoref{thm:reconstruction}. Subsequently, we present an improvement of this formula for real-valued $f$ that exploits the conjugate symmetry of $\hat f$ and ensures that the reconstruction $f_{\mathcal{Y}}$ is real-valued as well. In general, it enlarges the Fourier coverage while reducing the amount of data required to achieve a certain coverage. Numerical tests show that the new backpropagation method can provide a reconstruction quality similar to the inverse NDFT method while being faster. The speed advantage becomes especially relevant when the reconstruction is used inside an iterative method such as for phase retrieval, see \cite{BeiQue22,BeiQue23}, where the Banach indicatrix needs to be computed only once.

\paragraph{Outline.}
This article is organized as follows. After reviewing certain results concerning the well-posedness of the forward problem $g \mapsto u$ associated to \eqref{eq:hh-intro} in \autoref{sec:hh}, we prove the generalized Fourier diffraction theorem in \autoref{sec:back}. \autoref{sec:coverage} is devoted to the systematic study of Fourier coverages resulting from various experimental setups. A universal filtered backpropagation formula together with some extensions and special cases is derived in \autoref{sec:backprop}. Finally, the discretization is discussed in \autoref{sec:discretization}, where we also present an estimation method of the Banach indicatrix. Numerical experiments are performed in \autoref{sec:numerics}.
\section{The Helmholtz equation in $\R^d$} \label{sec:hh}

In this section we recall results concerning outgoing solutions to the Helmholtz equation
\begin{equation}\label{eq:HH}
	-(\Delta + k_0^2) u(\br) = g(\br), \quad \br \in \R^d,
\end{equation}
for compactly supported $g$. A solution $u$ of \eqref{eq:HH} is \emph{outgoing,} if it satisfies the \emph{Sommerfeld radiation condition}  
\begin{equation}\label{eq:src}
	\lim_{r \to \infty} r^{\frac{d-1}{2}} \left( \frac{\partial u}{\partial r} - \i k_0 u \right) = 0
\end{equation}
uniformly in $\bs$, where $\br = r \bs$, $r =\abs{\br}$, and $\partial/\partial r$ denotes the radial derivative.
The significance of the Sommerfeld radiation condition is twofold. First, it characterizes outgoing waves. That is, if $u$ satisfies \eqref{eq:HH} and \eqref{eq:src}, then $U(\br,t) = \Re \left( u(\br) \e^{-\i \omega t} \right)$, $\omega > 0$, physically corresponds to a wave propagating away from the scatterer \cite[Chap.\ IV,§5]{CouHil62}. Second, the Sommerfeld radiation condition ensures uniqueness for \eqref{eq:HH}, see \autoref{thm:uniqueness}. Due to the hypoellipticity of $\Delta + k_0^2$, every distributional solution $u$ of \eqref{eq:HH} is smooth on $\R^d \setminus \supp g$, so that the differentiability requirement implicit in \eqref{eq:src} is always met for compactly supported $g$, see \cite{Fol95}.

We denote the space of test functions by $\mathcal{D}(\R^d)$, which consists of all compactly supported smooth functions,
and the space of distributions by $\mathcal{D}'(\R^d)$.
Furthermore, we will need the Schwartz space $\mathcal{S}(\R^d)$ of rapidly decreasing, smooth functions and the space of tempered distributions $\mathcal{S}'(\R^d)$.

An outgoing fundamental solution of the $d$-dimensional Helmholtz operator $-\Delta - k_0^2$ is given by 
\begin{equation}\label{eq:G}
	G(\br) = \frac{\i}{4} \left( \frac{k_0}{2 \pi |\br|} \right)^\frac{d-2}{2} H_{\frac{d-2}{2}}^{(1)} (k_0 \abs{\br}),
\end{equation}
where $H^{(1)}_a$ is the Hankel function of the first kind and order $a$. See \cite[Chap.\ 9]{McL00} for a derivation of \eqref{eq:G}. The function $G$ is also known as {Green's function} for the Helmholtz equation. Note that $G \in C^\infty (\R^d \setminus \{ 0\})$. Moreover, the limiting forms
\begin{equation}\label{eq:Hankel-limit-small}
\begin{aligned}
	H_a^{(1)}(z) \sim
		\begin{cases}
			-\frac{\i}{\pi} \Gamma(a) \left( \frac{z}{2} \right)^{-a}, & \operatorname{Re} a > 0, \\
			\frac{2\i}{\pi} \log (z), & a= 0 ,
		\end{cases}
\end{aligned}
\qquad \text{for} \quad z \to 0,
\end{equation}
and
\begin{align}\label{eq:Hankel-limit-large}
	H_a^{(1)}(z) \sim \sqrt{\frac{2}{\pi z}} \e^{\i\left( z - \frac{a \pi}{2} - \frac{\pi}{4} \right)}, \qquad \text{for} \quad z \to \infty,
\end{align}
imply that $G$ belongs to $L^1_{\mathrm{loc}}(\R^d) \cap \mathcal{S}'(\R^d)$ and that
\begin{equation}\label{eq:growth-G}
	G(\br) = \mathcal{O}\left(\left|\br\right|^{\frac{1-d}{2}}\right), \qquad \text{for} \quad  \left|\br\right|\rightarrow\infty,
\end{equation}
see \cite[\href{https://dlmf.nist.gov/10.2.E5}{(10.2.5)},\href{https://dlmf.nist.gov/10.7.E2}{(10.7.2)},\href{https://dlmf.nist.gov/10.7.E7}{(10.7.7)}]{DLMF}. Concerning tomographic reconstructions in the two- and three-dimensional space, notable special cases of \eqref{eq:G} are
\begin{equation}\label{eq:G123}
	G(\br) =
	\begin{dcases}
		\frac{ \i \e^{\i k_0 |\br|}}{2 k_0}, & d = 1, \\
		\frac{\i}{4}  H_{0}^{(1)} (k_0 \abs{\br}), & d = 2, \\
		\frac{ \e^{\i k_0 |\br|}}{4 \pi |\br|}, & d = 3.
	\end{dcases}
\end{equation}

The following theorem shows that outgoing solutions of \eqref{eq:HH} are unique in $\mathcal D'(\R^d)$ and that, in particular, $G$ is unique. Related results are \cite{Rel43,Som12}. See also \cite[Chap.\ IV,§5]{CouHil62} or \cite[Thm.\ 9.11]{McL00}.

\begin{theorem}\label{thm:uniqueness} For every $g \in \mathcal{D}'(\R^d)$ with compact support, there is at most one outgoing solution of \eqref{eq:HH}.
\end{theorem}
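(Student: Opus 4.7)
The plan is to reduce to uniqueness of the zero solution and then combine a Rellich-type estimate with the unique continuation principle for the Helmholtz operator. By linearity, if $u_1, u_2 \in \mathcal{D}'(\R^d)$ are two outgoing solutions of \eqref{eq:HH} corresponding to the same compactly supported $g$, the difference $v = u_1 - u_2$ satisfies $(\Delta + k_0^2) v = 0$ distributionally on all of $\R^d$. Hypoellipticity of $\Delta + k_0^2$, which was noted earlier in the excerpt, then promotes $v$ to a function in $C^\infty(\R^d)$, and linearity of \eqref{eq:src} guarantees that $v$ still satisfies the Sommerfeld radiation condition. The goal becomes to show that this classical solution vanishes identically.

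First I would turn the pointwise condition \eqref{eq:src} into $L^2$-decay of $v$ on spheres. Writing $\partial B_R = \{\br \in \R^d : |\br| = R\}$ and combining the uniform bound $|\partial_r v - \i k_0 v| = o(R^{(1-d)/2})$ with the surface area estimate $|\partial B_R| = O(R^{d-1})$ yields $\int_{\partial B_R} |\partial_r v - \i k_0 v|^2 \, dS \to 0$ as $R \to \infty$. Applying Green's identity on $B_R$ together with $\Delta v = -k_0^2 v$ and the reality of $k_0$ gives $\Im \int_{\partial B_R} \bar v \, \partial_r v \, dS = 0$ for every $R > 0$. Expanding the square under the first integral then annihilates the cross term, leaving
\[
\int_{\partial B_R} \left( |\partial_r v|^2 + k_0^2 |v|^2 \right) dS \longrightarrow 0 \quad \text{as } R \to \infty,
\]
and in particular $\int_{\partial B_R} |v|^2 \, dS \to 0$.

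With this at hand, I would invoke Rellich's lemma: a smooth solution of $(\Delta + k_0^2) v = 0$ on the exterior of a ball whose $L^2$-means on spheres tend to zero vanishes on that exterior. Since $v$ is a smooth (and in fact real-analytic) solution of an elliptic equation with constant coefficients on the connected set $\R^d$ that vanishes on a nonempty open subset, the unique continuation principle forces $v \equiv 0$ everywhere. The main obstacle is really the first step, namely the passage from the pointwise radiation condition to the $L^2$-spherical decay via Green's identity and the reality of $k_0$; once this decay is established, Rellich's lemma and unique continuation conclude the argument immediately.
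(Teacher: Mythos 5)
Your argument is correct, but it takes a genuinely different route from the paper's. Both proofs start with the same reduction: the difference $v$ of two outgoing solutions is, by hypoellipticity, a smooth solution of the homogeneous Helmholtz equation on all of $\R^d$ that still satisfies \eqref{eq:src}. From there the paper invokes the Green representation theorem for radiating solutions (Colton--Kress, McLean): for any closed ball $B$ with $\br\notin B$ one has $v(\br)=\int_{\partial B}\bigl(v\,\nabla G_{\br}\cdot\bn-G_{\br}\,\nabla v\cdot\bn\bigr)\dd s$, and since both $v$ and $G_{\br}$ solve the homogeneous equation inside $B$, Green's second identity shows this boundary integral vanishes, so $v(\br)=0$ pointwise with no further machinery. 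You instead run the classical textbook chain: pass from the pointwise radiation condition to $\int_{\partial B_R}\bigl(|\partial_r v|^2+k_0^2|v|^2\bigr)\dd S\to 0$ using the surface-area count and the vanishing of $\Im\int_{\partial B_R}\bar v\,\partial_r v\,\dd S$ from Green's first identity, then apply Rellich's lemma to annihilate $v$ outside a large ball, and finish by unique continuation (real-analyticity of solutions of the constant-coefficient elliptic equation on the connected set $\R^d$). Each step of yours is sound, including the uniformity of \eqref{eq:src} needed for the $o(R^{1-d})$ bound on spheres. The trade-off is that your route makes the role of the radiation condition fully explicit but requires two extra standard inputs (Rellich's lemma and unique continuation), whereas the paper's route hides essentially the same Rellich-type estimate inside the cited representation theorem and in exchange obtains $v(\br)=0$ at every point directly, with no continuation step.
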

\begin{proof}
Suppose $u\in \mathcal{D}'(\R^d)$ is an outgoing solution of \eqref{eq:HH}. Every other solution of \eqref{eq:HH} can be written as $u+v$ where $\Delta v + k_0^2v = 0$ on $\R^d$. But $u+v$ can only be outgoing, if $v$ is. It now follows that $v$ must vanish identically from Green's formula, also known as Green's third identity,
\begin{equation} \label{eq:Greens-formula}
	v(\br) = \int_{\partial B} v \nabla G_\br \cdot \bn - G_\br \nabla v \cdot \bn \dd s,
\end{equation}
where $\br \in \R^d$ is arbitrary, $B$ is a closed ball not containing $\br$, $G_\br$ is a shorthand for $G(\br - \cdot)$ and $\bn$ is the outward pointing unit normal. See \cite[Thm.\ 2.5]{ColKre19} for $d=3$ or \cite[Thms.\ 7.12, 9.6]{McL00} for a more general formulation. Applying Green's second identity to \eqref{eq:Greens-formula} and exploiting the fact that both $v$ and $G_\br$ solve the homogeneous Helmholtz equation in $B$ shows that $v(\br) = 0$.
\end{proof}

\begin{theorem}\label{thm:src}
For every $g \in L^1(\R^d)$ with compact support, $u=g*G\in L^1_{\mathrm{loc}}(\R^d)\cap\mathcal{S}'(\R^d)$ is the unique distributional solution of the Helmholtz equation \eqref{eq:HH} that satisfies the Sommerfeld radiation condition \eqref{eq:src}.
\end{theorem}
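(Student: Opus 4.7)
The plan is to split the claim into uniqueness and existence. Uniqueness is immediate from \autoref{thm:uniqueness}, since any outgoing distributional solution of \eqref{eq:HH} is unique. It therefore suffices to verify that $u = g * G$ has each of the stated properties: that it lies in $L^1_{\mathrm{loc}}(\R^d) \cap \mathcal{S}'(\R^d)$, that it solves \eqref{eq:HH} in $\mathcal{D}'(\R^d)$, and that it satisfies the Sommerfeld radiation condition \eqref{eq:src}.

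For the regularity of $u$, I would use $G \in L^1_{\mathrm{loc}}(\R^d)$ together with the compact support of $g$: Fubini's theorem then gives $u \in L^1_{\mathrm{loc}}(\R^d)$, with $u(\br) = \int_K g(\by) G(\br - \by) \dd \by$ for a.e.\ $\br$, where $K = \supp g$. Tempered growth is inherited through the general fact that the convolution of a tempered distribution with a compactly supported integrable function is again tempered, so $u \in \mathcal{S}'(\R^d)$. That $u$ solves \eqref{eq:HH} distributionally then follows from $(-\Delta - k_0^2) G = \delta$ together with the identity $(-\Delta - k_0^2)(g * G) = g * \delta = g$, which is valid because $g$ has compact support.

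The main work lies in verifying \eqref{eq:src}, and this is the step I expect to be the principal obstacle. Because $g$ is compactly supported, $u$ is smooth on $\R^d \setminus K$ by hypoellipticity, and for $\abs{\br}$ larger than the diameter of $K$ I would differentiate under the integral sign to obtain
\begin{equation*}
  \left(\frac{\partial}{\partial r} - \i k_0\right) u(\br) = \int_K g(\by) \left[ \left( \frac{\partial}{\partial r} - \i k_0 \right) G(\br - \by) \right] \dd \by .
\end{equation*}
The task is to show that the bracketed expression is $o\bigl( \abs{\br}^{(1-d)/2} \bigr)$ as $\abs{\br} \to \infty$, uniformly for $\by \in K$ and for the angular variable $\bs = \br/\abs{\br}$.

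The key inputs are the large-argument asymptotic \eqref{eq:Hankel-limit-large} applied to $H^{(1)}_{(d-2)/2}$ and to its derivative (which satisfies an asymptotic of the same shape, cf.\ \href{https://dlmf.nist.gov/10.7.E8}{DLMF (10.7.8)}), combined with the elementary expansions $\abs{\br - \by} = r - \hat{\br}\cdot\by + \mathcal{O}(r^{-1})$ and $\partial_r \abs{\br - \by} = 1 + \mathcal{O}(r^{-1})$ as $r = \abs{\br} \to \infty$, with implicit constants depending only on $\sup_{\by \in K} \abs{\by}$. Substituting these into the formula \eqref{eq:G} for $G$ produces a leading term of the form $C r^{(1-d)/2} \e^{\i k_0 r} \e^{-\i k_0 \hat{\br}\cdot \by}$, and the action of $\partial_r - \i k_0$ annihilates this leading term, leaving a remainder of order $r^{-(d+1)/2}$ uniformly in $\by \in K$ and in $\bs \in \mathbb{S}^{d-1}$. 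Integrating against the bounded function $g$ over the compact set $K$ then yields $(\partial_r - \i k_0) u(\br) = \mathcal{O}\bigl(r^{-(d+1)/2}\bigr)$, which is $o\bigl(r^{(1-d)/2}\bigr)$, establishing \eqref{eq:src} and completing the existence part.
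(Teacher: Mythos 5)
Your proposal is correct and follows essentially the same route as the paper: uniqueness via \autoref{thm:uniqueness}, local integrability and temperedness from the compact support of $g$ together with $G\in L^1_{\mathrm{loc}}(\R^d)$ and its decay \eqref{eq:growth-G}, the distributional equation from the fundamental-solution property, and the radiation condition by reducing to the asymptotics of $G(\br-\by)$ uniformly over $\by\in\supp g$. The only difference is one of detail: where the paper compresses the last step into the observation that $\sup_{\by\in\supp g}\abs{\partial_r G(\br-\by)-\i k_0 G(\br-\by)}$ has the same asymptotics as the unshifted expression, you carry out the Hankel-function expansion explicitly, which is a correct elaboration of the same argument.
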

\begin{proof}
First, $u=g*G$ solves \eqref{eq:HH} in the distributional sense, because $G$ is a fundamental solution of $-\Delta - k_0^2$.
Next, we show that $u \in L^1_{\mathrm{loc}}(\R^d)\cap\mathcal{S}'(\R^d)$. Take a compact set $K\subset\R^d$ and a bounded set $B\supset\supp g$. Using the Fubini--Tonelli theorem, local integrability of $G$ and the Minkowski difference $K-B$, we obtain
\begin{align}
	\int_{K}\left|u(\br)\right|\dd\br
		&\leq	\int_{K}\int_{\R^d}\left|g(\by)G(\br-\by)\right|\dd\by\dd\br \notag \\
		&=		\int_{\R^d}\left|g(\by)\right|\int_{K}\left|G(\br-\by)\right|\dd\br\dd\by \notag \\
		&=		\int_{\R^d}\left|g(\by)\right|\int_{K-\by}\left|G(\bu)\right|\dd\bu\dd\by \notag \\
		&\leq	\int_{K-B}\left|G(\bu)\right|\dd\bu\,\int_{B}\left|g(\by)\right|\dd\by \notag \\
		&\leq	\norm{G}_{L^1(K-B)} \norm{g}_{L^1(\R^d)}. \label{eq:uL1loc}
\end{align}
Therefore, $u$ is locally integrable. By \eqref{eq:growth-G}, there exists a radius $R>0$ and a constant $C>0$ such that
\begin{align} \label{eq:uS'}
	\left|u(\br)\right| \leq \int_{B}\left|g (\by)G(\br-\by)\right|\dd\by\leq C \norm{g}_{L^1(\R^d)}
\end{align}
when $\left|\br\right|>R$. So, $u$ can be identified with a tempered distribution.

Concerning the Sommerfeld radiation condition let $\br \notin \supp g$. Then we have
\begin{equation*}
	\abs{\frac{\partial u}{\partial r}(\br) - \i k_0 u(\br)} \le \norm{g}_{L^1} \sup_{\by \in \supp g} \abs{\frac{\partial G}{\partial r}(\br - \by) - \i k_0 G(\br - \by)}.
\end{equation*}
Since $\supp g$ is compact, the right-hand side has the same asymptotic behavior for $\abs{\br} \to \infty$ as $\partial G/\partial r(\br) - \i k_0G(\br)$. Therefore, $u$ satisfies \eqref{eq:src}.

The uniqueness of $u$ follows from \autoref{thm:uniqueness}.
\end{proof}

\begin{remark} \label{rem:Lp-estimates}
  While \autoref{thm:src} only asserts that $u=g*G$ is a distributional solution of \eqref{eq:HH}, under slightly stronger assumptions it can be shown that $u$ is actually a strong solution. Specifically, let $d \ge 2$ and $g \in L^r(\R^d)$ with $r>\max(1,2d/(3+d))$ and compact support. Furthermore, let $p,q\ge1$ satisfy
    \begin{equation*}\label{eq:pq}
      p\le r,\quad 
      \frac{d+1}{2d}<\frac{1}{p}, \quad \frac{1}{q}<\frac{d-1}{2d}, \quad \frac{2}{d+1}\leq \frac{1}{p}-\frac{1}{q}\leq\frac{2}{d},  \quad \frac{1}{p}-\frac{1}{q}<1. 
    \end{equation*}
  Then one can apply the estimate
  \begin{equation*}
    \norm{\phi*G}_{L^q(\R^d)} \leq C \norm{\phi}_{L^p(\R^d)}, \quad \text{for all} \; \phi \in \mathcal{S}(\R^d),
  \end{equation*}
  from \cite{KenRuiSog87} (see also \cite[Thm.\ 2.1]{Eve17} and \cite[Thm.\ 6]{Gut04})
  to conclude that $u \in L^q(\R^d).$ Using elliptic regularity theory, one can argue that $u \in W^{2,p}_{\mathrm{loc}}(\R^d)$, see \cite[Prop.\ A.1]{EveWet15}.
\end{remark}

The proof of \autoref{thm:fourierdiffraction} requires the following continuity result for the map $g \mapsto u$. It takes into account the compact support of $g$ but otherwise requires less regularity than \autoref{rem:Lp-estimates}. We denote by $\B^d_R$ the open ball in $\R^d$ centered at $0$ with radius $R$.

\begin{theorem}\label{thm:Rcontinuous}
If $g_n \to 0$ in $L^1(\R^d)$ and $\bigcup_n \supp g_n$ is bounded, then $g_n*G \to 0$ in $\mathcal{S}'(\R^d)$.
\end{theorem}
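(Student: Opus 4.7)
The plan is to directly reuse the two estimates \eqref{eq:uL1loc} and \eqref{eq:uS'} established in the proof of \autoref{thm:src}, both of which are linear in $\|g_n\|_{L^1}$ with constants depending only on a bounding ball of the supports. It then suffices to observe that any $\phi \in \mathcal{S}(\R^d)$ lies in both $L^\infty(\R^d)$ and $L^1(\R^d)$.

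First I would fix $\phi \in \mathcal{S}(\R^d)$ and pick a ball $B$ containing $\bigcup_n \supp g_n$, which exists by hypothesis. Writing $u_n \coloneqq g_n * G$, the computation \eqref{eq:uL1loc} yields $\|u_n\|_{L^1(K)} \le \|G\|_{L^1(K-B)} \|g_n\|_{L^1}$ for every compact $K \subset \R^d$, while \eqref{eq:uS'} provides a radius $R > 0$ and a constant $C > 0$, depending only on $B$, such that $|u_n(\br)| \le C \|g_n\|_{L^1}$ whenever $|\br| > R$. The key point is that neither right-hand side involves $n$ apart from the factor $\|g_n\|_{L^1}$, which is possible precisely because $\bigcup_n \supp g_n$ is bounded.

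I would then split the duality pairing as
\begin{equation*}
	|\langle u_n, \phi\rangle| \le \int_{\B^d_R} |u_n(\br)| |\phi(\br)| \dd\br + \int_{\R^d \setminus \B^d_R} |u_n(\br)| |\phi(\br)| \dd\br,
\end{equation*}
bound the first integral by $\|\phi\|_{L^\infty} \|G\|_{L^1(\B^d_R - B)} \|g_n\|_{L^1}$ via \eqref{eq:uL1loc}, and the second by $C \|g_n\|_{L^1} \|\phi\|_{L^1}$ via \eqref{eq:uS'} together with the integrability of Schwartz functions. Combining yields $|\langle u_n, \phi\rangle| \le M(\phi, B, G)\, \|g_n\|_{L^1} \to 0$.

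Since the technical work is already packaged into \autoref{thm:src}, there is no substantial obstacle. The only point to verify is that the constants in both estimates can be taken uniform in $n$, which follows directly from the hypothesis that the supports lie in a common bounded set.
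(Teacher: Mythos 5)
Your proposal is correct and follows exactly the paper's own argument: split the pairing $\langle g_n*G,\phi\rangle$ over $\B^d_R$ and its complement, then apply \eqref{eq:uL1loc} to the first piece and \eqref{eq:uS'} to the second, with constants uniform in $n$ thanks to the common bound on the supports. Nothing further is needed.
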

\begin{proof}
Let $B = \bigcup_n \supp g_n$. There exists a sufficiently large $R>0$ such that for any $\phi\in \mathcal{S}(\R^d)$ we obtain
\begin{align*}
	\left|\int_{\R^d}g_n*G(\br)\phi(\br)\dd\br\right|
		&\leq\int_{\B^d_R}\left|g_n*G(\br)\phi(\br)\right|\dd\br+\int_{\R^d\backslash\B^d_R}\left|g_n*G(\br)\phi(\br)\right|\dd\br\\
		&\leq \left(\norm{G}_{L^1(\B^d_R-B)}\lVert\phi\rVert_{L^\infty(\R^d)}+C\lVert\phi\rVert_{L^1(\R^d)}\right)\lVert g_n\rVert_{L^1(\R^d)},
\end{align*}
where we have used \eqref{eq:uL1loc} for the first and \eqref{eq:uS'} for the second integral. For $n\to\infty$, we conclude that
\begin{equation*}
\int_{\R^d}g_n*G(\br)\phi(\br)\dd\br\rightarrow 0
\end{equation*}
and therefore $G*g_n\rightarrow 0$ in $\mathcal{S}^\prime$.
\end{proof}

\section{A generalized Fourier diffraction theorem} \label{sec:back}

We denote by $\ktran_{j}$ the partial Fourier transform with respect to the $j$-th coordinate. That is, if $\phi$ belongs to the Schwartz space $\mathcal{S}(\R^d)$, we define
\begin{equation*}
\mathcal{F}_{j} \phi(r_1,\ldots, r_{j-1}, k_j, r_{j+1},\ldots r_d) 
=  (2\pi)^{-\frac{1}{2}} \int_{\R} \phi(\br) \e^{-\i k_j r_j} \dd r_j.
\end{equation*}
Note that $\ktran_j$ can be extended to a continuous linear bijection with continuous inverse on the space of tempered distributions $\mathcal{S}'(\R^d)$. The usual $d$-dimensional Fourier transform is given by $\ktran = \ktran_{1} \circ \cdots \circ \ktran_{d}$. We also use the shorthand $\hat{g}$ for $\ktran g$.

If $d\geq 2$ the Fourier transform with respect to the first $d-1$ coordinates is abbreviated by
\begin{equation} \label{eq:Ftilde}
\tilde \ktran \coloneqq \ktran_{1} \circ \cdots \circ \ktran_{d-1}.
\end{equation}
Similarly, for $\br \in \R^d$ we define the truncated vector $\tilde \br = (r_1,\ldots, r_{d-1}) \in \R^{d-1}$. We also let
\begin{equation} \label{eq:kappa}
	\kappa: \R^{d-1} \to \C, \, \kappa (\tilde \br) \coloneqq
	\begin{cases}
		\sqrt{k_0^2-|\tilde \br|^2}, & |\tilde \br| \leq k_0,\\
		\i\sqrt{|\tilde \br|^2-k_0^2}, & |\tilde \br| > k_0.
	\end{cases}
\end{equation}
\begin{lemma}\label{thm:kappa} Let $d\ge 2$. Then $1/\kappa \in L^1_{\mathrm{loc}}(\R^{d})$ and $\phi \mapsto \int_{\R^d} \phi(\br)/\kappa(\tilde \br) \dd \br $ is a tempered distribution.
\end{lemma}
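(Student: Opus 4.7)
The plan is to split the proof into two parts. The crucial simplification is that $1/\kappa$ depends only on $\tilde \br \in \R^{d-1}$ and is constant in $r_d$, so its behavior along the $r_d$-axis is trivial.

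\emph{Local integrability.} Since $1/\kappa(\tilde \br)$ is independent of $r_d$, local integrability on $\R^d$ reduces via Fubini to local integrability on $\R^{d-1}$. Introducing polar coordinates $\tilde \br = \rho \bs$ with $\rho \ge 0$ and $\bs \in \S^{d-2}$ (for $d \ge 3$; the case $d = 2$ is just a one-dimensional integral), the integral $\int_{\B^{d-1}_R} 1/|\kappa(\tilde \br)|\dd \tilde \br$ reduces to a constant multiple of $\int_0^R \rho^{d-2}/|\kappa(\rho)| \dd \rho$. Here $|\kappa(\rho)| = \sqrt{|k_0-\rho|}\sqrt{k_0+\rho}$ has its only zero at $\rho = k_0$, near which $1/|\kappa(\rho)|$ behaves like $|\rho - k_0|^{-1/2}$, which is integrable in one dimension. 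Together with continuity of the integrand away from $\rho = k_0$, this yields the claim.

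\emph{Tempered distribution.} To show that $T\phi \coloneqq \int_{\R^d}\phi(\br)/\kappa(\tilde \br)\dd \br$ defines a continuous linear functional on $\mathcal{S}(\R^d)$, I would bound $|T\phi|$ by finitely many Schwartz seminorms after splitting the domain at $|\tilde \br| = 2k_0$. On the outer region $\{|\tilde \br| > 2k_0\}$ one has $|\kappa(\tilde \br)| = \sqrt{|\tilde \br|^2 - k_0^2} \ge \sqrt{3}\,k_0$, so $1/|\kappa|$ is uniformly bounded there and the contribution to $|T\phi|$ is controlled by a constant times $\norm{\phi}_{L^1(\R^d)}$, which itself is dominated by a Schwartz seminorm. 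On the slab $\{|\tilde \br| \le 2k_0\} \times \R$, the inequality $|\phi(\br)| \le C_N(\phi)(1+|r_d|)^{-N}$, obtained from the seminorm $\sup_{\br}(1+|\br|)^N|\phi(\br)|$ together with $|r_d| \le |\br|$, allows one to integrate first in $r_d$ and then reduce to $\int_{\B^{d-1}_{2k_0}} 1/|\kappa(\tilde \br)|\dd \tilde \br$, which is finite by the first step.

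The only delicate point is the vanishing of $\kappa$ on the sphere $|\tilde \br| = k_0$, but the $|\rho - k_0|^{-1/2}$ behavior disposes of this at once, in every dimension. Everything else is routine bookkeeping with Schwartz seminorms and Fubini.
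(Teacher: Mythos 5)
Your proposal is correct and follows essentially the same route as the paper's proof: local integrability is reduced to the one-dimensional integrability of $\abs{\rho-k_0}^{-1/2}$ near the singular sphere $\abs{\tilde\br}=k_0$ (the paper uses the equivalent bound $\abs{\kappa(\tilde\br)}\ge\sqrt{k_0\abs{k_0-\abs{\tilde\br}}}$), and the temperedness is obtained by the same splitting of $\R^d$ into a bounded slab in $\tilde\br$, handled via decay in $r_d$ and the $L^1$ bound on $1/\kappa$, and an outer region where $1/\abs{\kappa}$ is uniformly bounded and $\norm{\phi}_{L^1}$ is controlled by a Schwartz seminorm. No gaps.
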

\begin{proof}
Concerning the first claim we have $\abs{\kappa(\tilde \br)} \ge \sqrt{k_0 \abs{k_0 - \abs{\tilde \br}}}$ and therefore, for every $R>0$,
\begin{equation} \label{eq:kappa_L1loc}
	\int_{\mathcal{B}^{d-1}_R} \frac{\dd \tilde \br}{\abs{\kappa(\tilde \br)}} \le \int_{\mathcal{B}^{d-1}_R} \frac{\dd \tilde \br}{\sqrt{k_0 \abs{k_0 - \abs{\tilde \br}}}} \le C \int_0^R \frac{r^{d-2}\dd r}{\sqrt{\abs{k_0 - r}}} < +\infty.
\end{equation}
This shows that $1/\kappa \in L^1_{\mathrm{loc}}(\R^{d-1})$ and consequently $1/\kappa \in L^1_{\mathrm{loc}}(\R^{d})$ as well.

For the second claim let $R>k_0$. Then $|\kappa| \ge \sqrt{k_0 (R-k_0)}$ on $S \coloneqq \{ \br \in \R^d : \abs{\tilde \br} \ge R \}$, and for an arbitrary $\phi \in \mathcal{S}(\R^d)$ it follows that
\begin{align*}
	\abs{\int_{\R^d} \frac{\phi(\br)}{\kappa(\tilde \br)} \dd \br}
		&=	\abs{\int_{\R} \int_{\mathcal{B}^{d-1}_R}   \frac{\phi(\br)}{\kappa(\tilde \br)} \dd \tilde \br \dd x_d
			+ \int_{S}  \frac{\phi(\br)}{\kappa(\tilde \br)} \dd \br} \\
		&\le \norm{1/\kappa}_{L^1(\mathcal{B}^{d-1}_R)} \int_{\R}  \sup_{|\tilde \br| \le R} \abs{\phi(\tilde \br, x_d)} \dd x_d
			+ (k_0 (R-k_0))^{-\frac12} \int_{S} \abs{\phi(\br)} \dd \br \\
		&\le C \left( \int_{\R}  \sup_{|\tilde \br| \le R} \abs{\phi(\br)} \dd x_d
			+ \int_{\R^d} \abs{\phi(\br)} \dd \br \right).
\end{align*}
Both of these integrals can be bounded by appropriate seminorms on $\mathcal{S}(\R^d)$. For the second one we recall that $\mathcal{S}(\R^d)$ is continuously embedded in $L^1(\R^d)$. The first one can be estimated by
\begin{align*}
	\int_{\R}  \sup_{|\tilde \br| \le R} \abs{\phi(\br)} \dd r_d
		&= \int_{\R}  \frac{1+r_d^2}{1+r_d^2}\sup_{|\tilde \br| \le R} \abs{\phi(\br)} \dd r_d
			\le \sup_{r_d \in \R} \left( (1+r_d^2)\sup_{|\tilde \br| \le R} \abs{\phi(\br)} \right) \int_{\R} \frac{\dd y_d}{1+y_d^2}\\
		&\le C \sup_{\br \in \R^d} \abs{ (1+r_d^2) \phi(\br)}.
\end{align*}
\end{proof}
\begin{lemma} \label{thm:G1}
Let $d \geq 2$. The partial Fourier transform $\tilde \ktran G$ is given by the locally integrable function
\begin{equation}\label{eq:G1}
	\tilde \ktran G(\br) = (2\pi)^{\frac{1-d}{2}} \frac{\i \e^{\i \kappa(\tilde \br) |r_d|}}{2 \kappa(\tilde \br)}.
\end{equation}
\end{lemma}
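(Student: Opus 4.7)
My plan is to show that $H(\br) := (2\pi)^{(1-d)/2}\,\i\,\e^{\i\kappa(\tilde\br)|r_d|}/(2\kappa(\tilde\br))$ agrees with $\tilde\ktran G$ in $\mathcal{S}'(\R^d)$, proceeding in three steps.

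\emph{Regularity of $H$.} First I would verify $H \in L^1_{\mathrm{loc}}(\R^d) \cap \mathcal{S}'(\R^d)$. Since $|\e^{\i\kappa(\tilde\br)|r_d|}| = 1$ on the propagating region $\{|\tilde\br| < k_0\}$ and decays exponentially in $r_d$ on the evanescent region $\{|\tilde\br| > k_0\}$, one has $|H(\br)| \le C/|\kappa(\tilde\br)|$ on compacts, and \autoref{thm:kappa} provides local integrability. Temperedness is obtained by splitting the integral around the critical sphere $\{|\tilde\br| = k_0\}$ and bounding $\int|H\phi|\dd\br$ for $\phi\in\mathcal{S}(\R^d)$ in terms of Schwartz seminorms, in the style of the proof of \autoref{thm:kappa} and additionally using the exponential decay in $r_d$ on the evanescent side.

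\emph{Distributional PDE.} A direct distributional calculation, using $\partial_{r_d}(\e^{\i\kappa|r_d|}) = \i\kappa\,\mathrm{sgn}(r_d)\,\e^{\i\kappa|r_d|}$ and hence $\partial_{r_d}^2(\e^{\i\kappa|r_d|}) = -\kappa^2\,\e^{\i\kappa|r_d|} + 2\i\kappa\,\delta(r_d)$ (the latter from $\partial_{r_d}\,\mathrm{sgn}(r_d) = 2\delta(r_d)$), yields after multiplication by the prefactor of $H$,
\begin{equation*}
    -\bigl(\partial_{r_d}^2 + \kappa(\tilde\br)^2\bigr) H = (2\pi)^{-(d-1)/2}\,\delta(r_d)
\end{equation*}
in $\mathcal{S}'(\R^d)$. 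Applying $\tilde\ktran$ to $-(\Delta + k_0^2) G = \delta_{\bo}$ and recalling $\tilde\ktran\delta_{\bo} = (2\pi)^{-(d-1)/2}\,\delta(r_d)$ produces the same right-hand side for $\tilde\ktran G$. Hence the difference $v := \tilde\ktran G - H$ solves the homogeneous equation $(\partial_{r_d}^2 + \kappa^2) v = 0$ in $\mathcal{S}'(\R^d)$.

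\emph{Uniqueness.} To conclude $v = 0$ I would use a vanishing-viscosity argument. Replacing $k_0^2$ with $k_0^2 + \i\varepsilon$ for $\varepsilon > 0$ yields a modified wavenumber $\kappa_\varepsilon$ with strictly positive imaginary part and a corresponding Green's function $G_\varepsilon \in L^1(\R^d)$ with exponential decay. For this $L^1$ datum one verifies $\tilde\ktran G_\varepsilon = H_\varepsilon$ directly by Fubini combined with contour integration in $k_d$ (closing in the upper or lower half-plane depending on the sign of $r_d$). Letting $\varepsilon \to 0^+$, one has pointwise convergence $G_\varepsilon \to G$ and $H_\varepsilon \to H$; upgrading these to convergence in $\mathcal{S}'(\R^d)$ via dominated convergence and invoking the continuity of $\tilde\ktran$ on $\mathcal{S}'(\R^d)$ transfers the identity to the limit.

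The main obstacle is this last step: arguing that the "tempered" constraint alone does not rule out $v \neq 0$ in the propagating region (where both $\e^{\pm\i\kappa r_d}$ are bounded), so the outgoing character of $G$ must be used, and this is most naturally encoded via the $\varepsilon \to 0^+$ limit. Controlling that limit uniformly in Schwartz seminorms near $\{|\tilde\br| = k_0\}$, where $\kappa$ vanishes and $G_\varepsilon$ transitions from exponentially decaying to merely oscillatory behaviour, is the delicate part; the integrability estimates from \autoref{thm:kappa} should provide the needed bounds.
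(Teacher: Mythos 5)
Your proposal is correct in substance, and its decisive step coincides with the paper's proof: the paper also proceeds by limiting absorption, identifying $\hat G_\epsilon = (2\pi)^{-d/2}(|\by|^2-k_0^2-\i\epsilon)^{-1}$ for the damped operator, evaluating the $k_d$-integral explicitly (which gives your $H_\epsilon$ with $\kappa_\epsilon$ the root of $\kappa^2+\i\epsilon$ with positive imaginary part), and passing to the limit $\epsilon\to0^+$ on both sides by dominated convergence, with the domination $|\kappa_\epsilon|>|\kappa|$ and the local integrability of $1/\kappa$ from \autoref{thm:kappa} doing exactly the work you anticipate. The difference is that your Steps 1--2 (verifying $H\in L^1_{\mathrm{loc}}\cap\mathcal{S}'$ and the distributional identity $-(\partial_{r_d}^2+\kappa^2)H=(2\pi)^{-(d-1)/2}\delta(r_d)$) end up being scaffolding that your Step 3 renders unnecessary: once you have $\tilde\ktran G_\varepsilon = H_\varepsilon$ and convergence of both sides in $\mathcal{S}'(\R^d)$, the identity $\tilde\ktran G = H$ follows directly, with no need to introduce $v$ or argue about the homogeneous equation. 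Your observation that temperedness alone cannot force $v=0$ (both $\e^{\pm\i\kappa r_d}$ are tempered in the propagating region) is correct and is precisely why the outgoing condition must enter through the sign of $\operatorname{Im}\kappa_\varepsilon$; but since the regularized computation already pins down $\tilde\ktran G$ completely, the PDE-plus-uniqueness framing can be dropped. If you keep it, note that the verification of the jump relation for $\partial_{r_d}^2 H$ across $\{r_d=0\}$ requires a Fubini argument to separate the $r_d$-integration from the (locally integrable but singular) factor $1/\kappa(\tilde\br)$, which is more work than the direct route.
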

\begin{proof}
The $d$-dimensional Fourier transform $\hat G$ is a tempered distribution and can be expressed as 
\begin{equation} \label{eq:hat-G}
	\langle \hat G, \phi \rangle = (2\pi)^{-\frac{d}{2}} \lim_{\epsilon \to 0^+} \int_{\R^d} \frac{\phi(\bk)}{|\bk|^2 - k_0^2 -\i\epsilon} \dd \bk, \quad \phi \in \mathcal{S}(\R^d).
\end{equation}
This formula can be derived as follows. Given $\epsilon>0$, consider the Helmholtz operator $-\Delta-(k_0^2+\i\epsilon)$. An outgoing fundamental solution of this operator is given by
\begin{equation*}
G_\epsilon(\br) = \frac{\i}{4} \left( \frac{k}{2 \pi |\br|} \right)^\frac{d-2}{2} H_{\frac{d-2}{2}}^{(1)} (k\abs{\br})
\end{equation*}
where $k$ denotes the principal square root of $k_0^2+\i\epsilon$, see \cite[Chap.\ 9]{McL00}. Since $G_\epsilon$ is a fundamental solution of $-\Delta-(k_0^2+\i\epsilon)$ and a tempered distribution, its Fourier transform may be identified with the locally integrable function $\hat G_\epsilon(\by)=(2\pi)^{-d/2}(|\by|^2-(k_0^2+\i\epsilon))^{-1}$. Further, by using the asymptotic estimates \eqref{eq:Hankel-limit-small}-\eqref{eq:Hankel-limit-large}, see also \cite[(13)]{EveWet15}, and Lebesgue's dominated convergence theorem it can be shown that $G_\epsilon\overset{\mathcal{S}^\prime}{\rightarrow} G$ and therefore also $\hat{G_\epsilon}\overset{\mathcal{S}^\prime}{\rightarrow}\hat{G}$, which is \eqref{eq:hat-G}.

Exploiting the fact that $\tilde \ktran = \ktran_d^{-1} \ktran$ we obtain
\begin{align*}
	\langle \tilde \ktran G, \phi \rangle
		&= (2\pi)^{-\frac{d}{2}} \lim_{\epsilon \to 0^+} \int_{\R^d} \frac{ \ktran_d^{-1} \phi(\bk)}{|\bk|^2 - k_0^2 -\i \epsilon} \dd \bk 
		= (2\pi)^{-\frac{d+1}{2}} \lim_{\epsilon \to 0^+} \int_{\R^d} \int_\R \frac{ \e^{\i r_d k_d} \phi(\tilde \bk,r_d)}{|\bk|^2 - k_0^2 -\i \epsilon} \dd r_d \dd \bk. 
\end{align*}
Applying Fubini's theorem to interchange integration with respect to $k_d$ and $r_d$ we obtain
\begin{equation*}
	\langle \tilde \ktran G, \phi \rangle = (2\pi)^{-\frac{d+1}{2}} \lim_{\epsilon \to 0^+} \int_{\R^d} \phi(\tilde \bk,r_d) \int_\R \frac{ \e^{\i r_d k_d}}{|\bk|^2 - k_0^2 -\i \epsilon} \dd k_d \dd(\tilde \bk, r_d).
\end{equation*}
Concerning the inner integral define $\kappa_\epsilon$ as the square root of $\kappa^2 + \i \epsilon$ with positive imaginary part and use formula 17.23.14 in \cite{GrRy07} to obtain
\begin{equation*}
	\int_\R \frac{ \e^{\i r_d k_d}}{|\bk|^2 - k_0^2 -\i \epsilon} \dd k_d = \int_\R \frac{ \e^{\i r_d k_d}}{k_d^2 - \kappa(\tilde \bk)^2 -\i \epsilon} \dd k_d = \int_\R \frac{ \e^{\i r_d k_d}}{k_d^2 + (-\i\kappa_\epsilon(\tilde \bk))^2} \dd k_d = \frac{\pi \e^{\i \kappa_\epsilon(\tilde \bk) |r_d|} }{-\i \kappa_\epsilon(\tilde \bk)}.
\end{equation*}
Thus, we have shown that
\begin{align}\label{eq:domcon}
	\langle \tilde \ktran G,\phi\rangle = (2\pi)^{\frac{1-d}{2}} \lim_{\epsilon \to 0^+} \int_{\R^d} \phi(\tilde \bk, r_d) \frac{\i \e^{\i \kappa_\epsilon(\tilde \bk) |r_d|} }{2 \kappa_\epsilon(\tilde \bk)} \dd(\tilde \bk, r_d).
\end{align}
Finally, we can interchange limit and integral by Lebesgue's dominated convergence theorem, since $|\kappa_\epsilon| > |\kappa|$ and the integrand in \eqref{eq:domcon} is dominated by $|\phi|/|2\kappa|$, which is in $L^1(\R^d)$ according to \autoref{thm:kappa}. Therefore,
\begin{align*}
	\langle \tilde \ktran G,\phi\rangle = (2\pi)^{\frac{1-d}{2}} \int_{\R^d} \phi(\tilde \bk, r_d) \frac{\i \e^{\i \kappa(\tilde \bk) |r_d|} }{2 \kappa(\tilde \bk)} \dd(\tilde \bk, r_d).
\end{align*}
\end{proof}
Having found $\tilde \ktran G$ we calculate $\tilde \ktran u$ in \autoref{thm:fourierdiffraction}. For $d \ge 2$ define
\begin{equation} \label{eq:h}
	\bh^\pm \colon \R^{d-1} \to \C^d, \, \bh^\pm\left(\tilde \br \right) \coloneqq \begin{pmatrix} \tilde \br \\ \pm \kappa(\tilde \br) \end{pmatrix}.
\end{equation}
We also introduce the half space
\begin{align*}
	E_a \coloneqq \{\by \in \R^d : y_d \ge a\}, \quad a \in \R,
\end{align*}
and the indicator function $\mathbf{1}_A$ of a set $A$, which is defined by $\mathbf{1}_A(x) \coloneqq 1$ if $x\in A$ and $\mathbf{1}_A(x) \coloneqq 0$ otherwise.
Regarding the right-hand side of \eqref{eq:F12u} below, we recall that the Fourier transform of a function with compact support in $\R^d$ can be extended to an entire function on $\C^d.$

\begin{theorem}[Generalized Fourier Diffraction Theorem]\label{thm:fourierdiffraction}
Let $d\ge 2$. Assume that $g \in L^1(\R^d)$ has compact support.
Then $\tilde \ktran u$, where  $u = g*G$, is given by the following locally integrable function
\begin{equation} \label{eq:F12u}
	\tilde \ktran u(\br) = \sqrt{\frac{\pi}{2}} \frac{\i}{\kappa(\tilde \br)} \left(\e^{\i \kappa(\tilde \br) r_d} \ktran \big((1-\mathbf{1}_{E_{r_d}})g \big)(\bh^+(\tilde \br)) + \e^{-\i \kappa(\tilde \br) r_d} \ktran \left(\mathbf{1}_{E_{r_d}}g \right)(\bh^-(\tilde \br)) \right),
\end{equation}
for $\br\in\R^d$ with $\abs{\tilde\br} \neq k_0$.
If $r_d$ is sufficiently large or sufficiently small such that
\begin{equation}\label{eq:largexd}
  \pm (r_d - y_d) > 0\quad \text{for all } \by\in \supp g,
\end{equation}
then \eqref{eq:F12u} simplifies to
\begin{equation}\label{eq:F12u-outisde}
  \tilde \ktran u (\br) = \sqrt{\frac\pi2}\, \frac{ \i \e^{\pm\i \kappa(\tilde \br) r_d}}{\kappa(\tilde \br)} \hat{g}(\bh^\pm (\tilde \br)).
\end{equation}
\end{theorem}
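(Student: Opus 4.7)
The plan is to combine the explicit expression for $\tilde{\mathcal F} G$ from \autoref{thm:G1} with the convolution identity $u=g*G$. Formally, the partial Fourier transform in the first $d-1$ coordinates turns the $d$-dimensional convolution into multiplication in the $\tilde\br$-variable together with a one-dimensional convolution in the remaining coordinate. Substituting \autoref{thm:G1} then gives, at least heuristically,
\[
  \tilde{\mathcal F} u(\tilde\br,r_d)=\frac{\i}{2\kappa(\tilde\br)}\int_{\R}\tilde{\mathcal F}g(\tilde\br,y_d)\,\e^{\i\kappa(\tilde\br)|r_d-y_d|}\dd y_d.
\]
Splitting this integral at $y_d=r_d$ replaces $|r_d-y_d|$ by $\pm(r_d-y_d)$ on the two pieces, allowing me to factor out $\e^{\pm\i\kappa r_d}$ and to recognise the remaining one-dimensional integrals (paired with the already performed $\tilde{\mathcal F}g$) as the full $d$-dimensional Fourier transform of $(1-\chi_{r_d})g$ and $\chi_{r_d}g$ evaluated at $\bh^+(\tilde\br)$ and $\bh^-(\tilde\br)$ respectively. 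Collecting the normalising constants $(2\pi)^{(1-d)/2}\cdot(2\pi)^{1/2}/2=\sqrt{\pi/2}\cdot\tfrac{1}{1}$ reproduces exactly the prefactor in \eqref{eq:F12u}.

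Making this rigorous is the core work, since $G\notin L^1(\R^d)$ and $1/\kappa$ is singular on $|\tilde\br|=k_0$, so naive Fubini is not available. I would proceed by approximation: choose $g_n\in\mathcal D(\R^d)$ with $\supp g_n$ contained in a fixed ball and $g_n\to g$ in $L^1(\R^d)$. For such smooth, compactly supported $g_n$ every exchange of integrals in the computation above is legal, so the asserted formula holds for each $u_n\coloneqq g_n*G$. By \autoref{thm:Rcontinuous}, $u_n\to u$ in $\mathcal S'(\R^d)$ and hence $\tilde{\mathcal F}u_n\to\tilde{\mathcal F}u$ in $\mathcal S'$. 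On the right-hand side, the Paley--Wiener theorem guarantees that $\mathcal F((1-\chi_{r_d})g_n)$ and $\mathcal F(\chi_{r_d}g_n)$ extend to entire functions on $\C^d$ of exponential type and converge, uniformly on compact subsets of $\C^d$, to the corresponding entire continuations for $g$. Combined with \autoref{thm:kappa} for $1/\kappa$ and local boundedness of $\e^{\pm\i\kappa r_d}$ away from $|\tilde\br|=k_0$, this yields convergence of the right-hand side as a locally integrable function, thereby identifying both sides almost everywhere on $\{|\tilde\br|\neq k_0\}$.

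The simplified statement \eqref{eq:F12u-outisde} then falls out immediately: as soon as $r_d$ lies above $\sup\{y_d:\by\in\supp g\}$, the truncation $\chi_{r_d}g$ vanishes and $(1-\chi_{r_d})g=g$, collapsing \eqref{eq:F12u} to the $\bh^+$-term only, and symmetrically for $r_d$ sufficiently small. The main obstacle I expect is the careful handling of the complex argument $\bh^\pm(\tilde\br)$ in the evanescent regime $|\tilde\br|>k_0$, where $\kappa$ is purely imaginary with positive imaginary part and the ordinary Fourier integral $\int g(\by)\e^{-\i\bh^\pm\cdot\by}\dd\by$ involves the exponentially growing factor $\e^{\mp\kappa(\tilde\br)y_d}$. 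Convergence is recovered here only because $\supp g$ is bounded in $y_d$, and the rigorous justification of the approximation argument must ensure that these Paley--Wiener extensions survive the passage to the limit in a distributional pairing that carries the singular weight $1/\kappa$.
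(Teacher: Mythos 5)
Your proposal is correct and follows essentially the same route as the paper's proof: the formal identity $\tilde\ktran u = \frac{\i}{2\kappa}\int_\R \tilde\ktran g(\tilde\br,y)\e^{\i\kappa|r_d-y|}\dd y$ obtained from \autoref{thm:G1} and the convolution theorem, the splitting at $y=r_d$, and the rigorous justification via approximation by $g_n\in\mathcal D(\R^d)$ with uniformly bounded supports, using \autoref{thm:Rcontinuous} for the left-hand side and dominated convergence (with the integrable majorant $C\abs{\phi}/\abs{2\kappa}$ supplied by \autoref{thm:kappa}) for the right-hand side. The only cosmetic difference is that you invoke Paley--Wiener and uniform convergence on compacts of the entire continuations where the paper simply cites pointwise convergence of $\ktran(\chi_{r_d}g_n)$ on $\C^d$; both are valid.
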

\begin{proof}
There is a compact set $K\subset\R^d$ and a sequence $(g_n) \subset \mathcal{D}(\R^d)$ converging to $g$ in $L^1(\R^d)$ such that $K$ contains the supports of $g$ and all $g_n$. Define $u_n  = g_n * G.$ Then, in the sense of tempered distributions we have
\begin{align*}
	\tilde \ktran u_n = \ktran_d^{-1} \ktran (g_n*G) = (2\pi)^{\frac{d}{2}} \ktran_d^{-1} (\hat g_n \hat G) = (2\pi)^{\frac{d-1}{2}} \tilde \ktran g_n \stackrel{d}{*} \tilde \ktran G,
\end{align*}
the last equality being a consequence of the convolution theorem for partial Fourier transforms and $\stackrel{d}{*}$ denoting partial convolution along the $d$-th coordinate, see \cite[Def.~8.21, Thm.~8.22]{KirQueRitSchSet21}. Taking into account \autoref{thm:G1}, it follows that
\begin{equation*}
	\tilde \ktran u_n(\br)=\int_{\R} \frac{\i \e^{\i \kappa(\tilde \br) |r_d-y|}}{2 \kappa(\tilde \br)} \tilde \ktran g_n(\tilde \br, y) \dd y
\end{equation*}
wherever $\kappa \neq 0$. So, for every $\phi \in \mathcal{S}(\R^d)$
\begin{gather}
	\langle \tilde \ktran u_n, \phi \rangle
		= \frac{\i}{2}\int_{\R^d} \frac{\phi (\br)}{\kappa(\tilde \br)}  \int_{\R} \e^{\i \kappa(\tilde \br) |r_d-y|} \tilde \ktran g_n(\tilde \br, y) \dd y \dd \br.
		\label{eq:Fu_n1}
\end{gather}
Next, we rewrite the inner integral as a sum of two $d$-dimensional Fourier transforms.
\begin{align*}
	&\int_{\R} \e^{\i \kappa(\tilde \br) |r_d-y|} \tilde \ktran g_n(\tilde \br, y) \dd y \\
		&\qquad = \int_{-\infty}^{r_d} \e^{\i \kappa(\tilde \br) (r_d-y)} \tilde \ktran g_n(\tilde \br, y) \dd y
			+ \int^{+\infty}_{r_d} \e^{\i \kappa(\tilde \br) (y-r_d)} \tilde \ktran g_n(\tilde \br, y) \dd y \\
		&\qquad = \e^{\i \kappa(\tilde \br)r_d} \int_{\R} \mathbf{1}_{(-\infty,r_d)}(y) \e^{-\i \kappa(\tilde \br)y} \tilde \ktran g_n(\tilde \br, y) \dd y
			+ \e^{-\i \kappa(\tilde \br)r_d} \int_{\R} \mathbf{1}_{(r_d,\infty)}(y) \e^{\i \kappa(\tilde \br)y} \tilde \ktran g_n(\tilde \br, y) \dd y \\
		&\qquad = \frac{\e^{\i \kappa(\tilde \br)r_d}}{(2\pi)^{\frac{d-1}{2}}} \int_{\R^d} \mathbf{1}_{(-\infty,r_d)}(y) g_n(\tilde \bx, y) \e^{-\i (\tilde \bx \cdot \tilde \br + \kappa(\tilde \br)y)}  \dd (\tilde \bx, y) \\
		&\qquad \quad + \frac{\e^{-\i \kappa(\tilde \br)r_d}}{(2\pi)^{\frac{d-1}{2}}} \int_{\R^d} \mathbf{1}_{(r_d,\infty)}(y) g_n(\tilde \bx, y) \e^{-\i (\tilde \bx \cdot \tilde \br - \kappa(\tilde \br)y)}  \dd (\tilde \bx, y) \\
		&\qquad = (2\pi)^{\frac12} \left( \e^{\i \kappa(\tilde \br)r_d} \ktran \big((1-\mathbf{1}_{E_{r_d}})g_n\big)(\bh^+(\tilde \br)) + \e^{-\i \kappa(\tilde \br)r_d} \ktran \big(\mathbf{1}_{E_{r_d}}g_n\big)(\bh^-(\tilde \br)) \right)
\end{align*}
Therefore \eqref{eq:Fu_n1} equals
\begin{gather}
	\langle \tilde \ktran u_n, \phi \rangle
		= \i \sqrt{\frac{\pi}{2}} \int_{\R^d} \frac{\phi(\br)}{\kappa(\tilde \br)} \left(\e^{\i \kappa(\tilde \br) r_d} \ktran \big((1-\mathbf{1}_{E_{r_d}})g_n \big)(\bh^+(\tilde \br)) 
		+ \e^{-\i \kappa(\tilde \br) r_d} \ktran \big(\mathbf{1}_{E_{r_d}}g_n \big)(\bh^-(\tilde \br)) \right) \dd \br.
		\label{eq:Fu_n2}
\end{gather}
Now consider the limit $n \to \infty$ in \eqref{eq:Fu_n2}. Regarding the left-hand side, we have $u_n\to u$ in $\mathcal{S}^\prime(\R^d)$ by \autoref{thm:Rcontinuous}, since  $\bigcup_n \supp g_n\subset K$ is bounded. Continuity of $\tilde \ktran$ on $\mathcal{S}^\prime(\R^d)$ gives
\begin{equation*}
	\lim_{n\to\infty}\langle \tilde \ktran u_n, \phi \rangle = \langle \tilde \ktran u, \phi \rangle,\quad \text{for all } \phi\in \mathcal{S}(\R^d).
\end{equation*}
To resolve the limit on the right-hand side we use the dominated convergence theorem. The pointwise limit of the integrand is given by
\begin{equation}\label{eq:pointwise}
	\frac{\phi(\br)}{\kappa(\tilde \br)} \left(\e^{\i \kappa(\tilde \br) r_d} \ktran \big((1-\mathbf{1}_{E_{r_d}})g \big)(\bh^+(\tilde \br)) 
		+ \e^{-\i \kappa(\tilde \br) r_d} \ktran \big(\mathbf{1}_{E_{r_d}}g \big)(\bh^-(\tilde \br)) \right).
\end{equation}
To see this, we note that $g_n \to g$ in $L^1(K)$ and also $\mathbf{1}_{E_{r_d}}g_n \to \mathbf{1}_{E_{r_d}}g$ in $L^1(K)$ for all $r_d \in \R$. Consequently, $\ktran (\mathbf{1}_{E_{r_d}}g_n) \to \ktran (\mathbf{1}_{E_{r_d}}g)$ pointwise on $\C^d$, cf.\ \cite[(7.3.1)]{Ho90}. Hence $\ktran (\mathbf{1}_{E_{r_d}}g_n)(\bh^-(\tilde \br)) \to \ktran (\mathbf{1}_{E_{r_d}}g)(\bh^-(\tilde \br))$ for all $\br \in \R^d.$ Analogously, we find that $\ktran ((1-\mathbf{1}_{E_{r_d}})g_n)(\bh^+(\tilde \br)) \to \ktran ((1-\mathbf{1}_{E_{r_d}})g)(\bh^+(\tilde \br))$ for all $\br \in \R^d.$

Next, it follows from \eqref{eq:Fu_n1} that the integrand is bounded by $\sqrt{2\pi} \norm{g_n}_{L^1} \abs{\phi/2\kappa}$. Since we can find a $C$ such that $\norm{g_n}_{L^1} \le C \norm{g}_{L^1}$ for all $n$ we have found an upper bound.

Finally, it follows from the dominated convergence theorem that \eqref{eq:pointwise} is in $L^1(\R^d)$ and
\begin{align*}
	\langle \tilde \ktran u, \phi \rangle
		= \i \sqrt{\frac{\pi}{2}} \int_{\R^d} \frac{\phi(\br)}{\kappa(\tilde \br)} \left(\e^{\i \kappa(\tilde \br) r_d} \ktran \big((1-\mathbf{1}_{E_{r_d}})g \big)(\bh^+(\tilde \br)) + \e^{-\i \kappa(\tilde \br) r_d} \ktran \big(\mathbf{1}_{E_{r_d}}g \big)(\bh^-(\tilde \br)) \right) \dd \br
\end{align*}
for all $\phi \in \mathcal{S}(\R^d)$, which proves \eqref{eq:F12u}. Furthermore, if \eqref{eq:largexd} is fulfilled, one of the two Fourier transforms on the right-hand side of \eqref{eq:F12u} vanishes, while the other one equals $\hat g$, so that we obtain \eqref{eq:F12u-outisde}, which finishes the proof.
\end{proof}

\begin{remark}[1D Fourier diffraction theorem]
\autoref{thm:fourierdiffraction} can be extended to dimension $d=1$ in the following way. Let $g\in L^1(\R)$ be compactly supported and recall from \eqref{eq:G123} the simple expression of the one-dimensional fundamental solution. Then a direct calculation yields
\begin{align*}
	u(x) &= \frac{\i}{2k_0} \int_\R g(y) \e^{\i k_0 \abs{x-y}} \dd y 
		= \sqrt{\frac{\pi}{2}} \frac{\i}{k_0} \left(\e^{\i k_0 x} \ktran \left((1-\mathbf{1}_{E_{x}})g \right)(k_0) + \e^{-\i k_0 x} \ktran \left(\mathbf{1}_{E_{x}}g \right)(-k_0) \right).
\end{align*}
If $x$ lies outside the support of $g$, that is, $\pm (x-y) > 0$ for all $y\in \supp g$, then
\begin{equation*}
	u(x) =  \sqrt{\frac{\pi}{2}} \frac{\i}{k_0} \e^{\pm \i k_0 x} \hat g(\pm k_0).
\end{equation*}
\end{remark}

\begin{remark}
Since $g$ has compact support, $\tilde \ktran u$ is smooth wherever $|\tilde \br| \neq k_0$ according to \eqref{eq:F12u}. 
On the other hand, consider $\tilde  \br_0 \in \R^{d-1}$ with $\abs{\tilde \br_0} = k_0$ and note that $\ktran ((1-\mathbf{1}_{E_{r_d}})g)\circ \bh^+$ and $\ktran (\mathbf{1}_{E_{r_d}}g)\circ \bh^-$ are continuous functions on $\R^{d-1}$. Evaluating the long bracket in \eqref{eq:F12u} at $(\tilde  \br_0,r_d)$ gives $\hat g (\tilde \br_0,0)$ for every $r_d\in \R.$
Thus, the function $\tilde \ktran u$ has a singularity at $(\tilde \br_0,r_d)$, for every $r_d \in \R$, if $\hat g (\tilde \br_0,0) \neq 0$.
\end{remark}

\section{Fourier coverage} \label{sec:coverage}
In this section we investigate some of the ramifications of \autoref{thm:fourierdiffraction} for data collection strategies in diffraction tomography. Therefore, we return to the inverse scattering problem outlined in \autoref{sec:exp}. Under the Born or Rytov approximation, cf.\ \eqref{eq:hhborn} and \eqref{eq:hhrytov}, the governing equation is
\begin{equation}\label{eq:bornrytov}
	-(\Delta + k_0^2)u(\br) = k_0^2 f(\br)\ui(\br), \quad \br \in \R^d,
\end{equation}
where $\ui$ is the incident wave and the outgoing solution $u$ approximates the scattered wave. The normalized scattering potential $f$, recall \eqref{eq:f}, is the unknown we aim to reconstruct. From now on we impose the following assumptions, which are standard in diffraction tomography.
\begin{enumerate}
	\item \label{item:planewave} The incident field is a plane wave $\ui(\br) = \e^{\i k_0 \bs \cdot \br}$ for some $\bs \in \mathbb{S}^{d-1}$.
	\item \label{item:outside} The measurement hyperplane $\{\br \in \R^d : r_d = \rM \}$ is disjoint from $\supp f$, i.e.\ condition \eqref{eq:largexd} holds. Introducing the intervals
		\begin{align}\label{eq:I(f)}
			I^\pm = I^\pm(f) = \{x \in \R : x \gtrless y_d \text{ for all } \by \in \supp f \},
		\end{align}
		this condition can be written as $\rM \in I^\pm$.
\end{enumerate}
\begin{figure}
  \begin{center}
    \begin{tikzpicture}[scale=0.43,
      >=stealth',
      pos=.8,
      photon/.style={decorate,decoration={snake,post length=1mm}}
      ]
      
      \draw[->] (2,0)--(11,0)  ;
      \draw[->] (2,0)--(2,5)  ;
      \draw[->] (2,0)--(-0.5,-2.5)  ;
      \node at (11.6,0) {$r_3$};
      \node at (2.8,4.8) {$r_1$};
      \node at (-1,-3) {$r_2$};
      
      \fill[blue,opacity=0.3] (2,0) circle (2.4);
      \node[blue] at (3.5,-2.9) {$\supp f$};
      
      \draw[->,red] (4.5,0)--(11,0)  ;
      \node[red] at (6.5,0.55) {$I^+(f)$};
      \node[red] at (4.5,0) {$\big($};
      
      \draw[dashed] (-2.875,-3) -- (-2,3.625);
      \draw[dashed]  (-3.875,-2.875) -- (-3,3.75);
      \draw[dashed]  (-4.875,-2.75) -- (-4,3.875);
      \draw[dashed] (-5.875,-2.625) -- (-5,4);
      \draw[->] (-8,1) -- (-6,0.75);
      \node at (-6.75,1.5) {$\bs$};	
      \node at (-2.5,4.5) {$\ui$};	
      \draw[decorate,decoration={brace,amplitude=4pt,mirror,raise=0ex}] (-5.875,-2.625) -- (-4.875,-2.75) node[midway,yshift=-1em]{$2\pi/k_0$};
      
      \draw[line width = 3pt] (9,-4) -- (9,4);
      \node at (9,4.5) {$r_3 = r_M$};
    \end{tikzpicture}
  \end{center}
  \caption{Measurement setup for $d=3$ with the open interval $I^+(f)$ marked in red along the $r_3$ axis. The incident field has direction $\bs$ and wavelength $2\pi/k_0$.}
  \label{fig:setup}
\end{figure}
\autoref{fig:setup} illustrates the measurement setup.
The following $d$-dimensional version of the Fourier diffraction theorem, see also \cite{KakSla01,NatWue01,Wol69}, is now an immediate consequence of \autoref{thm:fourierdiffraction}.
Recall the partial Fourier transform $\tilde\ktran$ in \eqref{eq:Ftilde}, $ \kappa$ in \eqref{eq:kappa} and $\bh$ in \eqref{eq:h}.
\begin{corollary}[Fourier diffraction theorem]\label{thm:fdt}
Let $d\ge 2$ and assume that $f \in L^1(\R^d)$ has compact support. Then, for $\bx \in \R^{d-1}$ with $\abs{\bx} \neq k_0$, the outgoing solution $u$ of \eqref{eq:bornrytov} satisfies
\begin{equation} \label{eq:recon}
  \tilde \ktran u(\bx,\rM) =
  \sqrt{\frac\pi2}\, \frac{ \i \e^{\pm\i \kappa(\bx) \rM} k_0^2}{\kappa(\bx)} \hat f(\bh^{\pm}(\bx) - k_0 \bs),\qquad  \text{if } \rM \in I^\pm(f).
\end{equation}
\end{corollary}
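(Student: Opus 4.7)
The proof is a direct specialization of \autoref{thm:fourierdiffraction}. The plan is to set $g(\br) = k_0^2 f(\br) \ui(\br) = k_0^2 f(\br) \e^{\i k_0 \bs \cdot \br}$ in the governing equation \eqref{eq:bornrytov}, verify that the hypotheses of \autoref{thm:fourierdiffraction} hold for this $g$, and then invoke formula \eqref{eq:F12u-outisde}. No real obstacle is anticipated; the content is essentially a Fourier modulation identity.

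First I would check the integrability and support hypothesis. Since $\abs{\ui(\br)} = 1$ for every $\br \in \R^d$ and $f \in L^1(\R^d)$, the product $g$ also lies in $L^1(\R^d)$, and $\supp g \subseteq \supp f$ is compact. Next, the assumption $\rM \in I^\pm(f)$ is, by the definition \eqref{eq:I(f)}, precisely the condition $\pm(\rM - y_d) > 0$ for all $\by \in \supp f$, which is exactly condition \eqref{eq:largexd} of \autoref{thm:fourierdiffraction} at $r_d = \rM$.

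With these hypotheses in place, \eqref{eq:F12u-outisde} applied at $\br = (\bx, \rM)$ yields
\begin{equation*}
  \tilde \ktran u(\bx, \rM) = \sqrt{\frac{\pi}{2}}\, \frac{\i \e^{\pm \i \kappa(\bx) \rM}}{\kappa(\bx)}\, \hat{g}\bigl(\bh^\pm(\bx)\bigr).
\end{equation*}
It then remains only to compute $\hat g$. Since $g = k_0^2\, \e^{\i k_0 \bs \cdot (\cdot)}\, f$ is a modulation of $f$, the standard Fourier shift identity gives $\hat g(\bk) = k_0^2\, \hat f(\bk - k_0 \bs)$ for $\bk \in \R^d$. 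Substituting $\bk = \bh^\pm(\bx)$ produces exactly \eqref{eq:recon}.

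The one point deserving a word of care is that $\bh^\pm(\bx) \in \C^d \setminus \R^d$ whenever $\abs{\bx} > k_0$, so one must read $\hat f$ as its entire-function extension to $\C^d$. This is the same convention already used in the statement of \autoref{thm:fourierdiffraction}, and the modulation identity extends automatically: both sides of $\hat g(\bk) = k_0^2\, \hat f(\bk - k_0 \bs)$ are entire functions of $\bk$ that coincide on $\R^d$, hence agree on $\C^d$ by the identity theorem.
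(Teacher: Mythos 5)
Your proposal is correct and follows exactly the route the paper intends: the paper states \autoref{thm:fdt} as an ``immediate consequence'' of \autoref{thm:fourierdiffraction}, obtained by substituting $g = k_0^2 f\,\ui$ into \eqref{eq:F12u-outisde} and applying the Fourier modulation identity, which is precisely your argument. Your additional remark on the entire extension of $\hat f$ for $\abs{\bx} > k_0$ is consistent with the paper's stated convention and fills in the one detail the paper leaves implicit.
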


\begin{remark}[Variants of the Fourier diffraction theorem] \label{rem:applicability}
In the present article, the main application of \autoref{thm:fourierdiffraction} is the special case \autoref{thm:fdt} based on assumptions \ref{item:planewave} and \ref{item:outside}. On the other hand, it is precisely the absence of these assumptions which makes \autoref{thm:fourierdiffraction} more general and potentially more widely applicable. That is, \autoref{thm:fourierdiffraction} could be used in situations, where the data are collected on a hyperplane passing through the inhomogeneity or where the incident field $\ui$ is not a plane wave. Consider, for instance, an incident Herglotz wave
\begin{equation*}
	\ui(\br) = \int_{\mathbb{S}^{d-1}} a(\bs) \e^{\i k_0\bs\cdot \br} \dd s(\bs),
\end{equation*}
where $a \in L^2(\mathbb{S}^{d-1})$. Replacing $g$ in \eqref{eq:F12u-outisde} with $k_0^2 f \ui$ and changing the order of integration yields
\begin{equation*}
  \tilde \ktran u(\bx,\rM) =
  \sqrt{\frac\pi2}\, \frac{ \i \e^{\pm\i \kappa(\bx) \rM} k_0^2}{\kappa(\bx)} \int_{\mathbb{S}^{d-1}}  a(\bs) \hat f(\bh^{\pm}(\bx) - k_0 \bs) \dd s(\bs).
\end{equation*}
Such relations between $\tilde \ktran u$ and $\hat f$ have recently been used for tomographic reconstructions in \cite{KirNauSchYan24}. Moreover, we note that the applicability of \autoref{thm:fourierdiffraction} is not restricted to outgoing solutions: Suppose $w$ is an arbitrary solution of \eqref{eq:HH}. Then $w = u+v$, where $u$ is the outgoing solution and $\Delta v + k_0^2 v = 0$ on $\R^d$. If $\tilde \ktran v$ can be calculated, then $\tilde \ktran w = \tilde \ktran u + \tilde \ktran v$ and, using \autoref{thm:fourierdiffraction}, one obtains a formula for $\tilde \ktran w$.
Finally, we remark that there are vector-valued versions of the Fourier diffraction theorem, see \cite{Lau02,MejSch24}.
\end{remark}

So far all parameters of the experiment were kept fixed.
In that case the Fourier diffraction theorem in \eqref{eq:recon} gives information about $\mathcal{F}f$ on the hemisphere 
\begin{equation} \label{eq:hemisphere}
	\{\bh^\pm(\bx) - k_0\bs : \bx\in\R^{d-1},\, \abs{\bx}<k_0\}
\end{equation}
with center $-k_0\bs$ and radius $k_0$.
This is called the \emph{Fourier coverage} or \emph{k-space coverage} of the experiment and we denote it by $\mathcal{Y}\subset\R^d$.
The restriction $\abs{\bx}<k_0$ is made for the practical reason that the larger spatial frequencies do not contribute to the physical measurements.

The set in \eqref{eq:hemisphere}, however, is only a null set.
For a viable reconstruction, we need to obtain more information, namely a larger Fourier coverage, by adapting the experimental setup.
In this section, we discuss how altering 
\begin{enumerate}\vspace{-\topsep-4pt} \setlength{\parskip}{0pt}
	\item the direction of incidence $\bs$,
	\item the orientation and position of the object,
	\item the orientation and position of the measurement hyperplane, or
	\item the wave number $k_0$
\end{enumerate}\vspace{-\topsep-4pt}
affects the Fourier coverage $\mathcal{Y}$. Regarding the first three constituents, the decisive factor is their orientation \emph{relative to each other}. A change in one of them is equivalent to a corresponding change in the other two. For instance, measurements obtained from rotating the object during illumination can be reproduced, at least theoretically, by rotating the direction of incidence and the measurement equipment in a corresponding fashion. Altering the wave number $k_0$ is different in character and will be treated in \autoref{sec:cover-frequencies}.
Finally, as a preparation for the general filtered backpropagation presented in \autoref{sec:backprop}, we consider in \autoref{sec:coverage-general} an experiment where all the above constituents may vary simultaneously.

\subsection{Direction of incidence}
\label{sec:incidence}
Altering the incidence direction $\bs$ is known as \emph{angle scanning} \cite{LeeShiYaqSoPar19} or illumination scanning \cite{ParShiPar18}. Then instead of \eqref{eq:recon}, we obtain for $\bx\in\R^{d-1}$
\begin{equation} \label{eq:recon_n2}
\tilde \ktran u_t(\bx,\rM)= \sqrt{\frac\pi2} \frac{ \i \e^{\pm \i \kappa \rM} k_0^2}{\kappa} \ktran f \left(\bh^\pm -k_0 \bs(t) \right), \qquad \text{if } \rM \in I^\pm,
\end{equation}
where $u_t$, $0\le t \le L$, is the scattered wave generated by the incident plane wave $\br\mapsto\e^{\i k_0 \br\cdot \bs(t)}$ and $\bs\colon[0,L] \to \mathbb{S}^{d-1}$ is the varying direction of incidence. Thus the Fourier coverage is given by
\begin{equation*}
	\mathcal{Y} = \left\{ \bh^\pm(\bx) -k_0 \bs(t) \in \R^d : |\bx| < k_0, \, 0\le t \le L \right\}.
\end{equation*}
Geometrically speaking, it consists of translations of the semicircle or hemisphere \eqref{eq:hemisphere} such that its center stays at a distance of $k_0$ from the origin, see \autoref{fig:rot_wave-2d}.
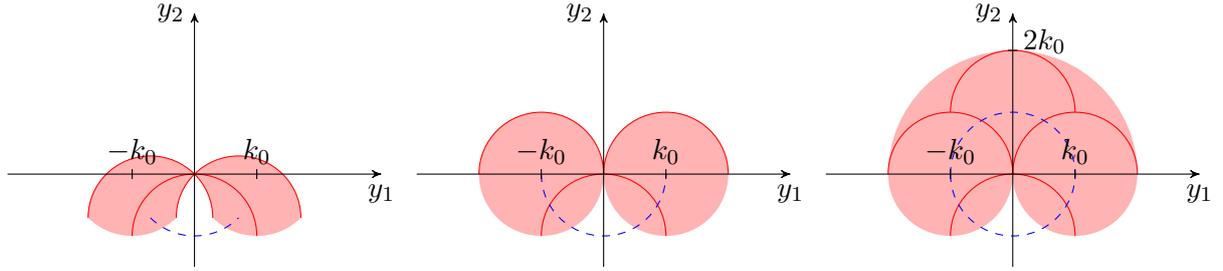
\begin{figure}
\centering

\begin{tikzpicture}[scale=.82] 
  \fill[red,opacity=.3] (1.707,-.707) arc (0:135:1cm) -- (0,0) arc(45:0:1cm); 
  \fill[red,opacity=.3] (0.293,-.707) arc (225:315:1cm) ; 
  \fill[red,opacity=.3] (-1.707,-.707) arc (180:45:1cm) -- (0,0) arc(135:180:1cm); 
  \fill[red,opacity=.3] (-0.293,-.707) arc (315:225:1cm); 
  
  \draw[red] (.293,-.707) arc (0:180:1cm);
  \draw[red] (1.707,-.707) arc (0:180:1cm);
  \draw[red] (1,-1) arc (0:180:1cm);
  \draw[blue,dashed] (-.707,-.707) arc (225:315:1cm);
  
  \draw[->] (0,-1.5) -- (0,2.6) node[anchor = east]{$y_2$};
  \draw[->] (-3,0) -- (3,0) node[anchor = north]{$y_1$};
  
  \draw (1,-0.08) -- (1,0.08);
  \node[anchor = south] at (1,0){$k_0$};
  \draw (-1,-0.08) -- (-1,0.08);
  \node[anchor = south] at (-1,0){$-k_0$};
\end{tikzpicture}\hfill
\begin{tikzpicture}[scale=.82]
    \fill[red,opacity=.3] (2,0) arc (0:360:1cm);
    \fill[red,opacity=.3] (0,0) arc (0:360:1cm);
    
    \draw[red] (0,0) arc (0:180:1cm);
    \draw[red] (2,0) arc (0:180:1cm);
    \draw[red] (1,-1) arc (0:180:1cm);
    \draw[blue,dashed] (-1,0) arc (180:360:1cm);

	\draw[->] (0,-1.5) -- (0,2.6) node[anchor = east]{$y_2$};
	\draw[->] (-3,0) -- (3,0) node[anchor = north]{$y_1$};

	\draw (1,-0.08) -- (1,0.08);
    \node[anchor = south] at (1,0){$k_0$};
    \draw (-1,-0.08) -- (-1,0.08);
    \node[anchor = south] at (-1,0){$-k_0$};
\end{tikzpicture}\hfill
\begin{tikzpicture}[scale=.82]
    \fill[red,opacity=.3] (2,0) arc (0:180:2cm);
    \fill[red,opacity=.3] (2,0) arc (0:-180:1cm);
    \fill[red,opacity=.3] (0,0) arc (0:-180:1cm);
    
    \draw[red] (0,0) arc (0:180:1cm);
    \draw[red] (2,0) arc (0:180:1cm);
    \draw[red] (1,-1) arc (0:180:1cm);
    \draw[red] (1,1) arc (0:180:1cm);
    \draw[blue,dashed] (-1,0) arc (-180:180:1cm);

	\draw[->] (0,-1.5) -- (0,2.6) node[anchor = east]{$y_2$};
	\draw[->] (-3,0) -- (3,0) node[anchor = north]{$y_1$};
	
	\draw (1,-0.08) -- (1,0.08);
    \node[anchor = south] at (1,0){$k_0$};
    \draw (-1,-0.08) -- (-1,0.08);
    \node[anchor = south] at (-1,0){$-k_0$};
	\draw (-0.08,2) -- (0.08,2);
    \node[anchor = west] at (0,2.15){$2k_0$};
\end{tikzpicture}

\caption{2D Fourier coverage for incidence direction varying according to $\bs(t) = (\cos t, \sin t)$ where $t \in [\pi/4, 3\pi/4]$ (left), $t \in [0,\pi]$ (center) and $t \in [0,2\pi]$ (right). Measurements are taken at $r_2=\rM$ with $\rM \in I^+$, recall \eqref{eq:I(f)}. The Fourier coverage (light red) is a union of infinitely many semicircles, some of which are depicted in red. Their centers lie on the dashed blue curve.}
\label{fig:rot_wave-2d}
\end{figure}

\subsection{Rigid motion of object} \label{sec:cover-rotation}
If the object moves according to a rigid motion $(t,\br) \mapsto R(t)^\top \br + \bd(t)$ with a rotation matrix
$$R(t) \in SO(d) \coloneqq \{Q\in\R^{d\times d} : Q^\top Q=I,\, \det Q=1\}$$ 
and a translation vector $\bd(t) \in \R^d$, $t\in[0,L]$,
it has the normalized scattering potential $f\circ\Psi_t$ with
\begin{equation}\label{eq:motion}
	\Psi_t\colon\R^d\to\R^d,\quad\br\mapsto R(t)(\br-\bd(t)).
\end{equation}
We denote by $u_t= k_0^2 ((f\circ\Psi_t) \ui)*G$ the wave scattered by this transformed potential and assume that $\rM \in I^\pm(f\circ\Psi_t)$ for all $t\in[0,L]$. Then \eqref{eq:recon} becomes
\begin{equation} \label{eq:recon_n}
	\tilde \ktran u_t(\bx,\rM)= \sqrt{\frac\pi2} \frac{ \i \e^{\pm\i \kappa \rM} k_0^2}{\kappa} \ktran f \left(R(t) \left(\bh^\pm -k_0 \bs \right) \right)
  \, \e^{-\i {\bd(t)}\cdot{(\bh^\pm-k_0 \bs)}},
\end{equation} 
cf.~\cite[sect.\ 2.2]{ElbQueSchSte23}. In this case we obtain the Fourier coverage
\begin{equation*}
	\mathcal{Y} = \left\{ R(t) \left( \bh^\pm(\bx) -k_0 \bs \right)\in\R^d : |\bx| < k_0, \, 0\le t \le L \right\},
\end{equation*}
which depends only on the rotation $R$ but not on the translation $\bd$.
It consists of rotated versions of the semicircle or hemisphere from \eqref{eq:hemisphere}, see \autoref{fig:2d-cover-half}, \autoref{fig:2d-max-cover} and \autoref{fig:cover3}. Comparing with \autoref{fig:rot_wave-2d} shows that rotating the object is not equivalent to rotating the incidence in terms of Fourier coverage. 

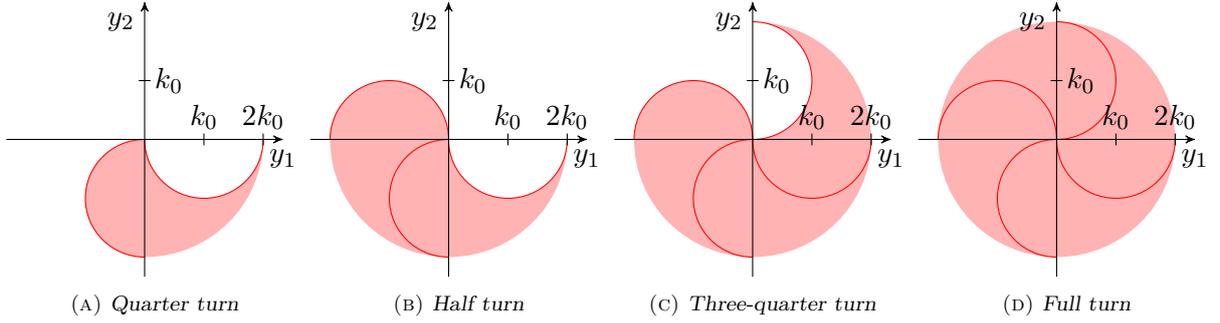
\begin{figure}[ht!]\centering
  \begin{subfigure}{.33\textwidth}\centering
    \begin{tikzpicture}[scale=.65]
      \fill[red,opacity=.3] (-1.5,-1.5) arc (-135:-45:2.12) -- (1.5,-1.5) arc (-90:-180:1.5) -- (0,0) arc (90:180:1.5);
      \fill[red,opacity=.3] (1.5,-1.5) arc (-45:45:2.12) -- (1.5,1.5) arc (90:180:1.5) -- (0,0) arc (90:0:1.5);
      
      \draw[red] (-1.5,-1.5) arc (180:0:1.5);
      \draw[red] (1.5,1.5) arc (90:270:1.5);
      
      \draw[->]   (-3,0) -- (3,0) node[below,black] {$y_1$};
      \draw[->]   (0,-2.5) -- (0,2.8) node[below left,black] {$y_2$};
      
      \draw (2.12,-0.15) -- (2.12,0.15);
      \node[above] at (1.65,0) {$\sqrt{2}k_0$};
      \draw (-0.15,1.5) -- (0.15,1.5);
      \node[right] at (0.1,1.5) {$k_0$};
      \draw (-0.15,-1.5) -- (0.15,-1.5);
      \node[right] at (0.1,-1.5) {$-k_0$};
    \end{tikzpicture}
    \caption{Quarter turn} \label{fig:2d-cover-half-1}
  \end{subfigure}\hfill
  \begin{subfigure}{.33\textwidth}\centering
	\begin{tikzpicture}[scale=.65]
    		\fill[red,opacity=.3] (-1.5,1.5) arc (135:-135:2.12) -- (-1.5,-1.5) arc (180:90:1.5) -- (0,0) arc (270:180:1.5);
    		
    		\draw[red] (-1.5,1.5) arc (180:360:1.5);
    		\draw[red] (-1.5,-1.5) arc (180:0:1.5);
    		\draw[red] (1.5,1.5) arc (90:270:1.5);
    		
    		\draw[->]   (-3,0) -- (3,0) node[below,black] {$y_1$};
      	\draw[->]   (0,-2.5) -- (0,2.8) node[below left,black] {$y_2$};
      	
      	\draw (2.12,-0.15) -- (2.12,0.15);
      	\node[above] at (1.65,0) {$\sqrt{2}k_0$};
      	\draw (-0.15,1.5) -- (0.15,1.5);
	    \node[right] at (0.1,1.5) {$k_0$};
	    \draw (-0.15,-1.5) -- (0.15,-1.5);
	    \node[right] at (0.1,-1.5) {$-k_0$};
    \end{tikzpicture}
  \caption{Half turn}
  \end{subfigure}\hfill
  \begin{subfigure}{.33\textwidth}\centering
    \begin{tikzpicture}[scale=0.65]
    		\fill[red,opacity=.3] (0,0) circle (2.12);
    		
    		\draw[red] (-1.5,1.5) arc (180:360:1.5);
    		\draw[red] (-1.5,-1.5) arc (180:0:1.5);
    		\draw[red] (1.5,1.5) arc (90:270:1.5);
    		\draw[red] (-1.5,1.5) arc (90:-90:1.5);
    		
		\draw[->]   (-3,0) -- (3,0) node[below,black] {$y_1$};
      	\draw[->]   (0,-2.5) -- (0,2.8) node[below left,black] {$y_2$};

      	\draw (2.12,-0.15) -- (2.12,0.15);
		\node[above] at (1.65,0) {$\sqrt{2}k_0$};
    \end{tikzpicture}
  \caption{Full turn} \label{fig:2d-cover-half-2}
  \end{subfigure}
    \caption{
      2D Fourier coverage for a rotating object, incidence direction $\bs=(0,1)$ and measurements taken at $r_2=\rM \in I^+$.
      The Fourier coverage (light red) is a union of infinitely many semicircles, some of which are depicted in red.
      \label{fig:2d-cover-half} }
\end{figure}
\begin{figure}[ht!]
  \centering
  \begin{subfigure}{.25\textwidth}\centering
    \begin{tikzpicture}[scale=0.52]
      \fill[red,opacity=.3] (0,0) arc (90:270:1.5) -- (0,-3) arc (270:360:3) -- (3,0) arc (0:-180:1.5);
      
      \draw[red] (3,0) arc (0:-180:1.5);
      \draw[red] (0,-3) arc (-90:-270:1.5);
      
      \draw[->]   (-3.5,0) -- (3.5,0) node[below,black] {$y_1$};
      \draw[->]   (0,-3.5) -- (0,3.5) node[below left,black] {$y_2$};
      
      \draw (1.5,-0.15) -- (1.5,0.15);
      \node[above] at (1.5,0) {$k_0$};
      
      \draw (3,-0.15) -- (3,0.15);
      \node[above] at (3,0) {$2k_0$};
      
      \draw (-0.15,1.5) -- (0.15,1.5);
      \node[right] at (0,1.5) {$k_0$};
    \end{tikzpicture}
    \caption{Quarter turn}
  \end{subfigure}\hfill
  \begin{subfigure}{.25\textwidth}\centering
  \begin{tikzpicture}[scale=0.52]
    		\fill[red,opacity=.3] (0,0) arc (0:180:1.5) -- (-3,0) arc (180:360:3) -- (3,0) arc (0:-180:1.5);
    		
    		\draw[red] (-3,0) arc (180:0:1.5);
    		\draw[red] (3,0) arc (0:-180:1.5);
    		\draw[red] (0,-3) arc (-90:-270:1.5);
    		
		\draw[->]   (-3.5,0) -- (3.5,0) node[below,black] {$y_1$};
      	\draw[->]   (0,-3.5) -- (0,3.5) node[below left,black] {$y_2$};

		\draw (1.5,-0.15) -- (1.5,0.15);
		\node[above] at (1.5,0) {$k_0$};
		
		\draw (3,-0.15) -- (3,0.15);
		\node[above] at (3,0) {$2k_0$};
		
		\draw (-0.15,1.5) -- (0.15,1.5);
		\node[right] at (0,1.5) {$k_0$};
    \end{tikzpicture}
  \caption{Half turn} \label{fig:2d-max-cover-half}
  \end{subfigure}\hfill
  \begin{subfigure}{.25\textwidth}\centering
  \begin{tikzpicture}[scale=0.52]
    \fill[red,opacity=.3] (0,0) arc (0:180:1.5) -- (-3,0) arc (180:360:3) -- (3,0) arc (0:-180:1.5);
    \fill[red,opacity=.3] (0,0) arc (180:360:1.5) -- (3,0) arc (0:90:3) -- (0,3) arc (90:-90:1.5);
    
    \draw[red] (-3,0) arc (180:0:1.5);
    \draw[red] (3,0) arc (0:-180:1.5);
    \draw[red] (0,-3) arc (-90:-270:1.5);
    \draw[red] (0,3) arc (90:-90:1.5);
    
    \draw[->]   (-3.5,0) -- (3.5,0) node[below,black] {$y_1$};
    \draw[->]   (0,-3.5) -- (0,3.5) node[below left,black] {$y_2$};
    
    \draw (1.5,-0.15) -- (1.5,0.15);
    \node[above] at (1.5,0) {$k_0$};
    
    \draw (3,-0.15) -- (3,0.15);
    \node[above] at (3,0) {$2k_0$};
    
    \draw (-0.15,1.5) -- (0.15,1.5);
    \node[right] at (0,1.5) {$k_0$};
  \end{tikzpicture}
  \caption{Three-quarter turn} \label{fig:2d-max-cover-34}
  \end{subfigure}\hfill
  \begin{subfigure}{.25\textwidth}\centering
    \begin{tikzpicture}[scale=0.52]
    		\fill[red,opacity=.3] (0,0) circle (3);
    		
    		\draw[red] (-3,0) arc (180:0:1.5);
    		\draw[red] (0,3) arc (90:-90:1.5);
    		\draw[red] (3,0) arc (0:-180:1.5);
    		\draw[red] (0,-3) arc (-90:-270:1.5);
    		
		\draw[->]   (-3.5,0) -- (3.5,0) node[below,black] {$y_1$};
      	\draw[->]   (0,-3.5) -- (0,3.5) node[below left,black] {$y_2$};

		\draw (1.5,-0.15) -- (1.5,0.15);
		\node[above] at (1.5,0) {$k_0$};
		
		\draw (3,-0.15) -- (3,0.15);
		\node[above] at (3,0) {$2k_0$};
		
		\draw (-0.15,1.5) -- (0.15,1.5);
		\node[right] at (0,1.5) {$k_0$};
    \end{tikzpicture}
  \caption{Full turn} \label{fig:2d-max-cover-full}
  \end{subfigure}%
    \caption{
      2D Fourier coverage for a rotating object, incidence direction $\bs=(1,0)$ and measurements taken at $r_2=\rM \in I^+$.
      The Fourier coverage (light red) is a union of infinitely many semicircles, some of which are depicted in red.
      \label{fig:2d-max-cover}}
  \end{figure}
  \begin{figure}[ht!]
  \centering
  \includegraphics[trim={3cm 2.6cm 2cm 2.2cm},clip, width=0.33\textwidth]{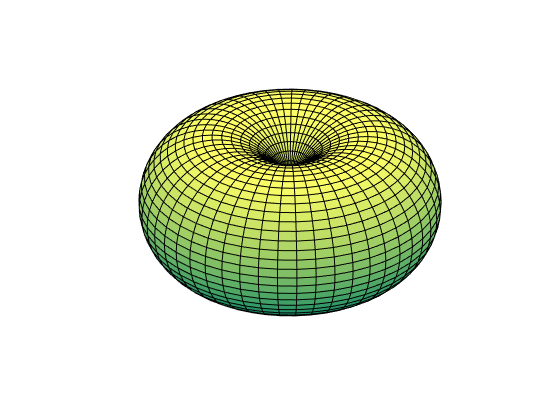} \quad
  \includegraphics[trim={4cm 1.8cm 1cm 2.3cm},clip, width=0.3\textwidth]{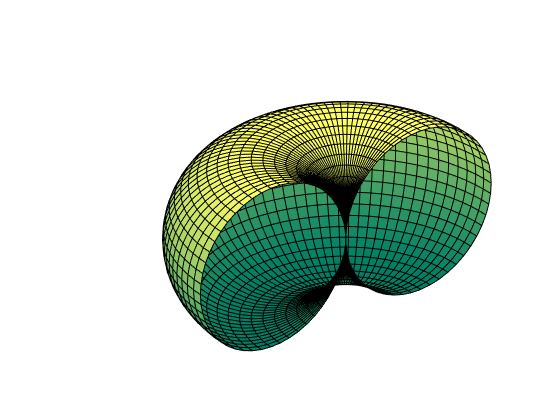} \quad
  \begin{tikzpicture}[scale=0.65]
    \draw[->]   (0,-2.5) -- (0,2.5) node[below left] {$y_1$};
    \draw[->]   (-3.5,0) -- (3.5,0) node[below] {$y_2$};
	\fill[red,opacity=.3] (1.5,0) circle (1.5);
	\fill[red,opacity=.3] (-1.5,0) circle (1.5);
	\draw (1.5,-0.15) -- (1.5,0.15) node[above] {$k_0$};
	\draw (-1.5,-0.15) -- (-1.5,0.15) node[above] {$-k_0$}; 
  \end{tikzpicture}
  \caption{3D Fourier coverage for a full rotation of the object about the $r_1$-axis with incidence direction $\bs=(0, 1, 0)$. 
    \emph{Left and center:} 3D visualization.
    \emph{Right:} 2D cross section through $y_1y_2$-plane. In this case there is no difference in the Fourier coverage between $\rM \in I^+$ or $\rM \in I^-$.
}
  \label{fig:cover3}
\end{figure}

\subsection{Location of measurement hyperplane}\label{sec:measurement}
Consider now moving the measurement hyperplane. It follows from \eqref{eq:recon} that the signed distance $\rM$ from the origin to the hyperplane does not affect the Fourier coverage, at least as long as it stays on one side of the support of~$f$. Therefore, we keep $\rM$ fixed and rotate the measurement hyperplane around the origin according to $R(t)\in SO(d)$. This is equivalent to rotating the incidence direction and the object simultaneously. Denote by $\bs_0$ the original incidence direction. Combining \eqref{eq:recon_n2} for the incidence $\bs(t) = R(t)^\top \bs_0$ with \eqref{eq:recon_n} for the normalized scattering potential $f(R(t) \cdot)$, we obtain
\begin{align*}
	\tilde\ktran u_t(\bx, \rM)
		&= \sqrt{\frac\pi2} \frac{ \i \e^{\pm \i \kappa \rM}}{\kappa} \ktran f \left(R(t)\left(\bh^\pm(\bx) -k_0 R(t)^\top \bs_0 \right) \right) \\
		&= \sqrt{\frac\pi2} \frac{ \i \e^{\pm \i \kappa \rM}}{\kappa} \ktran f \left(R(t) \bh^\pm(\bx) -k_0 \bs_0 \right), \qquad \text{if } \rM \in I^\pm(f\circ R(t)).
\end{align*}
The resulting Fourier coverage is
\begin{equation*}
	\mathcal{Y} = \left\{ R(t) \bh^\pm(\bx) -k_0 \bs_0\in\R^d : |\bx| < k_0, \, 0\le t \le L \right\}.
\end{equation*}
The hemisphere $\bh^\pm$, which is centered at the origin, is rotated \emph{before} it is translated by the fixed vector $-k_0\bs_0$. This means that $\mathcal{Y}\subset\{\by\in\R^d:\abs{\by-k_0\bs_0}=k_0\}$. In contrast to the previous two situations, the coverage is always a set of measure zero. See \autoref{fig:2d-cover-hyperplane}.

\begin{figure}[ht!]\centering
	\begin{tikzpicture}[scale=0.85]
	
	\draw[red] (-1.5,-1.5) arc (180:-90:1.5);
	
	\draw[->]   (-2.5,0) -- (2.5,0) node[below,black] {$y_1$};
	\draw[->]   (0,-3.2) -- (0,0.5) node[right,black] {$y_2$};
	
	\draw[black] (-0.12,-1.5) -- (0.12,-1.5);
	\node[left] at (0,-1.5) {$-k_{0}$};
	\end{tikzpicture}
	\qquad \qquad
	\begin{tikzpicture}[scale=0.85]
	
	\draw[red] (-1.5,-1.5) arc (180:-180:1.5);
	
	\draw[black,->]   (-2.5,0) -- (2.5,0) node[below,black] {$y_1$};
	\draw[black,->]   (0,-3.2) -- (0,0.5) node[right,black] {$y_2$};
	
	\draw[black] (-0.12,-1.5) -- (0.12,-1.5);
	\node[left] at (0,-1.5) {$-k_{0}$};
	\end{tikzpicture}
		\caption{
		2D Fourier coverage for rotating measurement line, starting with the measurement line $r_2=\rM \in I^+$ and rotating it clockwise. Left: Quarter turn. Right: Half turn (or more). Note that the coverage is only the circle, not its interior.}
		\label{fig:2d-cover-hyperplane}
\end{figure}

\subsection{Wave number} \label{sec:cover-frequencies}
We examine how altering the wave number $k_0$ of the incident plane wave affects the Fourier coverage. Denote by $u_t$ the scattered wave generated by the incident field $\ui(\bx)=\e^{\i k_0(t)\, \bx \cdot \bs}$ with wave number $k_0(t)>0$ for $t\in[0,L]$.
We assume that the object's refractive index $n$ and therefore also $f$ does not depend on $k_0(t)$.
Then, according to \eqref{eq:recon}, we have
\begin{equation*}
\tilde \ktran u_t(\bx,\rM)= \sqrt{\frac\pi2} \frac{ \i \e^{\pm \i \kappa(\bx,t) \rM} k_0(t)^2}{\kappa(\bx,t)} \ktran f \left(\bh^\pm(\bx,t) -k_0(t) \bs \right), \qquad \text{if } \rM \in I^\pm.
\end{equation*}
Notice that $\kappa(\bx,t) = \sqrt{k_0(t)^2-|\bx|^2}$ and therefore also $\bh^\pm(\bx,t) = \left(\bx,\pm\kappa(\bx,t)\right)^\top$ depend on $t$ in this case. The Fourier coverage
\begin{equation*}
	\mathcal{Y} = \left\{ \bh^\pm(\bx,t) - k_0(t) \bs \in \R^d : 0\le t \le L, \, |\bx| < k_0(t) \right\}
\end{equation*}
is a union of hemispheres that are translated in direction of $\bs$ and scaled such that each passes through the origin. In contrast to the previous scenarios, there are large missing parts near the origin, see the 2D case in \autoref{fig:2d-cover-frequencies}. This also holds in 3D, where the corresponding Fourier coverages are obtained by rotating those depicted in \autoref{fig:2d-cover-frequencies} around the $y_2$ axis.
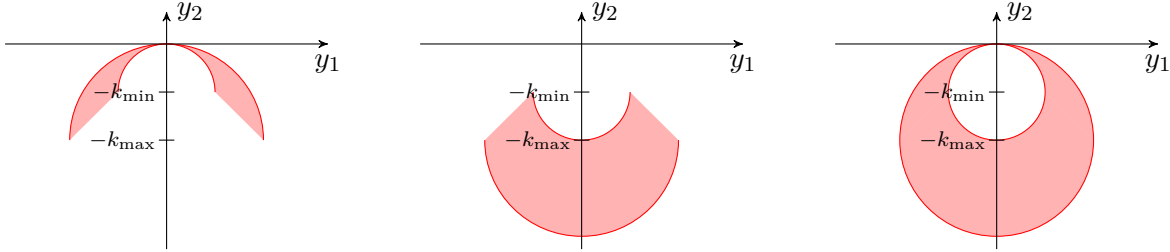
\begin{figure}[ht!]\centering
	\begin{tikzpicture}[scale=0.85]
	\fill[red,opacity=.3] (-1.5,-1.5) arc (180:0:1.5) -- (.75,-.75) arc (0:180:.75);
	
	\draw[red] (-1.5,-1.5) arc (180:0:1.5);
	\draw[red] (-.75,-.75) arc (180:0:.75);
	
	\draw[->]   (-2.5,0) -- (2.5,0) node[below,black] {$y_1$};
	\draw[->]   (0,-3.2) -- (0,0.5) node[right,black] {$y_2$};
	
	\draw[black] (-0.12,-.75) -- (0.12,-.75);
	\node[left] at (0,-0.75) {\scriptsize{$-k_{\mathrm{min}}$}};
	\draw[black] (-0.12,-1.5) -- (0.12,-1.5);
	\node[left] at (0,-1.5) {\scriptsize{$-k_{\mathrm{max}}$}};
	\end{tikzpicture}
	\qquad
	\begin{tikzpicture}[scale=0.85]
	\fill[red,opacity=.3] (-1.5,-1.5) arc (180:360:1.5) -- (.75,-.75) arc (0:-180:.75);
	
	\draw[red] (-1.5,-1.5) arc (180:360:1.5);
	\draw[red] (-.75,-.75) arc (180:360:.75);
	
	\draw[black,->]   (-2.5,0) -- (2.5,0) node[below,black] {$y_1$};
	\draw[black,->]   (0,-3.2) -- (0,0.5) node[right,black] {$y_2$};
	
	\draw[black] (-0.12,-.75) -- (0.12,-.75);
	\node[left] at (0,-0.75) {\scriptsize{$-k_{\mathrm{min}}$}};
	\draw[black] (-0.12,-1.5) -- (0.12,-1.5);
	\node[left] at (0,-1.5) {\scriptsize{$-k_{\mathrm{max}}$}};
	\end{tikzpicture}
	\qquad
	\begin{tikzpicture}[scale=0.85]
	\fill[red,opacity=.3] (-1.5,-1.5) arc (180:0:1.5) -- (.75,-.75) arc (0:180:.75);
	\fill[red,opacity=.3] (-1.5,-1.5) arc (180:360:1.5) -- (.75,-.75) arc (0:-180:.75);
	
	\draw[red] (0,-.75) circle (.75cm);
	\draw[red] (0,-1.5) circle (1.5cm);
	
	\draw[black,->]   (-2.5,0) -- (2.5,0) node[below,black] {$y_1$};
	\draw[black,->]   (0,-3.2) -- (0,0.5) node[right,black] {$y_2$};
	
	\draw (-0.12,-.75) -- (0.12,-.75);
	\node[left] at (0,-0.75) {\scriptsize{$-k_{\mathrm{min}}$}};
	\draw (-0.12,-1.5) -- (0.12,-1.5);
	\node[left] at (0,-1.5) {\scriptsize{$-k_{\mathrm{max}}$}};
	\end{tikzpicture}
	\caption{
		2D Fourier coverage with $\bs=(0,1)$
		where the wave number $k_0(t)$ covers the interval $[k_{\mathrm{min}},k_{\mathrm{max}}]$.
		Left: measurements taken at $r_2=\rM$ with $\rM \in I^+$.
		Center: $\rM \in I^-$.
		Right: both measurements combined.
		\label{fig:2d-cover-frequencies} }
\end{figure}

\subsection{Varying all parameters at once} \label{sec:coverage-general}

Let us assume that the object rotation $R(t) \in SO(d)$, the translation $\bd(t)\in\R^d$, the incidence direction $\bs(t)\in\S^{d-1}$ and the wave number $k_0(t) \in \R_{>0}$ all depend on $t\in[0,L]$. We denote by $\ui_t(\br)=\e^{\i k_0(t) \br\cdot \bs(t)}$ the corresponding incident wave and by
\begin{equation} \label{eq:ut}
  u_t \coloneqq k_0(t)^2 \left( (f\circ\Psi_t)\, \ui_t \right)*G
\end{equation} 
the resulting wave scattered by $f\circ\Psi_t$, see \eqref{eq:motion}.
Analogously to \eqref{eq:recon_n}, we have
\begin{equation} \label{eq:recon_gen}
  \tilde \ktran u_t(\bx,\rM)= \sqrt{\frac\pi2}\, \frac{ \i \e^{\pm \i \kappa(\bx,t) \rM}\, {k_0(t)^2}}{\kappa(\bx,t)}\, \ktran f \left( R(t) \left( \bh^\pm(\bx,t) -k_0(t) \bs(t)\, \right) \right)
  \e^{-\i {\bd(t)}\cdot\left(\bh^\pm(\bx,t) -k_0(t) \bs(t)\right)}
\end{equation}
if $\rM \in I^\pm(f\circ \Psi_t)$, and the respective Fourier coverage is given by
\begin{equation*}
\mathcal{Y} = \left\{ R(t) \left( \bh^\pm(\bx,t) -k_0(t)\, \bs(t) \right) \in \R^d : |\bx| < k_0(t), \, 0\le t \le L \right\}.
\end{equation*}

\begin{remark}[Maximal cover] \label{rem:max-cover}
Assume that $k_0(t)$ has a maximum $k_{\mathrm{max}}.$ Then, under the assumptions of \autoref{sec:coverage-general} the set $\mathcal{Y}$ is always contained in a ball of radius $2k_{\mathrm{max}}.$ In 2D this maximal coverage can be attained when the object makes a full turn and the propagation direction of the plane wave is parallel to the measurement line, see \autoref{fig:2d-max-cover-full}. On the other hand, the fact that $\mathcal{Y}$ is bounded while $\supp \hat f$ is unbounded implies that $f$ cannot be reconstructed exactly using the Fourier diffraction theorem alone \cite{Wol69}.
\end{remark}

\begin{remark}[Redundancy of measurement planes] \label{rem:reflection}
So far we have always considered two options for the location of the measurement hyperplane, $\rM \in I^-$ or $\rM \in I^+$, each leading to a different coverage in general.
The following argument shows that all information obtained at one hyperplane can also be obtained at the other by suitably adapting the incidence direction and the orientation of the object.
For instance, the frequency components of $f$ obtained at $I^-$ (and thus via $\bh^-$) can be accessed at $I^+$ (and via $\bh^+$) when replacing the incidence $\bs_0=(s_1,\dots,s_d)\in\S^{d-1}$ by $(s_1,\dots,s_{d-2},-s_{d-1},-s_d)$ and using the rotation $R_0 = \operatorname{diag}(1,\dots,1,-1,-1)\in SO(d)$,
since we have for $\bx\in\R^{d-1}$ with $\abs\bx<k_0$ that
\[
\bh^+(\bx)-k_0\bs_0
=
R_0 \bigl(\bh^-(x_1,\dots,x_{d-2},-x_{d-1}) - k_0(s_1,\dots,s_{d-2},-s_{d-1},-s_d)\bigr).
\]
\end{remark}
\section{Filtered backpropagation} \label{sec:backprop}
Filtered backpropagation formulae provide an explicit expression for a low-pass filtered approximation of the normalized scattering potential $f$, see \cite{Dev82}, \cite[Sect.\ 6.4.2]{KakSla01} or \cite{KirQueRitSchSet21}. Recall the Fourier coverage  $\mathcal{Y}\subset\R^d$ of the experiment from the previous section.
The filtered backpropagation of $f$ is defined by the Fourier inversion
\begin{equation*}
  f_\mathcal{Y}
  \coloneqq
  \ktran^{-1}(\mathbf{1}_\mathcal{Y}\hat f).
\end{equation*}
If $\hat f$ is integrable on $\mathcal{Y}$, we can express the filtered backpropagation by the integral
\begin{equation} \label{eq:bpp}
f_{\mathcal{Y}}(\br) = (2\pi)^{-\frac{d}{2}}\int_{\mathcal{Y}}\hat f(\by)\e^{\i\by\cdot\br} \dd \by.
\end{equation}
Before applying the Fourier diffraction theorem, \autoref{thm:fdt}, to express the right-hand side in terms of the measurements $u_t(\cdot,\rM)$, the integral is typically transformed into one over $(\bx,t)$. Recall that $\bx=(x_1,\ldots,x_{d-1})$ are the spatial frequencies of the measurements of the scattered wave. This change of coordinates circumvents the irregular sampling in the Fourier domain, which would result from directly discretizing \eqref{eq:recon_gen}.

The following characterization of the filtered backpropagation is a direct consequence of Plancherel's identity, which states that
$
\norm{f}_{L^2(\R^d)}
=
\norm{\smash{\hat f}}_{L^2(\R^d)}
$
for all $f\in L^2(\R^d)$.

\begin{theorem} \label{rem:bp-error}
  Let $f\in L^2(\R^d)$ and the Fourier coverage $\mathcal{Y}\subset\R^d$ be measurable. Then
  \begin{enumerate} \setlength{\parskip}{0pt}  \vspace{-13pt}
  \item $f_{\mathcal{Y}}$ has minimal $L^2$ norm among all functions $g\in L^2(\R^d)$ that satisfy $\hat g = \hat f$ on $\mathcal{Y}$,
  \item $f_{\mathcal{Y}}$ is the $L^2$ best approximation to $f$ in the subspace
  $
    \left\{ g \in L^2(\R^d) : \supp \hat g \subset \mathcal{Y} \right\},
  $
  and
  \item
  if $\mathcal{Y}_1\supset\mathcal{Y}$, then
  $
  \norm{f-f_{\mathcal{Y}_1}}_{L^2(\R^d)}
  \le
  \norm{f-f_{\mathcal{Y}}}_{L^2(\R^d)}.
  $
  \end{enumerate}
\end{theorem}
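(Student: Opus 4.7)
The plan is to reduce everything to statements about $\hat f$ via Plancherel's identity, since the $L^2$ Fourier transform is a unitary isomorphism. First I note that $f_{\mathcal{Y}}$ is well defined as an element of $L^2(\R^d)$: indeed, $\mathbf{1}_{\mathcal{Y}} \hat f$ is measurable and dominated pointwise by $\hat f \in L^2(\R^d)$, so its inverse Fourier transform lies in $L^2(\R^d)$ and satisfies $\widehat{f_{\mathcal{Y}}} = \mathbf{1}_{\mathcal{Y}} \hat f$ almost everywhere.

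For claim \emph{(i)}, let $g \in L^2(\R^d)$ satisfy $\hat g = \hat f$ on $\mathcal{Y}$. Using the disjoint decomposition $\R^d = \mathcal{Y} \cup \mathcal{Y}^c$ and Plancherel, I write
\begin{equation*}
\norm{g}_{L^2(\R^d)}^2 = \int_{\mathcal{Y}} \abs{\hat g}^2 \dd \by + \int_{\mathcal{Y}^c} \abs{\hat g}^2 \dd \by = \int_{\mathcal{Y}} \abs{\hat f}^2 \dd \by + \int_{\mathcal{Y}^c} \abs{\hat g}^2 \dd \by.
\end{equation*}
The first term is fixed by the constraint, while the second is nonnegative and vanishes exactly when $\hat g = 0$ on $\mathcal{Y}^c$, i.e.\ when $\hat g = \mathbf{1}_{\mathcal{Y}} \hat f = \widehat{f_{\mathcal{Y}}}$. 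This forces $g = f_{\mathcal{Y}}$ in $L^2$, establishing minimality.

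For claim \emph{(ii)}, let $g \in L^2(\R^d)$ with $\supp \hat g \subset \mathcal{Y}$. Applying Plancherel to $f-g$ and splitting the integral along $\mathcal{Y}$ and $\mathcal{Y}^c$ again yields
\begin{equation*}
\norm{f-g}_{L^2(\R^d)}^2 = \int_{\mathcal{Y}} \abs{\hat f - \hat g}^2 \dd \by + \int_{\mathcal{Y}^c} \abs{\hat f}^2 \dd \by,
\end{equation*}
since $\hat g$ vanishes on $\mathcal{Y}^c$. Only the first term depends on $g$, and it is minimized (in fact annihilated) precisely by choosing $\hat g = \mathbf{1}_{\mathcal{Y}} \hat f$, which gives $g = f_{\mathcal{Y}}$. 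Note that $f_{\mathcal{Y}}$ indeed belongs to the admissible subspace because $\supp \widehat{f_{\mathcal{Y}}} \subset \mathcal{Y}$.

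For claim \emph{(iii)}, specializing the identity in (ii) to $g = f_{\mathcal{Y}}$ gives
\begin{equation*}
\norm{f - f_{\mathcal{Y}}}_{L^2(\R^d)}^2 = \int_{\mathcal{Y}^c} \abs{\hat f}^2 \dd \by,
\end{equation*}
and analogously $\norm{f - f_{\mathcal{Y}_1}}_{L^2(\R^d)}^2 = \int_{\mathcal{Y}_1^c} \abs{\hat f}^2 \dd \by$. Since $\mathcal{Y}_1 \supset \mathcal{Y}$ implies $\mathcal{Y}_1^c \subset \mathcal{Y}^c$, the monotonicity of the integral of a nonnegative function over nested sets yields the claimed inequality. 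There is no real obstacle here; the only subtlety is the $L^2$-well-posedness of $f_{\mathcal{Y}}$ and the clean bookkeeping of the orthogonal decomposition of $L^2(\R^d)$ induced by the partition $(\mathcal{Y}, \mathcal{Y}^c)$ on the Fourier side.
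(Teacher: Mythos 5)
Your proof is correct and follows exactly the route the paper intends: the paper gives no explicit proof, stating only that the theorem is ``a direct consequence of Plancherel's identity,'' and your orthogonal decomposition of the Fourier side along $(\mathcal{Y},\mathcal{Y}^c)$ is precisely that argument spelled out. All three claims are handled cleanly, so nothing further is needed.
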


\subsection{General filtered backpropagation formula}\label{sec:gen-backprop}

We consider the general experiment of \autoref{sec:coverage-general} in which the direction of incidence $\bs(t)\in\S^{d-1}$, the object orientation $R(t)\in SO(d)$ and translation $\bd(t)\in\R^d$, as well as the wave number $k_0(t)>0$ can vary simultaneously depending on $t \in [0,L]$. As pointed out in \autoref{rem:reflection}, we can restrict ourselves to a measurement hyperplane with $\rM \in I^+$ without losing generality. Therefore, we set accordingly $\bh\coloneqq\bh^+$, see \eqref{eq:h}, and define
\begin{equation} \label{eq:U}
\mathcal{U}\coloneqq\left\{(\bx,t)\in\R^d: \abs{\bx} < k_0(t),0\leq t\leq L\right\}.
\end{equation}
The Fourier coverage of the experiment is given by $\mathcal{Y} = T (\mathcal{U})$, where
\begin{equation}\label{eq:T}
T\colon \mathcal{U} \to \mathbb{R}^d, \quad T(\bx,t) \coloneqq R(t)(\bh(\bx,t)-k_0(t)\bs(t)).
\end{equation}
With this notation, we obtain by \eqref{eq:recon_gen} the following relation between the scattered wave $u_t$ and the normalized scattering potential $f$,
\begin{equation} \label{eq:potential}
\tilde\ktran u_t(\bx,\rM)=\sqrt{\frac\pi2} \frac{ \i \e^{\i \kappa \rM}\, {k_0(t)^2}}{\kappa}\,\ktran f(T(\bx,t))
\,\e^{-\i {\bd(t)}\cdot{T(\bx,t)}}.
\end{equation}

\begin{remark}[Experimental setup with discontinuous parameters]
The following backpropagation formula in \autoref{thm:reconstruction} specifically allows $\bs$, $R$ and $k_0$ to be discontinuous functions of $t$. This was done in order to be able to handle experimental setups where one or more of those parameters do not change continuously but only attain a few discrete values, without having to impose unrealistic smoothness assumptions. Imagine, for instance, an object which is subsequently rotated about two different axes, or, one which, during rotation, is subsequently illuminated from a finite number of directions. Such situations can be modeled by a piecewise smooth rotation map $R$ or a piecewise constant $\bs$, respectively. Subsequently, all obtained measurements can be combined into one reconstruction using equation \eqref{eq:reconstruction} below.
\end{remark}

For proving the filtered backpropagation formula, we need the following change of variables formula, which is due to \cite[Thm.\ 2]{Haj93}, see also \cite[Thm.\ 5.8.30]{Bog07}.
The quantity $\operatorname{Card}(T^{-1}(\by))$ is called the \emph{Banach indicatrix} of the map $T$, where $\operatorname{Card}$ denotes the counting measure.

\begin{lemma} \label{lem:substitution}
Let $\Omega\subset \R^d$ be an open set and let $T \colon \Omega \to \R^d$ be a measurable map that has partial derivatives a.e.\ in $\Omega$. 
Denote by ${\det (\nabla T)}$ the determinant of the matrix formed by the partial derivatives of $T$. Suppose, in addition, that $T$ has the Luzin N property, which means that $T$ maps null sets to null sets.
Then, for every measurable set $\mathcal{E} \subset \Omega$ and every measurable function $v\colon \R^d \to \R$, the functions
\begin{equation} \label{eq:2functions}
v(T(\br))\abs{\det(\nabla T(\br))} \mathbf{1}_\mathcal{E}(\br) \quad\text{and}\quad v(\by)\operatorname{Card}(T^{-1}(\by) \cap \mathcal{E})
\end{equation}
are measurable, where we set $v(T(\br))\abs{\det\nabla T(\br)} = 0$ if $v(T(\br))$ is not defined. If one of these functions is integrable, then so is the other and
\begin{equation} \label{eq:substitution}
\int_\mathcal{E} v(T(\br))\abs{\det(\nabla T(\br))}\dd \br 
= \int_{\R^d} v(\by)\operatorname{Card}(T^{-1}(\by) \cap \mathcal{E})\dd \by.
\end{equation}
\end{lemma}

\begin{theorem}[Filtered backpropagation formula]\label{thm:reconstruction}
Let $f\in L^1(\R^d)$ have compact support and $L>0$. Assume that each of the maps $\bs\colon [0,L] \to \mathbb{S}^{d-1}$, $R\colon [0,L] \to SO(d)$, $\bd\colon [0,L] \to \R^d$ and $k_0\colon [0,L] \to (0,+\infty)$ is piecewise $C^1$ in every component, i.e., except at finitely many points $t_1,\ldots,t_m \in [0,L]$,
and that $\bs'$, $R'$, and $k_0'$ are bounded. 
Let $u_t$ be defined as in \eqref{eq:ut} and assume that $\rM \in I^+(f \circ \Psi_t)$ for all $t\in[0,L]$.
Then, $f_{\mathcal{Y}}(\br)$ is finite for all $\br \in \R^d$ and
\begin{equation}\label{eq:reconstruction}
	f_{\mathcal{Y}}(\br) = 2(2\pi)^{-\frac{1+d}{2}}  \int_{\mathcal{U}} \frac{\kappa(\bx,t) \, \e^{\i T(\bx ,t) \cdot (\br+\bd(t))}\, \tilde\ktran u_t(\bx,\rM) \abs{\det \left(\nabla T(\bx,t) \right)} }{k_0(t)^2\, \i \e^{\i \kappa(\bx,t) \rM} \operatorname{Card}(T^{-1}(T(\bx,t)))} \dd(\bx,t),
\end{equation}
where 
the Jacobian determinant of $T$, as defined in \eqref{eq:T}, is given by
\begin{equation}\label{eq:jacobian}
	\det \left(\nabla T(\bx,t)\right) = \frac{k_0(t) k_0'(t) - R(t)\bh(\bx,t) \cdot (k_0(t) R(t) \bs(t))'}{\kappa(\bx,t)}.
\end{equation}
\end{theorem}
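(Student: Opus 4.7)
My plan is to start from the definition
\begin{equation*}
f_{\mathcal{Y}}(\br) = (2\pi)^{-\frac{d}{2}}\int_{\mathcal{Y}}\hat f(\by)\e^{\i\by\cdot\br} \dd\by
\end{equation*}
and reduce it to an integral over $\mathcal{U}$ by pulling back along $T$, then use the Fourier diffraction theorem in the form \eqref{eq:potential} to replace $\hat f(T(\bx,t))$ by the measured data $\tilde\ktran u_t(\bx,\rM)$. Pointwise finiteness of $f_{\mathcal{Y}}(\br)$ is the easiest ingredient: since $f\in L^1(\R^d)$ has compact support, $\hat f$ is bounded and continuous, and \autoref{rem:max-cover} together with the hypotheses on $k_0$ implies $\mathcal{Y}$ is bounded, so $\mathbf{1}_{\mathcal{Y}}\hat f$ lies in $L^1(\R^d)$.

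The key step is the change of variables under the noninjective, piecewise-smooth map $T\colon\mathcal{U}\to\R^d$. I would apply the area formula from geometric measure theory to each of the finitely many subintervals on which $\bs$, $R$, $\bd$, $k_0$ are $C^1$ (so that $T$ is $C^1$ and the boundedness assumptions on $\bs'$, $R'$, $k_0'$ guarantee $\nabla T$ is bounded), namely
\begin{equation*}
\int_{\mathcal{U}} g(T(\bx,t))\,\lvert\det \nabla T(\bx,t)\rvert\,\dd(\bx,t) = \int_{\R^d} g(\by)\,\operatorname{Card}(T^{-1}(\by))\,\dd\by,
\end{equation*}
and then sum the contributions. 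Choosing $g(\by)=\mathbf{1}_{\mathcal{Y}}(\by)\hat f(\by)\e^{\i\by\cdot\br}$, which is legal because $\mathcal{Y}=T(\mathcal{U})$ guarantees $\operatorname{Card}(T^{-1}(T(\bx,t)))\ge 1$ on $\mathcal{U}$, yields
\begin{equation*}
f_{\mathcal{Y}}(\br) = (2\pi)^{-\frac{d}{2}}\int_{\mathcal{U}} \frac{\hat f(T(\bx,t))\,\e^{\i T(\bx,t)\cdot\br}\,\lvert\det\nabla T(\bx,t)\rvert}{\operatorname{Card}(T^{-1}(T(\bx,t)))}\dd(\bx,t).
\end{equation*}
Substituting $\hat f(T(\bx,t))$ from \eqref{eq:potential} and collecting constants via $(2\pi)^{-d/2}\sqrt{2/\pi}=2(2\pi)^{-(1+d)/2}$ produces exactly \eqref{eq:reconstruction}. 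The factor $e^{\i T\cdot(\br+\bd)}$ arises because solving \eqref{eq:potential} for $\hat f$ introduces an $\e^{\i\bd\cdot T}$ that combines with the Fourier kernel $\e^{\i T\cdot\br}$.

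For the Jacobian \eqref{eq:jacobian}, I factor $\nabla T=R(t)\,M(\bx,t)$ so that, because $\det R(t)=1$, it suffices to compute $\det M$ whose first $d-1$ columns are $e_j-(x_j/\kappa)e_d$ and whose last column is $v\coloneqq R(t)^{\top}\partial_t T$. Using these columns to eliminate the first $d-1$ components of $v$ collapses the determinant to a single scalar $v_d+\sum_{j<d} v_j x_j/\kappa = \bh(\bx,t)\cdot v/\kappa$. Expanding $v$ and exploiting the antisymmetry of $R^{\top}R'$, which kills the quadratic term $\bh\cdot R^{\top}R'\bh$, the remaining terms regroup as $-R\bh\cdot(k_0'R\bs+k_0R'\bs+k_0R\bs')=-R\bh\cdot(k_0 R\bs)'$, and the residual $k_0k_0'/\kappa$ from $\partial_t\kappa$ supplies the remaining term, giving \eqref{eq:jacobian}.

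The main obstacle I anticipate is the rigorous application of the area formula across the finitely many discontinuity points $t_1,\dots,t_m$: one has to check that the contributions from the smooth pieces add up correctly, which works because the measure of the exceptional slices $\{t=t_i\}$ is zero in $\mathcal{U}$ and the Banach indicatrix is defined globally on $\mathcal{Y}$. A minor but necessary sanity check is that the integrand is genuinely integrable near $|\bx|=k_0(t)$: the prefactor $\kappa\,\lvert\det\nabla T\rvert$ equals $\lvert k_0k_0'-R\bh\cdot(k_0R\bs)'\rvert$, which stays bounded as $\kappa\to 0$, so the singularities of $\tilde\ktran u_t$ and $\lvert\det\nabla T\rvert$ cancel the $1/\kappa$ factors exactly, leaving a locally bounded integrand on $\mathcal{U}$.
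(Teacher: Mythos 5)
Your overall strategy — pull back the Fourier integral along $T$ via an area/change-of-variables formula with the Banach indicatrix, substitute the Fourier diffraction relation \eqref{eq:potential}, and compute the Jacobian by factoring out $R(t)$ and using the skew-symmetry of $R^\top R'$ — is exactly the paper's, and your Jacobian computation and the finiteness argument for $f_{\mathcal{Y}}(\br)$ are correct. However, there is a genuine gap in the central step. You assert that on each subinterval where the parameters are $C^1$ the boundedness of $\bs'$, $R'$, $k_0'$ ``guarantees $\nabla T$ is bounded,'' so that the standard (Lipschitz) area formula applies. This is false: the spatial derivatives $\partial T/\partial x_i = R(\be_i - (x_i/\kappa)\be_d)$ and the term $k_0k_0'/\kappa$ in $\partial T/\partial t$ blow up as $\abs{\bx}\to k_0(t)$, so $T$ is not Lipschitz on $\mathcal{U}$ and Federer's area formula for Lipschitz maps is not directly applicable. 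This is precisely the technical obstruction the paper's proof is organized around: it invokes a change-of-variables theorem for merely a.e.\ approximately differentiable maps, which requires verifying (i) a.e.\ existence of partial derivatives, (ii) the Luzin N property of $T$ on $\mathcal{U}$ (handled by splitting off the null set $\{t\in\{t_1,\dots,t_m\}\}$ whose image is a finite union of hypersurfaces), and (iii) $\det(\nabla T)\in L^1(\mathcal{U})$, which follows from the boundedness of the numerator in \eqref{eq:jacobian} together with $1/\kappa\in L^1(\mathcal{U})$ via \eqref{eq:kappa_L1loc}. None of these verifications appears in your proposal; an alternative repair would be to exhaust $\mathcal{U}$ by the sets $\{\abs{\bx}\le(1-\varepsilon)k_0(t)\}$, on which $T$ is Lipschitz, and pass to the limit using the same $L^1$ bound on $1/\kappa$, but you would still have to carry that out.

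Two smaller issues. First, before you may divide by $\operatorname{Card}(T^{-1}(\by))$ you must know it is finite for a.e.\ $\by\in T(\mathcal{U})$; the paper obtains this by first applying the change-of-variables identity with $g\equiv 1$, which gives $\int_{T(\mathcal{U})}\operatorname{Card}(T^{-1}(\by))\dd\by=\int_{\mathcal{U}}\abs{\det\nabla T}\dd(\bx,t)<\infty$. Your phrase ``$\operatorname{Card}\ge 1$ on $\mathcal{U}$'' addresses only the lower bound. Second, your closing sanity check is inaccurate: the integrand of \eqref{eq:reconstruction} is \emph{not} locally bounded near $\abs{\bx}=k_0(t)$. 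Both $\tilde\ktran u_t$ and $\abs{\det\nabla T}$ carry a factor $1/\kappa$, while the numerator supplies only one factor of $\kappa$, so the integrand behaves like $1/\kappa$ there; it is integrable by \autoref{thm:kappa}, but not bounded. The conclusion (integrability) survives, but the stated reason does not.
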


\begin{proof}
  We note that
  \begin{equation*}\label{eq:2kmax}
    \abs{T(\bx,t)} \le 2 \sup\{k_0(t): t\in[0,L]\}
  \end{equation*}
  for all $(\bx,t)\in\mathcal{U}$, cf.\ \autoref{rem:max-cover}.	As $k_0'$ and therefore $k_0$ is bounded, the set $\mathcal{Y}$ is bounded. Therefore, $f_{\mathcal{Y}}(\br)$ is finite for every $\br\in\R^d$. We will prove the theorem through usage of the change of variables formula in \autoref{lem:substitution} with $\Omega=\R^d$, $\mathcal{E}=\mathcal{U}$ and $v=\mathbf{1}_{T(\mathcal{U})}$.
  To this end, we need to show that $T$ from \eqref{eq:T} fulfills the prerequisites of \autoref{lem:substitution}.
  By assumption, $T$ has partial derivatives a.e.\ on $\mathcal{U}$, and we set $T$ to zero on $\R^d\setminus\mathcal{U}$ to obtain this property on whole $\Omega=\R^d$. According to \cite[Lem.\ 7.25]{Rud87}, differentiable maps from $\R^d$ into $\R^d$ have the Luzin N property. Now let $\mathcal{L}$ denote the $d$-dimensional Lebesgue measure and consider $E\subset\mathcal{U}$ with $\mathcal{L}(E)=0$. Let $D = \{(\bx,t) \in \mathcal{U} : t \in\{0,t_1,\ldots,t_m,L\}\}$ be the set where $T$ might not be $C^1$. By decomposing $E=(E\cap D)\cup (E\cap D^\mathrm{c})$ with $D^\mathrm{c}$ being the complement of $D$ in $\mathcal{U}$, we obtain
	\begin{align*}
	\mathcal{L}(T(E))&=\mathcal{L}(T((E\cap D)\cup (E\cap D^\mathrm{c})))=\mathcal{L}(T(E\cap D)\cup T(E\cap D^\mathrm{c}))\\
	&\leq\mathcal{L}(T(E\cap D)) +\mathcal{L}(T(E\cap D^\mathrm{c}))\leq\mathcal{L}(T(D)) +\mathcal{L}(T(E\cap D^\mathrm{c}))=0
	\end{align*}
	as $T(D)$ is a finite union of hypersurfaces, recall \eqref{eq:hemisphere}, and $T$ is $C^1$ on $D^\mathrm{c}$.
  Therefore, $T$ has the Luzin N property.
	
	Next we show that the left-hand side of \eqref{eq:2functions} is integrable, which is equivalent to $\det(\nabla T)\in L^1(\mathcal{U})$. For almost every $t \in [0,L]$ the Jacobian matrix of $T$ is given by
	\begin{equation*}
	\nabla T
		= \begin{pmatrix} \frac{\partial T}{\partial x_1} & \cdots & \frac{\partial T}{\partial x_{d-1}} & \frac{\partial T}{\partial t} \end{pmatrix},
	\end{equation*}
	where
	\begin{align*}
	\frac{\partial T}{\partial k_i}
		&= R \frac{\partial \bh}{\partial x_i} = R \left( \be_i - \frac{x_i}{\kappa} \be_d \right), \\
	\frac{\partial T}{\partial t}
		&= R \frac{\partial \bh}{\partial t} + R'\bh - \left( k_0 R \bs \right)' = \frac{k_0k_0'}{\kappa} R \be_d + R'\bh - \left( k_0 R \bs \right)'
	\end{align*}
  and $\be_i$ denotes the $i$-th unit vector in $\R^d$.
	Therefore, its determinant can be expressed as
	\begin{align*}
		\det \left( \nabla T\right) = \det \left( R^\top \nabla T\right) = 
		\det \left(
		\begin{array}{c|c}
			I_{d-1} & \multirow{2}{*}{$\frac{k_0k_0'}{ \kappa} \be_d + \bv$} \\ 
			-\bx^\top/\kappa & 
		\end{array}
		\right) 	=
		\frac{k_0k_0'}{ \kappa} +
		\det \left(
		\begin{array}{c|c}
			I_{d-1} & \multirow{2}{*}{$\bv$} \\ 
			-\bx^\top/\kappa & 
		\end{array}
		\right),
	\end{align*}
	where $I_{d-1}$ is the identity matrix of size $d-1$ and $\bv = R^\top (R'\bh - \left( k_0 R \bs \right)')$. For the determinant of a $2 \times 2$ block matrix with invertible upper left block we recall that
	\begin{align*}
	\det \begin{pmatrix} A & B \\ C & D 	\end{pmatrix}
	= \det \left( \begin{pmatrix} A & 0 \\ C & I 	\end{pmatrix} \begin{pmatrix} I & A^{-1}B \\ 0 & D- CA^{-1}B 	\end{pmatrix} \right)
	= \det(A) \det(D- CA^{-1}B).
	\end{align*}
	It follows that
	\begin{align*}
		\det \left(
		\begin{array}{c|c}
			I_{d-1} & \multirow{2}{*}{$\bv$} \\ 
			-\bx^\top/\kappa & 
		\end{array}
		\right)
		=
		v_d - \frac{\bx^\top \overline{\bv}}{- \kappa} = \frac{\bh \cdot \bv}{ \kappa}.
	\end{align*}
	In total the Jacobian determinant equals
	\begin{equation*}
	\det \left( \nabla T\right) = \frac{k_0k_0' + \bh \cdot \bv}{ \kappa}.
	\end{equation*}		
	Due to the stated assumptions on $k_0$, $R$ and $\bs$ the numerator is bounded. Therefore, the determinant is integrable on $\mathcal{U}$, if $1/\kappa$ is. Recalling \eqref{eq:kappa_L1loc}, we see that
	\begin{align*}
		\norm{\frac{1}{\kappa}}_{L^1(\mathcal{U})}
			= \int_{\mathcal{U}} \frac{\dd (\bx, t) }{\abs{\kappa}} 
			&\le \abs{\S^{d-2}} \int_0^L k_0(t)^{d-\frac52} \int_0^{k_0(t)} \frac{\dd \rho}{\sqrt{\abs{k_0(t) - \rho}}} \dd t
			\\&
      = 2 \abs{\S^{d-2}} \int_0^L k_0(t)^{d-2} \dd t,
	\end{align*}
  which is finite since $k_0$ is bounded.
	We conclude that the assumptions of \autoref{lem:substitution} are fulfilled and in particular \eqref{eq:substitution} with $\mathcal{E}=\mathcal{U}$ and $v=\mathbf{1}_{T(\mathcal{U})}$ yields
	\begin{equation*}
	\int_{T(\mathcal{U})}\operatorname{Card}(T^{-1}(\by))\dd \by=\int_\mathcal{U} \abs{\det(\nabla T(\bx,t))} \dd(\bx,t).
	\end{equation*}
	Therefore, $\operatorname{Card}(T^{-1}(\by))<\infty$ for a.e.\ $\by \in T(\mathcal{U})$. As $\operatorname{Card}(T^{-1}(\cdot))>0$ on $T(\mathcal{U})$, we may then write
	\begin{equation*}
	f_{\mathcal{Y}}(\br)=(2\pi)^{-\frac{d}{2}}\int_{T(\mathcal{U})}\e^{\i\by\cdot\br}\ktran f(\by)\frac{\operatorname{Card}(T^{-1}(\by))}{\operatorname{Card}(T^{-1}(\by))} \dd \by.
	\end{equation*}
	Invoking \autoref{lem:substitution} again, where we now integrate the function $$v(\by) =\begin{cases} \e^{\i \by \cdot \br} \ktran f(\by)/\operatorname{Card}(T^{-1}(\by)), & \by\in T(\mathcal{U}), \\0, &\text{otherwise},\end{cases}$$ gives
	\begin{equation} \label{eq:approx2g}
		f_{\mathcal{Y}}(\br)
	= (2\pi)^{-\frac{d}{2}} \int_{\mathcal{U}} \e^{\i T(\bx,t) \cdot \br} \ktran f(T(\bx,t)) \frac{\abs{\det (\nabla T(\bx,t))}}{\operatorname{Card}(T^{-1}(T(\bx,t)))}  \, \dd (\bx,t).
	\end{equation}
	By \eqref{eq:potential}, we can express $\ktran f$ in terms of the measurements and \eqref{eq:reconstruction} is then established.
	
	It remains to verify \eqref{eq:jacobian}. We have already shown that
	\begin{equation*}
		\det \left( \nabla T\right) = \frac{k_0k_0' + \bh \cdot \left( R^\top (R'\bh - \left( k_0 R \bs \right)' ) \right)}{ \kappa}.
	\end{equation*}
	In order to finish the calculation, we only have to observe that the matrix $R^\top R'$ is skew-symmetric, which can be seen by differentiating the identity $R^\top R = I_d.$ Therefore, $\by \cdot R^\top R' \by = 0$ for all $\by \in \R^d$.
\end{proof}

\subsection{Non-absorbing object} \label{sec:non-absorbing}

In many situations, such as optical diffraction tomography of biological cells, the refractive index $n$ and therefore the normalized scattering potential $f$ are assumed to be real-valued, which means that absorption is neglected, cf.\ \cite{BeiQue22,MulSchGuc15}.
Then the Fourier transform of $f\colon \R^d\to\R$ is conjugate symmetric, 
\begin{equation} \label{eq:Friedel}
\ktran f(\by)
=
\overline{\ktran f(-\by)}
,\qquad \forall\by\in\R^d,
\end{equation}
also known as Friedel's law,
where $\overline{z}$ denotes the complex conjugate of $z\in\C$. The reconstruction $f_{\mathcal{Y}}$ does not account for this symmetry. It might even happen that $f_{\mathcal{Y}}$ has a non-vanishing imaginary part despite the fact that $f$ is real-valued.

By \eqref{eq:Friedel}, we obtain from the measurements the Fourier transform $\ktran f$ not only on $\mathcal Y$, but also on $-\mathcal{Y} = \{-\by : \by\in\mathcal Y\}$,
and therefore the extended Fourier coverage
\begin{equation} \label{eq:Ysym}
  \mathcal{Y}_\sym \coloneqq 
  \mathcal{Y}\cup (-\mathcal{Y}).
\end{equation}
Analogously to \autoref{rem:bp-error}, the backpropagation $f_{\mathcal Y_\sym}$ minimizes $\norm{g}_{L^2(\R^d)}$ among all real-valued functions $g\in L^2(\R^d)$ that satisfy $\ktran g = \ktran f$ on $\mathcal{Y}$. In order to provide a backpropagation formula for $f_{\mathcal{Y}_\sym}$ similar to \eqref{eq:reconstruction}, we set 
\begin{equation*}
  \mathcal{U}_\sym\coloneqq\left\{(\bx,t)\in\R^d: \abs{\bx} < k_0(\abs{t}),-L\leq t\leq L\right\},
\end{equation*}
and we replace the coordinate transformation $T$ of \eqref{eq:T} by
\begin{equation} \label{eq:Tsym}
  T_\sym\colon \mathcal{U}_\sym \to \mathbb{R}^d, \quad T_\sym(\bx,t) \coloneqq \sgn(t) \, T(\bx,\abs{t}),
\end{equation}
with the sign function
\begin{equation} \label{eq:sgn}
  \sgn(t)
  \coloneqq
  \begin{cases}
    \frac{t}{\abs{t}}, & t\neq0,\\
    0, & t=0.
  \end{cases}
\end{equation}
Here, a negative $t$ is associated with the reflected points $- T_\sym(\bx,-t)$ in Fourier space.

\begin{theorem}[Filtered backpropagation with non-absorbing object] \label{thm:reconstruction2}
  Let the assumptions of \autoref{thm:reconstruction} be satisfied. In addition, assume that $f$ is real-valued.
  Then
  \begin{equation} \label{eq:reconstruction2}
    f_{\mathcal{Y}_\sym}(\br)
    = 4(2\pi)^{-\frac{d+1}{2}} 
    \operatorname{Re} \left( \int_{\mathcal{U}}
    \frac{\kappa\, \e^{\i T(\bx,t) \cdot (\br+\bd)} \abs{\det(\nabla T(\bx,t))} \tilde\ktran u_t(\bx,\rM)}
    {k_0(t)^2\, \i \e^{\i \kappa \rM}\, \operatorname{Card}(T_\sym^{-1}( T(\bx,t)))}
    \dd (\bx,t) \right),
  \end{equation}
  where $\operatorname{Re}$ denotes the real part, $u_t$ is given in \eqref{eq:ut}, and $\det(\nabla T)$ in \eqref{eq:jacobian}.
\end{theorem}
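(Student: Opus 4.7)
The plan is to mimic the proof of \autoref{thm:reconstruction} on the larger parameter domain $\mathcal{U}_\sym$ with the symmetrized map $T_\sym$, and then exploit Friedel's law \eqref{eq:Friedel} to fold the contribution of the half $\{t<0\}$ back onto $\mathcal{U}$, producing the prefactor $2\operatorname{Re}$. Observe that $\mathcal{Y}_\sym = T_\sym(\mathcal{U}_\sym)$ by definition.

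First I would verify that $T_\sym$ fulfils the prerequisites of the change-of-variables formula \cite[Thm.~5.8.30]{Bog07}. By \eqref{eq:Tsym}, on each half $\pm t>0$ the map equals $\pm T(\bx,\abs{t})$, so piecewise $C^1$ regularity, the Luzin N property, and integrability of $\det(\nabla T_\sym)$ all carry over verbatim from the proof of \autoref{thm:reconstruction}; the additional discontinuity at $t=0$ contributes only one more hypersurface to the exceptional set, which is still Lebesgue null. Moreover, $\abs{\det\nabla T_\sym(\bx,t)} = \abs{\det\nabla T(\bx,\abs{t})}$ for $t\neq 0$. Applying the change-of-variables formula then gives
\begin{equation*}
f_{\mathcal{Y}_\sym}(\br) = (2\pi)^{-d/2} \int_{\mathcal{U}_\sym} \e^{\i T_\sym(\bx,t)\cdot\br}\,\hat f(T_\sym(\bx,t))\,\frac{\abs{\det\nabla T_\sym(\bx,t)}}{\operatorname{Card}(T_\sym^{-1}(T_\sym(\bx,t)))}\dd(\bx,t).
\end{equation*}

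Next I would split the integral into the halves $t>0$ and $t<0$. On the latter, the substitution $s=-t$ together with $T_\sym(\bx,-s) = -T(\bx,s)$, Friedel's law, and $\br\in\R^d$ yields
\begin{equation*}
\e^{\i T_\sym(\bx,-s)\cdot\br}\,\hat f(T_\sym(\bx,-s)) = \overline{\e^{\i T(\bx,s)\cdot\br}\,\hat f(T(\bx,s))}.
\end{equation*}
The involution $(\bx,t)\mapsto(\bx,-t)$ of $\mathcal{U}_\sym$ is a measure-preserving bijection that maps $T_\sym^{-1}(\by)$ onto $T_\sym^{-1}(-\by)$, so the Banach indicatrices at $\by$ and $-\by$ coincide. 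Combining these two observations, the two half-integrals share a common denominator and, via $z+\overline{z}=2\operatorname{Re}z$, collapse into
\begin{equation*}
f_{\mathcal{Y}_\sym}(\br) = 2(2\pi)^{-d/2}\,\operatorname{Re}\int_{\mathcal{U}} \e^{\i T(\bx,t)\cdot\br}\,\hat f(T(\bx,t))\,\frac{\abs{\det\nabla T(\bx,t)}}{\operatorname{Card}(T_\sym^{-1}(T(\bx,t)))}\dd(\bx,t).
\end{equation*}

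Finally I would substitute $\hat f\circ T$ from \eqref{eq:potential}, which contributes $\e^{\i T\cdot\bd}$ inside the exponential, the factor $\kappa/(\i\e^{\i\kappa\rM}k_0^2)$ outside, and the scalar $\sqrt{2/\pi}=2(2\pi)^{-1/2}$. Collecting constants yields the stated prefactor $4(2\pi)^{-(d+1)/2}$ and thereby \eqref{eq:reconstruction2}. The main obstacle I anticipate lies not in the analytic estimates, which are inherited from \autoref{thm:reconstruction}, but in the bookkeeping of the Banach indicatrix of $T_\sym$: the doubling factor and the real part only emerge cleanly thanks to the identity $\operatorname{Card}(T_\sym^{-1}(\by)) = \operatorname{Card}(T_\sym^{-1}(-\by))$. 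Without this symmetry, one would be left with two distinct denominators in the two half-integrals and no reduction to $\operatorname{Re}$ would be possible.
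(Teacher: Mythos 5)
Your proposal is correct and follows essentially the same route as the paper's proof: apply the change-of-variables argument of \autoref{thm:reconstruction} to $T_\sym$ on $\mathcal{U}_\sym$ (noting the extra non-smoothness point at $t=0$), split the domain, use $T_\sym(\bx,-t)=-T_\sym(\bx,t)$ together with the symmetry $\operatorname{Card}(T_\sym^{-1}(\by))=\operatorname{Card}(T_\sym^{-1}(-\by))$ and Friedel's law to identify the $t<0$ half as the complex conjugate of the $t>0$ half, and then insert \eqref{eq:potential}. The bookkeeping of the constants ($\sqrt{2/\pi}=2(2\pi)^{-1/2}$ turning $2(2\pi)^{-d/2}$ into $4(2\pi)^{-(d+1)/2}$) is also correct.
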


\begin{proof}
  Since $T_\sym$ satisfies the same assumptions as $T$ in the proof of \autoref{thm:reconstruction} with the points of possible non-smoothness $\{-t_m, \dots, -t_1, 0, t_1, \dots, t_m\}$,
  we obtain analogously to the derivation of \eqref{eq:approx2g} in the proof of \autoref{thm:reconstruction} that
  \begin{equation*} 
  f_{\mathcal{Y}_\sym}(\br)
    = (2\pi)^{-\frac{d}{2}} \int_{ \mathcal{U}_\sym} \e^{\i T_\sym(\bx,t) \cdot \br} \ktran f(T_\sym(\bx,t)) \frac{\abs{\det (\nabla T_\sym(\bx,t))}}{\operatorname{Card}(T_\sym^{-1}(T_\sym(\bx,t)))} \dd (\bx,t).
  \end{equation*}
  Splitting up the domain of integration $\mathcal{U}_\sym = \mathcal{U} \cup \{(\bx,t): \abs\bx<k_0(\abs t),t\in[-L,0]\}$, we obtain
  \begin{align*}
    f_{\mathcal{Y}_\sym}(\br)
    ={}& (2\pi)^{-\frac{d}{2}} \int_{{ \mathcal{U}}} \e^{\i T(\bx,t) \cdot \br} \ktran f( T(\bx,t)) \frac{\abs{\det (\nabla T(\bx,t))}}{\operatorname{Card}(T_\sym^{-1}( T(\bx,t)))}  \dd (\bx,t)
    \\
    & + (2\pi)^{-\frac{d}{2}} \int_{{ \mathcal{U}}} \e^{-\i T(\bx,t) \cdot \br} \ktran f(- T(\bx,t)) \frac{\abs{\det (\nabla T(\bx,t))}}{\operatorname{Card}(T_\sym^{-1}(- T(\bx,t)))}  \dd (\bx,t),
  \end{align*}
  where we have used the substitution $t\mapsto -t$ and the property $T_\sym(\bx,-t) = - T_\sym(\bx,t)$ in the second integral.
  This property also implies that $T_\sym^{-1}(\by)$ is isomorphic to $T_\sym^{-1}(-\by)$ and therefore $\operatorname{Card}(T_\sym^{-1}( \by))
  = \operatorname{Card}(T_\sym^{-1}(-\by))$ for every $\by$. It now follows from \eqref{eq:Friedel} that the second integral is the complex conjugate of the first so that
  \begin{align*}
  	f_{\mathcal{Y}_\sym}(\br)
    ={}& 2(2\pi)^{-\frac{d}{2}} \operatorname{Re} \left( \int_{{ \mathcal{U}}} \e^{\i T(\bx,t) \cdot \br} \ktran f( T(\bx,t)) \frac{\abs{\det (\nabla T(\bx,t))}}{\operatorname{Card}(T_\sym^{-1}( T(\bx,t)))}  \dd (\bx,t) \right).
  \end{align*}
  Using the Fourier diffraction theorem in \eqref{eq:potential} finishes the proof.
\end{proof}

\begin{remark}[Comparison of the backpropagation formulae]
  The filtered backpropagation formula with symmetrization \eqref{eq:reconstruction2}
  differs from \eqref{eq:reconstruction} in
  that we take twice the real part and we compute the Banach indicatrix of $T_\sym$.
  For real-valued $f$, we can compare the two reconstructions $f_{\mathcal{Y}_\sym}$ and $f_{\mathcal{Y}}$.
  By \autoref{rem:bp-error}, we always have $\|f-f_{\mathcal{Y}_\sym}\|_{L^2(\R^d)}\le\|f-f_{\mathcal{Y}}\|_{L^2(\R^d)}$.
  If $\mathcal Y$ is point symmetric with respect to the origin, i.e.\ $\mathcal Y = -\mathcal Y$, then both yield the same result.
  Otherwise, $f_{\mathcal{Y}}$ might have a non-vanishing imaginary part, but even considering only the real part is not ideal.
  In the extreme case where $\mathcal Y \cap (-\mathcal Y)$ is a null set, as in \autoref{fig:2d-max-cover-half},
  we obtain $f_{\mathcal{Y}_\sym} = 2\operatorname{Re}(f_{\mathcal{Y}})$,
  so the reconstruction with \eqref{eq:reconstruction2} is considerably better.
\end{remark}

\subsection{Filtered backpropagation with multi-dimensional parameter set}
\label{sec:d-parameter}
For 3D angle scanning, cf.\ \autoref{sec:incidence}, one option is to move the incidence along a two-dimensional set.
In order to handle such an experiment, we extend the filtered backpropagation of \autoref{thm:reconstruction} by making $t\in[0,L]$ a multi-dimensional parameter $\bt\in\mathcal{A}\subset\R^{q+1}$ with $q\in\N$.
We substitute $\bx\in\mathcal{B}^{d-1}_{k_0}$ by 
$\bv = \frac{\bx}{\abs\bx} \arcsin\frac{\abs\bx}{k_0}\in\mathcal{B}^{d-1}_{\pi/2}$ if $\bx\neq\bo$.
Then we have $\bx = k_0 \frac{\bv}{\abs{\bv}} \sin\abs\bv$
and 
$\kappa
= k_0 \cos\abs\bv$.
Accordingly, we replace the transformation $T$ in \eqref{eq:T} by
\begin{equation*}
  U \colon \mathcal{B}^{d-1}_{\pi/2} \times \mathcal{A}\to\R^d,\quad
  U(\bv,\bt)\coloneqq
  k_0(\bt)\, R(\bt) \left( \begin{psmallmatrix}
    \bv \frac{\sin\abs\bv}{\abs\bv}\\ \cos\abs\bv
  \end{psmallmatrix} - \bs(\bt)\right).
\end{equation*}
This parameter change makes $U$ Lipschitz, as opposed to $T$.
For a set $ S\subset \R^{d+q}$,
we define  $\operatorname{diam}(S) \coloneqq \sup\{\abs{\bu-\bv} : \bu,\bv\in S\}$
and the $q$-dimensional Hausdorff measure
$$
H^q(S)
\coloneqq \sup_{\delta>0} \left(\inf \left\{ \sum_{i=1}^{\infty} \frac{\pi^{q/2} \operatorname{diam}(B_i)^q}{\Gamma(\frac q2 +1)\, 2^q} : \bigcup_{i=1}^{\infty} B_{i} \supset S,\, B_i\subset\R^{d+q},\, \operatorname{diam}(B_i) < \delta \right\} \right).
$$ 
\begin{theorem} \label{thm:reconstruction-nd}
  Let $\mathcal{A}\subset\R^{q+1}$ be a bounded, open set and each of the maps $R\colon \mathcal{A}\to SO(d)$, $\bs\colon \mathcal{A}\to\S^{d-1}$, $\bd\colon \mathcal{A}\to\R^d$, and $k_0\colon \mathcal{A}\to(0,+\infty)$ be $C^1$ with bounded partial derivatives.
  Further let $f\in L^1(\R^d)$ have compact support,
  $u_\bt$ be defined as in \eqref{eq:ut} and $\rM \in I^+(f \circ \Psi_\bt)$ for all $\bt\in\mathcal{A}$.
  Denote by $|\nabla U|$ the square root of the sum of the squares of the determinants of the $d\times d$ minors of the Jacobian of $U$.
  With $\mathcal{W} \coloneqq \{ \bz \in \R^d : H^q(U^{-1}(\bz)) > 0\}$, we have for all $\br\in\R^d$ 
  \begin{equation*} \label{eq:bpp_nd}
    f_{\mathcal{W}}(\br) =
    2(2\pi)^{-\frac{d+1}{2}}
    \int_{\mathcal{B}^{d-1}_{\pi/2} \times \mathcal{A}} \frac{\cos\abs\bv\, \e^{\i U(\bv,\bt)\cdot(\br+\bd(\bt))} \tilde\ktran u_\bt(\bv\frac{\sin\abs\bv}{\abs\bv},\rM) \abs{\nabla U(\bv,\bt)}}{\i \e^{\i k_0(\bt) \rM \cos\abs\bv}\, k_0(\bt)\, H^q(U^{-1}(U(\bv,\bt)))} 
    \dd(\bv,\bt).
  \end{equation*}
\end{theorem}
\begin{proof}
  We first show that $U$ is Lipschitz.
  All partial derivatives of $U$ with respect to $\bt$ are bounded by assumption.
  Since $\abs\bv^{-1}{\sin\abs\bv} \le 1$ for all $\bv\in\R^{d-1}\setminus\{\bo\}$,
  we see that 
  \begin{equation*}
    \frac{\partial U(\bv,\bt)}{\partial v_j} 
    =
    k_0 R(\bt) \begin{pmatrix}
      \frac{\sin\abs\bv}{\abs\bv} \be^j + \bv \frac{v_j}{\abs\bv} \left(\frac{\cos\abs\bv}{\abs\bv} - \frac{\sin\abs\bv}{\abs\bv^2}\right)\\
      - \frac{\bv}{\abs\bv}\, \sin\abs\bv
    \end{pmatrix}
    ,\qquad \forall j=1,\dots,d-1,
  \end{equation*}
  is uniformly bounded,
  which implies that $U$ is Lipschitz.
  The coarea formula \cite[Thm.\ 3.2.12]{Fed96}, see also \cite{MalSwaZie02},
  states for any $g\in L^1(\B^{d-1}_{\pi/2} \times \mathcal{A})$ and Lipschitz-continuous $U$ that
  \begin{equation} \label{eq:coarea}
    \int_{\B^{d-1}_{\pi/2} \times \mathcal{A}} g(\bv,\bt) \abs{\nabla U(\bv,\bt)} \dd(\bv,\bt)
    =
    \int_{\R^d} \int_{U^{-1}(\bz)} g(\bv,\bt) \dd H^{q}(\bv,\bt) \dd \bz.
  \end{equation}
  Plugging into \eqref{eq:coarea} the indicator function of some $A\subset \mathcal{B}^{d-1}_{\pi/2} \times \mathcal{A}$ with $H^q(U^{-1}(U(A)))=0$
  yields $\int_A \abs{\nabla U(\bv,\bt)} \dd(\bv,\bt)=0$, and therefore $\abs{\nabla U}$ vanishes a.e.\ on $A$.
  Hence \eqref{eq:coarea} remains valid when the left integral is restricted to $S_0 \coloneqq \supp(H^q(U^{-1}\circ U))$.
  
  Let $\varepsilon>0$. We define the set 
  $
  S_\varepsilon \coloneqq \{ (\bv,\bt) \in \B^{d-1}_{\pi/2}\times\mathcal{A} : {H^q(U^{-1}(U(\bv,\bt)))} > \varepsilon \}
  $
  and the function
  $$
  g_\varepsilon(\bv,\bt)
  \coloneqq
  \begin{dcases}
    \frac{1}{H^q(U^{-1}(U(\bv,\bt)))}, & (\bv,\bt)\in S_\varepsilon,\\
    0, & \text{otherwise},
  \end{dcases}
  $$
  which is integrable on $\B^{d-1}_{\pi/2}\times\mathcal{A}$.
  Inserting $g_\varepsilon$ into the coarea formula \eqref{eq:coarea} yields
  \begin{align*} 
    \int_{\mathcal{B}^{d-1}_{\pi/2} \times \mathcal{A}} g_\varepsilon(\bv,\bt) {\abs{\nabla U(\bv,\bt)}} \dd(\bv,\bt)
    &=
    \int_{\R^d} \int_{U^{-1}(\bz)} g_\varepsilon(\bv,\bt) \dd H^{q}(\bv,\bt) \dd \bz\\&
    \le
    \int_{\mathcal{W}} \frac{1}{H^q(U^{-1}(\bz))} \int_{U^{-1}(\bz)} \dd H^{q}(\bv,\bt) \dd \bz
    \le
    \abs{\B^d_{2k_\mathrm{max}}}
  \end{align*} 
  because $\mathcal{W} \subset \B^d_{2k_\mathrm{max}}$ by \autoref{rem:max-cover}.
  Since the right-hand side is bounded independently of $\varepsilon$, we see that ${\abs{\nabla U}} / {H^q(U^{-1}\circ U)}$ is integrable on
  $\bigcup_{\varepsilon>0} S_\varepsilon = S_0$.
  
  Let $\br\in\R^d$. Then
  $$
  a_\varepsilon(\bv,\bt) \coloneqq {\e^{\i U(\bv,\bt)\cdot\br} \ktran f(U(\bv,\bt))} g_\varepsilon(\bv,\bt),
  $$ 
  is in $L^1(\B^{d-1}_{\pi/2} \times \mathcal{A})$ because $\mathcal Ff$ is bounded.
  Defining $\mathcal{W}_\varepsilon \coloneqq U^{-1}(S_\varepsilon)$
  and inserting $a_\varepsilon$ into \eqref{eq:coarea}, we obtain
  \begin{align*}
    \int_{S_0} a_\varepsilon(\bv,\bt) \abs{\nabla U(\bv,\bt)} \dd(\bv,\bt)
    &=
    \int_{\mathcal{W}} \frac{\e^{\i \bz\cdot\br} \ktran f(\bz)}{H^q(U^{-1}(\bz))} \mathbf{1}_{\mathcal{W}_\varepsilon}(\bz) \int_{U^{-1}(\bz)}  \dd H^q(\bv,\bt) \dd \bz
    \\
    &=
    \int_{\mathcal{W}} {\e^{\i \bz\cdot\br} \ktran f(\bz)} \mathbf{1}_{\mathcal{W}_\varepsilon}(\bz) \dd\bz.
  \end{align*}
  The integrand on the left has the integrable upper bound $\abs{\ktran f} {\abs{\nabla U}} / {H^q(U^{-1}\circ U)}$, and
  the integrand on the right is bounded by $\abs{\ktran f}$.
  Applying Lebesgue's dominated convergence theorem for $\varepsilon\to0$ on both sides yields
  \begin{equation*}
    \int_{S_0} \frac{\e^{\i U(\bv,\bt)\cdot\br} \ktran f(U(\bv,\bt))}{H^q(U^{-1}(U(\bv,\bt)))} \abs{\nabla U(\bv,\bt)} \dd(\bv,\bt)
    =
    \int_{\mathcal{W}} {\e^{\i \bz\cdot\br} \ktran f(\bz)} \dd\bz.
  \end{equation*}
  Together with \eqref{eq:recon_gen} and $\kappa = k_0(\bt) \cos\abs\bv$, this shows the assertion.
\end{proof}

\begin{remark}
The backpropagation formula of \autoref{thm:reconstruction}, if all the parameters are $C^1$, can be seen as a special case of \autoref{thm:reconstruction-nd}.
We note that for $q=0$, the set $\mathcal{W}$ coincides with $\mathcal{Y}$, while for $q\ge1$ we only know the inclusion $\mathcal{W}\subset\mathcal{Y}$.
Furthermore, \autoref{thm:reconstruction-nd} requires a different parameterization of the Fourier cover $\mathcal{Y}$.
\end{remark}

\subsection{Special cases}\label{sec:examples}

Below we give a few examples of the filtered backpropagation formulae provided by \autoref{thm:reconstruction} and \autoref{thm:reconstruction2}.

\begin{example}[Object rotation in 2D]\label{ex:2d}
Consider the 2D transmission setup with incidence direction $\bs = (0,1)^\top$, measurement line $r_2 = \rM \in I^+$ and fixed wave number $k_0$. Assuming that the object makes a full turn according to 
\begin{equation*}
	R(t) = \begin{pmatrix} \cos t & - \sin t \\ \sin t & \phantom{-}\cos t \end{pmatrix},\qquad t \in [0,2\pi],
\end{equation*}
the filtered backpropagation formula \eqref{eq:reconstruction} reduces to the well-known
\begin{equation}\label{eq:2d-transmission-bp}
	f_{\mathcal{B}_{\sqrt{2}k_0}}(\br) = \frac{-\i}{k_0} (2\pi)^{-3/2} \int_0^{2\pi} \int_{-k_0}^{k_0} \e^{\i T(x,t) \cdot \br -\i\kappa \rM} \mathcal{F}_1u_t(x,\rM) \abs{x} \dd x \dd t,
  \qquad\text{for all } \br \in \mathbb{R}^2.
\end{equation}
See also \cite{Dev82,KakSla01,Sla85}. The Fourier coverage of this experiment is a disk of radius $\sqrt{2}k_0$ as depicted in \autoref{fig:2d-cover-half-2}.

Changing the incidence direction to $\bs = (1,0)^\top$ leads to a disk of radius $2k_0$, cf.\ \autoref{fig:2d-max-cover-full}. This is the largest possible coverage for the given wave number, as discussed in \autoref{rem:max-cover}. The resulting reconstruction
\begin{equation*}
	f_{\mathcal{B}_{2k_0}}(\br) = \frac{-2\i}{k_0} (2\pi)^{-3/2} \int_0^{2\pi} \int_{-k_0}^{k_0} \e^{\i T(x,t) \cdot \br -\i\kappa \rM} \mathcal{F}_1u_t(x,\rM) \kappa \dd x \dd t,
  \qquad\text{for all } \br \in \mathbb{R}^2,
\end{equation*}
has a smaller $L^2$ approximation error than the one given in \eqref{eq:2d-transmission-bp} according to \autoref{rem:bp-error}.

If $f$ is real-valued, then a half turn of the object is actually enough to recover $f_{\mathcal{B}_{2k_0}}$. This is due to Friedel's law \eqref{eq:Friedel} and the fact that the coverage $\mathcal{Y}$ for a half turn, corresponding to $t\in[0,\pi]$, see \autoref{fig:2d-max-cover-half}, satisfies $\mathcal{Y}_\sym = \mathcal{Y} \cup (-\mathcal{Y}) = \mathcal{B}_{2k_0}$. The symmetrized backpropagation formula from \autoref{thm:reconstruction2} gives
\begin{equation*}
	f_{\mathcal{B}_{2k_0}}(\br) = \frac{-4}{k_0} (2\pi)^{-3/2} \operatorname{Re} \left( \int_0^{\pi} \int_{-k_0}^{k_0} -\i \e^{\i T(x,t) \cdot \br -\i\kappa \rM} \mathcal{F}_1u_t(x,\rM) \kappa \dd x \dd t \right) ,
  \qquad\text{for all } \br \in \mathbb{R}^2.
\end{equation*}
\end{example}

\begin{example}[2D angle scan]
\label{ex:angle+rot}
Consider an experimental setup of angle scanning as in \autoref{fig:rot_wave-2d} center, which is repeated for the object rotated by $90\,^\circ$.
With the measurement line $r_2=\rM\in I^+$ and wave number $k_0$, we set the incidence $\bs(t)=(\cos t,\sin t)$ for $t\in[0,2\pi]$ and the piecewise constant rotation $R(t)=\pm I$ if $t \gtrless \pi$.
Up to zero sets,
the Fourier coverage is the union of four disks of radius $k_0$, namely 
\[\mathcal{Y}= 
\B^2_{k_0}\!\begin{psmallmatrix}0\\k_0\end{psmallmatrix}\cup
\B^2_{k_0}\!\begin{psmallmatrix}0\\-k_0\end{psmallmatrix} \cup 
\B^2_{k_0}\!\begin{psmallmatrix}k_0\\0\end{psmallmatrix}\cup 
\B^2_{k_0}\!\begin{psmallmatrix}-k_0\\0\end{psmallmatrix}.\]
Any point in $\mathcal{Y}$ is contained either in one or in two of these disks, therefore the Banach indicatrix is given for almost every $\by\in\mathcal{Y}$ by
\begin{equation*}
\operatorname{Card}(T^{-1}(\by))
=
\begin{cases}
  2, & \text{if }
  k_0-\sqrt{k_0^2-y_1^2} 
  <\abs{y_2}
  <\sqrt{k_0^2-(\abs{y_1}-k_0)^2},\\
  1, & \text{otherwise},
\end{cases}
\end{equation*}
see \autoref{fig:angle+rot} left.
The backpropagation formula \eqref{eq:reconstruction} becomes
\begin{equation*}
f_{\mathcal{Y}}(\br) = \frac{-2\i(2\pi)^{-\frac{3}{2}}}{k_0}  \int_{0}^{2\pi} \int_{-k_0}^{k_0} \frac{\e^{\i T(x ,t) \cdot \br - \i \kappa(x) \rM}\, \ktran_1 u_t(x,\rM) }{\operatorname{Card}(T^{-1}(T(x,t)))} \abs{\kappa(x) \cos t-x\sin t} \dd x \dd t.
\end{equation*}
\end{example}

\begin{figure}[!ht]\centering
  \begin{tikzpicture}[scale=.8]
    \fill[red,opacity=.3] (2,0) arc (0:360:1cm);
    \fill[red,opacity=.3] (0,0) arc (0:360:1cm);
    \fill[red,opacity=.3] (0,0) arc (180:90:1cm) -- (1,1) arc (0:180:1cm) -- (-1,1) arc (90:0:1cm);
    \fill[red,opacity=.3] (0,0) arc (180:270:1cm) -- (1,-1) arc (0:-180:1cm) -- (-1,-1) arc (-90:0:1cm);
    
    \fill[blue,opacity=.3] (0,0) arc (-90:0:1cm) -- (1,1) arc (90:180:1cm);
    \fill[blue,opacity=.3] (0,0) arc (0:90:1cm) -- (-1,1) arc (180:270:1cm);
    \fill[blue,opacity=.3] (0,0) arc (90:180:1cm) -- (-1,-1) arc (-90:0:1cm);
    \fill[blue,opacity=.3] (0,0) arc (180:270:1cm) -- (1,-1) arc (0:90:1cm);

    \node at (.3,1.4) {1};
    \node at (.5,.5) {2};
    
    \draw[->] (0,-2.3) -- (0,2.3) node[anchor = east]{$y_2$};
    \draw[->] (-3,0) -- (3,0) node[anchor = north]{$y_1$};
    
    \draw (2,-0.08) -- (2,0.08);
    \node[anchor = north] at (2,0){$2k_0$};
    \draw (-2,-0.08) -- (-2,0.08);
    \node[anchor = north] at (-2,0){$-2k_0$};
  \end{tikzpicture}
  \qquad
  \includegraphics[trim={4cm 2cm 3.5cm 1.8cm},clip,width=0.25\textwidth]{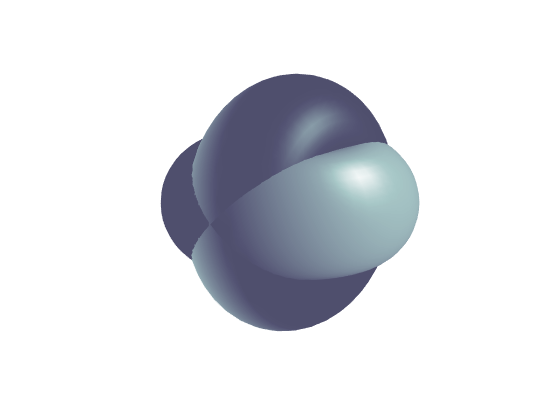}
  \caption{\emph{Left:} 2D Fourier coverage for \autoref{ex:angle+rot}. In the purple area, the Banach indicatrix is 2, in the red area it is 1. \emph{Right:} 3D Fourier coverage for \autoref{ex:3d}.}
  \label{fig:angle+rot}
\end{figure}

\begin{example}[Object rotation in 3D] \label{ex:3d}
We consider an experiment similar to \autoref{ex:2d} but in $\R^3$. The object rotates around the $r_1$-axis, the wave number $k_0$ is fixed and $\rM \in I^+$. The incidence direction $\bs = (0,0,1)^\top$ leads to Devaney's filtered backpropagation formula \cite{Dev82}. An illustration of the Fourier coverage for this setup can be found in \cite[Fig.\ 3]{KirQueRitSchSet21}.

As in $\R^2$, choosing $\bs = (0,1,0)^\top$, i.e.\ parallel to the measurement plane, yields a larger coverage, cf.\ \autoref{fig:cover3}. In contrast to the 2D setting, however, the Fourier coverage is considerably smaller than the maximal one. In particular, it suffers from the missing cone problem. In this case the missing regions around the origin can be filled, for instance, by subsequently rotating the object around the $r_2$-axis while illuminating in direction $\bs = (1,0,0)^\top$. The resulting coverage $\mathcal{Y}$ is a union of two solid horn tori, one radially symmetric about the $r_1$-axis and the other radially symmetric about the $r_2$-axis, see \autoref{fig:angle+rot} right. The filtered backpropagation formula reads
\begin{equation*}
	f_{\mathcal{Y}}(\br) = -\frac{\i}{2\pi^2 k_0} \int_{0}^{4 \pi} \int_{\mathcal{B}_{k_0}} \frac{\kappa \e^{\i T(\bx,t)\cdot \br - \i \kappa \rM} \tilde\ktran u_t(\bx,\rM)}{\operatorname{Card}\left( T^{-1}(T(\bx,t)) \right)} \dd \bx \dd t
\qquad\text{for all } \br \in \mathbb{R}^3,
\end{equation*}
where $T$ is defined according to \eqref{eq:T} with $\bs(t) = (0,1,0)^\top$ for $t \in [0,2\pi]$ and $\bs(t) = (1,0,0)^\top$ for $t \in (2\pi,4\pi]$ and the rotation matrix
\begin{align*}
	R(t) =
	\begin{dcases}
		\begin{pmatrix}	1 & 0 & 0 \\ 0 & \phantom{-}\cos t & \sin t \\ 0 & -\sin t & \cos t \end{pmatrix}, & t \in [0,2\pi], \\
		\begin{pmatrix}	\phantom{-}\cos t & 0 & \sin t \\ 0 & 1 & 0 \\ -\sin t & 0 & \cos t \end{pmatrix}, & t \in (2\pi,4\pi].
	\end{dcases}
\end{align*}
The Banach indicatrix $\operatorname{Card}\left( T^{-1}(\by)\right)$ equals $2$ if $\by$ lies in the overlap of the two solid tori, and equals $1$ otherwise. For reasons of symmetry, half a rotation of the object about each axis is actually enough to compute $f_{\mathcal{Y}}$ if $f$ is real-valued, similar to \autoref{ex:2d}.
\end{example}

\section{Numerics}

\subsection{Discretization}\label{sec:discretization}

For discretizing the filtered backpropagation formulae of Sections \ref{sec:gen-backprop} and \ref{sec:non-absorbing}, we extend the approach of \cite{KirQueRitSchSet21} to our general setting with some modifications for the Banach indicatrix.
We consider the time steps $t_n \coloneqq nL/N$ for $n=1,\dots, N$,
and quadrature points $\bx_{m} \in \B_{1}^{d-1}$ for $m =  1,\dots, M$ that lie on a uniform grid.
From \eqref{eq:reconstruction},
we obtain the \emph{discrete backpropagation}
\begin{equation}\label{eq:reconstruction_dis}
  f_{\mathcal{Y}}(\br) 
  \approx 
  {(2\pi)^{-\frac{1+d}{2}}}
  \frac{\lvert{\B^{d-1}_{k_0}}\rvert\, L}{MN}
  \sum_{m=1}^{M}
  \sum_{n=1}^{N}
  \frac{2 \kappa(\bz_{m,n}) \e^{\i T(\bz_{m,n}) \cdot \br}\, \tilde\ktran u_t(k_0\bx_m,\rM) \abs{\det \left(\nabla T(\bz_{m,n})\right)} }{k_0^2(t_n)\, \i \e^{\i \kappa(\bz_{m,n}) \rM} \operatorname{Card}(T^{-1}(T(\bz_{m,n})))},
\end{equation}
where $\bz_{m,n} \coloneqq (k_0(t_n) \bx_m, t_n)$.
For a non-absorbing object as of \autoref{thm:reconstruction2},
we approximate $f_{\mathcal{Y}_\sym}$ analogously to \eqref{eq:reconstruction_dis}, where we replace $\operatorname{Card}(T^{-1}(\cdot))$ by $\operatorname{Card}(T_\sym^{-1}(\cdot))$ 
and take twice the real part of the sum.
We evaluate $f_{\mathcal{Y}}$ on a uniform grid 
\begin{equation} \label{eq:x-grid}
\br_\bp = 2 \rM \bp,
\qquad 
\bp \in \mathcal{I}_P^d \coloneqq \{ -\tfrac P2,\,\dots,\,\tfrac P2-1 \}^d,
\end{equation}
for $P\in\N$.
The \emph{nonuniform discrete Fourier transform} (NDFT) $\mathbf A\colon \C^{P^d}\to\C^{J}$
of a vector $\mathbf f \in \C^{P^d}$ at points $\by_j\in\R^d$, $j=1,\dots,J$, 
and its adjoint $\mathbf A^*\colon \C^{J}\to\C^{P^d}$ of $\ba\in\C^j$
are defined by 
\[
(\mathbf A\mathbf f)_j
\coloneqq
\sum_{\bp \in \mathcal{I}_P^d} 
\mathbf f_{\bp}\, \e^{\i \by_{j} \cdot \bp},
\qquad 
(\mathbf A^*\ba)_\bp
\coloneqq
\sum_{j=1}^J 
\mathbf a_{j}\, \e^{\i \by_{j} \cdot \bp}.
\]
With appropriate scaling and the enumeration $\by_{j(m,n)} = T(\bz_{m,n})$, the evaluation of \eqref{eq:reconstruction_dis} 
corresponds to an adjoint NDFT,
which can be computed efficiently in $\mathcal{O}(P^d \log P + NM)$ arithmetic operations, see \cite[Chap.\ 7]{PlPoStTa18}.
The Jacobian determinant $\abs{\det(\nabla T)}$, see \eqref{eq:jacobian},
can be approximated using finite differences.

\paragraph{Banach indicatrix}
The only part of \eqref{eq:reconstruction_dis} that is, in general, hard to determine analytically is the Banach indicatrix $\operatorname{Card}(T^{-1}(\by))$, 
which we approximate as follows.
For simplicity, we only look at the case of continuous parameters,
but we may apply the procedure for finitely many subintervals of $t$.
The indicatrix counts how often a point $\by\in\B^d_{2k_0}$ is ``hit'' by the transformation~$T$.
In the discrete setting, however, it is unlikely that a point $\by$ is exactly hit by $T(\bz_{m,n})$ for any $m,n$.
By \eqref{eq:T}, we can express the coverage for fixed time $t$ as the hemisphere 
\begin{equation} \label{eq:Yt}
\left\{T(\bx,t) : \bx\in\B^{d-1}_{k_0}\right\}
=
\left\{\by\in\R^d :
\abs{\by + k_0(t) R(t) \bs(t)} = k_0(t),\,
\by\cdot R(t) \be^d > -k_0(t) \bs(t)\cdot \be^d
\right\},
\end{equation}
which moves continuously with $t$. For sufficiently close time steps, a point $\by$ is hit by $T$ between the time steps $t_{n-1}$ and~$t_n$
if the sign of 
$
\abs{\by + k_0(t) R(t) \bs(t)} - k_0(t) 
$
changes between these time steps.
Hence, we approximate $\operatorname{Card}(T^{-1}(\by))$ by
\begin{equation} \label{eq:indicatrix}
\sum_{n=1}^N
\frac{s(n)}{2} \abs{\sgn\left(\abs{\by + k_0(t_{n-1}) R({t_{n-1}}) \bs(t_{n-1})} - k_0(t_{n-1}) \right)
  - \sgn\left(\abs{\by + k_0(t_{n}) R({t_{n}}) \bs(t_{n})} - k_0(t_{n}) \right)},
\end{equation}
where the sign function is given in \eqref{eq:sgn} and
$$
s(n) \coloneqq
\begin{cases}
1
,& \text{if } \by \cdot (R(t)\be^d) > -k_0(t_n) \bs(t_n)\cdot \be^d,\\
0 ,& \text{otherwise}.
\end{cases}
$$
Here the factor $1/2$ compensates the fact that a full sign change of the argument changes the $\sgn(\cdot)$ function by~2.

\paragraph{Inverse NDFT and density compensation}

We compare the discrete backpropagation with other approaches.
The forward model \eqref{eq:recon}, which maps $f$ to $\tilde\ktran u_t(\cdot, \rM)$,
can be discretized via an NDFT:
with the equispaced grid $\br_\bp$ from \eqref{eq:x-grid},
we have
\begin{equation} \label{eq:forward-discrete}
  \tilde \ktran u_{t_n}(\bx_m, \rM)
  \approx 
  \left(\frac{2\rM}{P}\right)^d
  \sqrt{\frac\pi2}\, \frac{ \i \e^{\i \kappa \rM}}{\kappa}
  \, {k_0^2}\, 
  \sum_{\bp\in \mathcal{I}_P^d}
  \e^{-\i T (\bz_{m,n}) \cdot \br_\bp}
  f \left(\br_{\bp} \right).
\end{equation}
The \emph{inverse NDFT} method \cite{KirQueRitSchSet21} consists in applying a conjugate gradient (CG) method to solve $\mathbf A \mathbf f = \bg$, where 
$$
\bg = 
\left( \tilde\ktran u_{t_n}(\bx_m, \rM) \left(\frac{P}{2\rM}\right)^d \frac{-\i \sqrt2\, \kappa \e^{-\i\kappa\rM}}{\sqrt{\pi}\, k_0^2}
 \right)_{m,n=1}^{M,N}
 $$ 
consists of the Fourier-transformed measurements, $\mathbf f = (f(\br_\bp))_{\bp\in \mathcal I_P^d}$, and $\mathbf A$ is the NDFT.
Note that our implementation of the inverse NDFT enforces $f$ to be real-valued as described in \cite[sect.\ 5.2]{BeiQue22}.

There are different approaches for numerical inversion of the NDFT, see \cite{AdcGatHan14,GelSon14} and \cite[sect.\ 3]{KirPot23}.
Furthermore,
we consider the adjoint NDFT with \emph{density compensation} factors that can be computed from $\by_{m,n}$ via a conjugate gradient (CG) method, see \cite{KirPot23a}. 
These factors play the same role as the weights in the backpropagation formula \eqref{eq:reconstruction_dis}
because they only depend on the measurement setup, i.e.\ the transformation $T$,
but not on the measured data $u_t$, 
and can therefore be precomputed.

\subsection{Numerical tests}\label{sec:numerics}

We consider a two-dimensional, real-valued test function $f\in L^1(\R^2)$ that contains both convex and nonconvex shapes, see \autoref{fig:heart1-f}.
Note that, different from \cite{KirQueRitSchSet21}, we plot the normalized scattering potential $f$, see \eqref{eq:f}.
We discretize $f$ on a $144\times 144$ grid and take a fixed wave number $k_0 = 2\pi$ and the measurement line $x_2=\rM=20$.
Both for the inverse NDFT and the density compensation, we perform 20~CG iterations, the same as in \cite{KirQueRitSchSet21}.
We use the library \cite{nfft3,KeKuPo09} for the (adjoint) NDFT in all tested algorithms.
As our main goal is to examine the different backpropagation formulae, we generate the sinogram data $u_t(x,\rM)$ with the same forward model~\eqref{eq:forward-discrete}.
In practice, one measures the total field $\utot_t(x,\rM) \coloneqq u_t(x,\rM)+\e^{\i k_0 \bs(t)\cdot(x,\rM)}$, i.e., the sum of the scattered field and incident field.

\tikzset{font=\tiny}
\newcommand{\modelwidth} {3.67cm}
\pgfplotsset{
  colormap={parula}{
    rgb255=(53,42,135)
    rgb255=(15,92,221)
    rgb255=(18,125,216)
    rgb255=(7,156,207)
    rgb255=(21,177,180)
    rgb255=(89,189,140)
    rgb255=(165,190,107)
    rgb255=(225,185,82)
    rgb255=(252,206,46)
    rgb255=(249,251,14)}}
\pgfmathsetmacro{\xmin}{-16.970563}
\pgfmathsetmacro{\xmax}{16.734860}
\pgfmathsetmacro{\cmin} {-0.001267} \pgfmathsetmacro{\cmax} {0.013932}

\begin{figure}[ht]\centering
\begin{subfigure}[t]{.33\textwidth}\centering
  \begin{tikzpicture}
    \begin{axis}[
      width=\modelwidth,
      height=\modelwidth,
      enlargelimits=false,
      scale only axis,
      axis on top,
      point meta min=\cmin,point meta max=\cmax,
      colorbar,colorbar style={
        width=.15cm, xshift=-0.5em,  
      },
      xlabel={$r_1$}, ylabel={$r_2$},
      x label style={at={(axis description cs:0.5,-0.1)},anchor=north},
      y label style={at={(axis description cs:-0.1,.5)},rotate=0,anchor=south},
      ]
      \addplot graphics [
      xmin=\xmin, xmax=\xmax,  ymin=\xmin, ymax=\xmax,
      ] {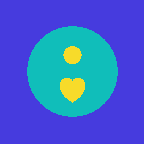};
    \end{axis}
  \end{tikzpicture}%
  \caption{Function $f$}
\end{subfigure}\hfill
\begin{subfigure}[t]{.33\textwidth}\centering
  \begin{tikzpicture}
  \begin{axis}[
    width=\modelwidth,
    height=\modelwidth,
    enlargelimits=false,
    scale only axis,
    axis on top,
    point meta min=0.9320, point meta max=1.2291,
    colorbar,colorbar style={
      width=.15cm, xshift=-0.5em, 
    },
    xlabel={$t$}, ylabel={$x$},
    x label style={at={(axis description cs:0.5,-0.1)},anchor=north},
    y label style={at={(axis description cs:-0.1,.5)},rotate=0,anchor=south},
    ]
    \addplot graphics [
    xmin=0, xmax=2.4,  ymin=64, ymax=-64,
    ] {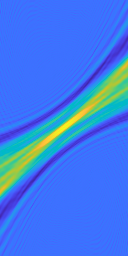};
  \end{axis}
  \end{tikzpicture}%
  \caption{Sinogram for \autoref{fig:heart1-angle} (angle scan) \label{fig:heart1-angle-sinogram}}
\end{subfigure}\hfill
\begin{subfigure}[t]{.33\textwidth}\centering
  \begin{tikzpicture}
  \begin{axis}[
    width=\modelwidth,
    height=\modelwidth,
    enlargelimits=false,
    scale only axis,
    axis on top,
    point meta min=0.9320, point meta max=1.2291,
    colorbar,colorbar style={
      width=.15cm, xshift=-0.5em, 
    },
    xlabel={$t$}, ylabel={$x$},
    x label style={at={(axis description cs:0.5,-0.1)},anchor=north},
    y label style={at={(axis description cs:-0.1,.5)},rotate=0,anchor=south},
    ]
    \addplot graphics [
    xmin=0, xmax=4.8,  ymin=-64, ymax=64,
    ] {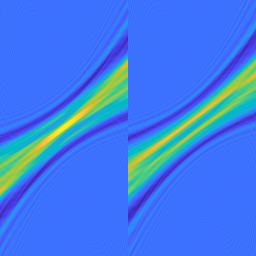};
  \end{axis}
  \end{tikzpicture}
  \caption{Sinogram for \autoref{fig:heart1-anglerot} (two angle scans combined) \label{fig:heart1-anglerot-sinogram}}
  \end{subfigure}
  \caption{Ground truth $f$ (left) and absolute value of the sinograms $\abs{\utot_t(x, \rM)}$. \label{fig:heart1-f}}
\end{figure}

\paragraph*{Angle scan}
We first consider an angle scanning setup, see \autoref{sec:incidence}, with a fixed position of the object, $R(t) = \mathrm{id}$, 
and the incidence direction
$\bs(t) = \bs_0(t)/\abs{\bs_0(t)}$,
where $\bs_0(t) = (t-1.2,1)$
for $t\in[0,2.4]$ with $N=128$ time steps.
We discretize $x$ on the equispaced grid $2M^{-1} \mathcal{I}_M^1$ with $M=128$.
\autoref{fig:heart1-angle-sinogram} depicts the simulated sinogram $\utot_t(x)$.
We always take the real part of the reconstructions of $f$ and compare the quality using the peak signal-to-noise ratio (PSNR) and structural similarity index measure (SSIM).

\autoref{fig:heart1-angle} shows the reconstructions, the Fourier coverage $\mathcal Y$ and $\mathcal Y_\sym$, and the respective Banach indicatrix $\operatorname{Card}( T^{-1}(\by))$ or $\operatorname{Card}( T_\sym^{-1}(\by))$ estimated via \eqref{eq:indicatrix}.
Both the backpropagation \eqref{eq:reconstruction_dis} and its symmetrized version yield similar results: they reveal the correct structure of the object, but underestimate the values of~$f$.
Moreover, they produce some artifacts due to the missing parts in the Fourier coverage,
also known as the ``missing cone'', cf.\ \cite{krauze2020optical}.
The inverse NDFT gives a better reconstruction with the correct level of the peaks of $f$, but still shows some missing cone artifacts shaped similarly as in the backpropagation. The density compensation, which yields the largest error, overestimates the function and produces smoother artifacts that makes the two inclusions almost seem like a single large one.

\renewcommand{\modelwidth}{4.2cm}

\begin{figure}[ht]\centering
  \begin{subfigure}[t]{.325\textwidth}\centering
    \begin{tikzpicture}
    \begin{axis}[
      width=\modelwidth, height=\modelwidth,
      enlargelimits=false,
      scale only axis,
      axis on top,
      ticks=none,
      point meta min=\cmin,point meta max=\cmax,
      colorbar,colorbar style={
        width=.15cm, xshift=-0.5em, 
      },
      ]
      \addplot graphics [
      xmin=\xmin, xmax=\xmax,  ymin=\xmin, ymax=\xmax,
      ] {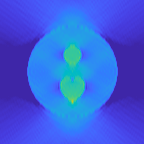};
    \end{axis}
    \end{tikzpicture}
    \caption{Backpropagation $f_{\mathcal{Y}}$ from \eqref{eq:reconstruction_dis},\\ PSNR 20.7, SSIM 0.343 }
  \end{subfigure}\hfill
  \begin{subfigure}[t]{.325\textwidth}\centering
  \begin{tikzpicture}
    \begin{axis}[
      width=\modelwidth,
      height=\modelwidth,
      enlargelimits=false,
      scale only axis,
      axis on top,
      ticks=none,
      point meta min=.8, point meta max=4.2,
      colorbar,colorbar style={
        width=.15cm, xshift=-0.5em,   
      },
      ]
      \addplot graphics [
      xmin=-12, xmax=12,  ymin=-12, ymax=12,
      ] {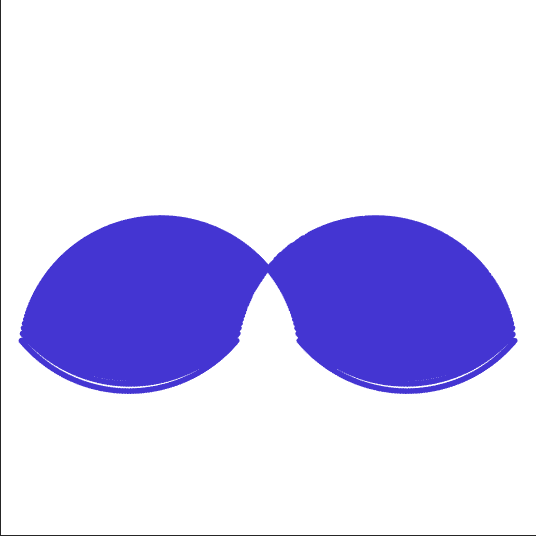};
    \end{axis}
  \end{tikzpicture}
  \caption{Fourier coverage $\mathcal Y$ for (a), the color is the Banach indicatrix $\operatorname{Card}( T^{-1}(\by))$}
  \end{subfigure}\hfill
  \begin{subfigure}[t]{.325\textwidth}\centering
  \begin{tikzpicture}
    \begin{axis}[
      width=\modelwidth,
      height=\modelwidth,
      enlargelimits=false,
      scale only axis,
      axis on top,
      ticks=none,
      point meta min=\cmin,point meta max=\cmax,
      colorbar,colorbar style={
        width=.15cm, xshift=-0.5em, 
      },
      ]
      \addplot graphics [
      xmin=\xmin, xmax=\xmax,  ymin=\xmin, ymax=\xmax,
      ] {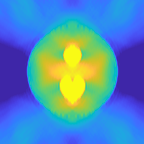};
    \end{axis}
  \end{tikzpicture}
  \caption{Density compensation \cite{KirPot23a},\\ PSNR 17.9, SSIM 0.273}
  \end{subfigure}
  
  \begin{subfigure}[t]{.325\textwidth}\centering
  \begin{tikzpicture}
    \begin{axis}[
      width=\modelwidth,
      height=\modelwidth,
      enlargelimits=false,
      scale only axis,
      axis on top,
      ticks=none,
      point meta min=\cmin,point meta max=\cmax,
      colorbar,colorbar style={
        width=.15cm, xshift=-0.5em, 
      },
      ]
      \addplot graphics [
      xmin=\xmin, xmax=\xmax,  ymin=\xmin, ymax=\xmax,
      ] {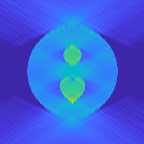};
    \end{axis}
  \end{tikzpicture}
  \caption{Backpropagation $f_{\mathcal{Y}_\sym}$ with symmetrization, PSNR 20.9, SSIM 0.323}
  \end{subfigure}\hfill
  \begin{subfigure}[t]{.325\textwidth}\centering
  \begin{tikzpicture}
    \begin{axis}[
      width=\modelwidth,
      height=\modelwidth,
      enlargelimits=false,
      scale only axis,
      axis on top,
      ticks=none,
      point meta min=.8, point meta max=4.2, 
      colorbar,colorbar style={
        width=.15cm, xshift=-0.5em,  
      },
      ]
      \addplot graphics [
      xmin=-12, xmax=12,  ymin=-12, ymax=12,
      ] {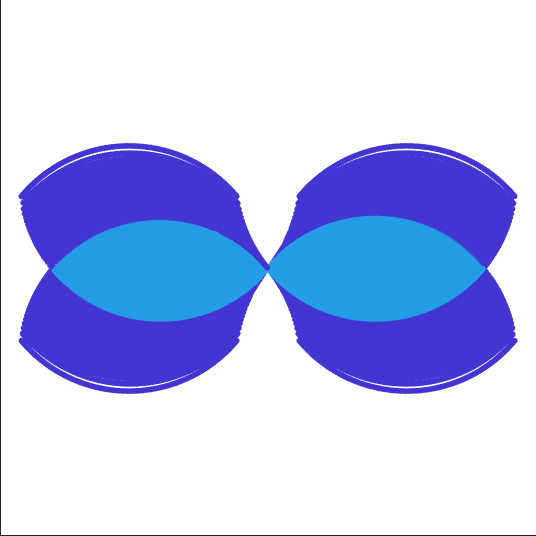};
    \end{axis}
  \end{tikzpicture}
  \caption{Banach indicatrix $\operatorname{Card}(T_\sym^{-1}(\by))$ and Fourier coverage $\mathcal{Y}_\sym$ for (d) \label{fig:heart1-angle-crofton-sym}}
  \end{subfigure}\hfill
  \begin{subfigure}[t]{.325\textwidth}\centering
  \begin{tikzpicture}
    \begin{axis}[
      width=\modelwidth,
      height=\modelwidth,
      enlargelimits=false,
      scale only axis,
      axis on top,
      ticks=none,
      point meta min=\cmin,point meta max=\cmax,
      colorbar,colorbar style={
        width=.15cm, xshift=-0.5em,  
      },
      ]
      \addplot graphics [
      xmin=\xmin, xmax=\xmax,  ymin=\xmin, ymax=\xmax,
      ] {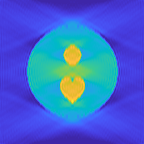};
    \end{axis}
  \end{tikzpicture}
  \caption{Inverse NDFT \cite{KirQueRitSchSet21},\\ PSNR 28.0, SSIM 0.593}
  \end{subfigure}
  
  \caption{Reconstructions for angle scan with fixed object.
    For comparison, we also show the inverse NDFT \cite{KirQueRitSchSet21} and the density compensation \cite{KirPot23a}.
  \label{fig:heart1-angle}}
\end{figure}

\paragraph*{Angle scan and 90° rotation}
Our second setup demonstrates the necessity of non-smooth parameters.
We take the above experiment,
repeat it with the object rotated by 90°, and combine the data of both parts.
Formally, we set 
$\bs(t) = \bs_0(t)/\abs{\bs_0(t)}$
where $\bs_0(t) = (\alpha(t),1)$ with
\begin{alignat*}{6}
  \alpha(t) &= t-1.2 &&\quad\text{and}\quad && 
  R(t) = \begin{psmallmatrix} 1 & 0 \\ 0 & 1 \end{psmallmatrix} \quad&& \text{if } t\in[0,2.4),\\ 
  \alpha(t) &= t-3.6 &&\quad\text{and}\quad &&
  R(t) = \begin{psmallmatrix} 0 & 1 \\ -1 & 0 \end{psmallmatrix}\quad &&
  \text{if } t\in[2.4,4.8].
\end{alignat*}
The sinogram $\utot_t(x,\rM)$ in \autoref{fig:heart1-anglerot-sinogram} shows the discontinuity at $t=2.4$. The reconstructions are plotted in \autoref{fig:heart1-anglerot}.
Again, the backpropagation yields better results than the density compensation. We see that the symmetrized backpropagation in \autoref{fig:heart1-anglerot-bp-sym} gives a slightly better reconstruction than the one without in \autoref{fig:heart1-anglerot-bp} and is almost comparable with the inverse NDFT in \autoref{fig:heart1-anglerot-cg}.
Furthermore, \autoref{fig:heart1-anglerot-bp-nocrofton} indicates that the backpropagation becomes considerably worse without the Banach indicatrix.

\begin{figure}[ht!]\centering  
  \begin{subfigure}[t]{.325\textwidth}\centering
    \begin{tikzpicture}
      \begin{axis}[
        width=\modelwidth,
        height=\modelwidth,
        enlargelimits=false,
        scale only axis,
        axis on top,
        ticks=none,
        point meta min=\cmin,point meta max=\cmax,
        colorbar,colorbar style={
          width=.15cm, xshift=-0.5em,  
        },
        ]
        \addplot graphics [
        xmin=\xmin, xmax=\xmax,  ymin=\xmin, ymax=\xmax,
        ] {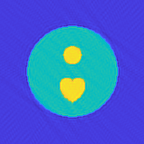};
      \end{axis}
    \end{tikzpicture}
    \caption{Backpropagation $f_{\mathcal{Y}}$ from \eqref{eq:reconstruction_dis},\\ PSNR 37.5, SSIM 0.958 \label{fig:heart1-anglerot-bp}}
  \end{subfigure}\hfill
  \begin{subfigure}[t]{.325\textwidth}\centering
  \begin{tikzpicture}
    \begin{axis}[
      width=\modelwidth,
      height=\modelwidth,
      enlargelimits=false,
      scale only axis,
      axis on top,
      ticks=none,
      point meta min=.8, point meta max=4.2, 
      colorbar,colorbar style={
        width=.15cm, xshift=-0.5em,  
      },
      ]
      \addplot graphics [
      xmin=-12, xmax=12,  ymin=-12, ymax=12,
      ] {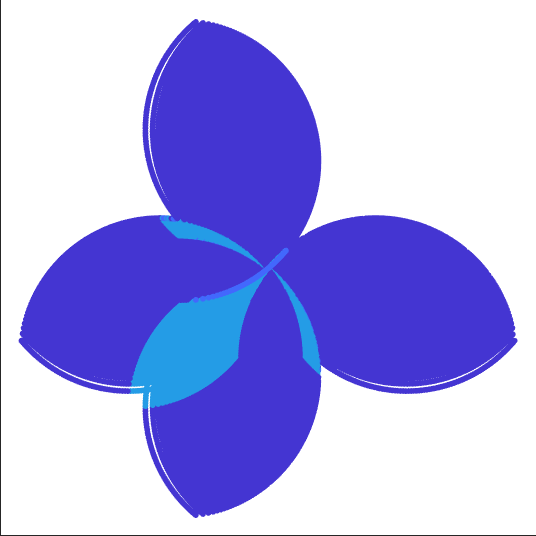};
    \end{axis}
  \end{tikzpicture}
  \caption{Fourier coverage $\mathcal Y$ for (a), the color is the Banach indicatrix $\operatorname{Card}(T^{-1}(\by))$}
  \end{subfigure}\hfill
  \begin{subfigure}[t]{.325\textwidth}\centering
  \begin{tikzpicture}
    \begin{axis}[
      width=\modelwidth,
      height=\modelwidth,
      enlargelimits=false,
      scale only axis,
      axis on top,
      ticks=none,
      point meta min=\cmin,point meta max=\cmax,
      colorbar,colorbar style={
        width=.15cm, xshift=-0.5em,  
      },
      ]
      \addplot graphics [
      xmin=\xmin, xmax=\xmax,  ymin=\xmin, ymax=\xmax,
      ] {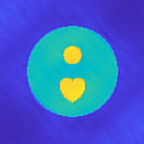};
    \end{axis}
  \end{tikzpicture}
  \caption{Density compensation \cite{KirPot23a},\\ PSNR 33.8, SSIM 0.624}
  \end{subfigure}
  
  \begin{subfigure}[t]{.325\textwidth}\centering
    \begin{tikzpicture}
      \begin{axis}[
        width=\modelwidth,
        height=\modelwidth,
        enlargelimits=false,
        scale only axis,
        axis on top,
        ticks=none,
        point meta min=\cmin,point meta max=\cmax,
        colorbar,colorbar style={
          width=.15cm, xshift=-0.5em,  
        },
        ]
        \addplot graphics [
        xmin=\xmin, xmax=\xmax,  ymin=\xmin, ymax=\xmax,
        ] {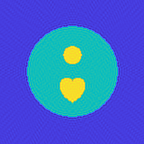};
      \end{axis}
    \end{tikzpicture}
    \caption{Symmetrized Backpropagation $f_{\mathcal{Y}_\sym}$ from \eqref{eq:reconstruction2}, PSNR 38.3, SSIM 0.958 \label{fig:heart1-anglerot-bp-sym}}
  \end{subfigure}\hfill
  \begin{subfigure}[t]{.325\textwidth}\centering
  \begin{tikzpicture}
    \begin{axis}[
      width=\modelwidth,
      height=\modelwidth,
      enlargelimits=false,
      scale only axis,
      axis on top,
      ticks=none,
      point meta min=.8, point meta max=4.2, 
      colorbar,colorbar style={
        width=.15cm, xshift=-0.5em,  
      },
      ]
      \addplot graphics [
      xmin=-12, xmax=12,  ymin=-12, ymax=12,
      ] {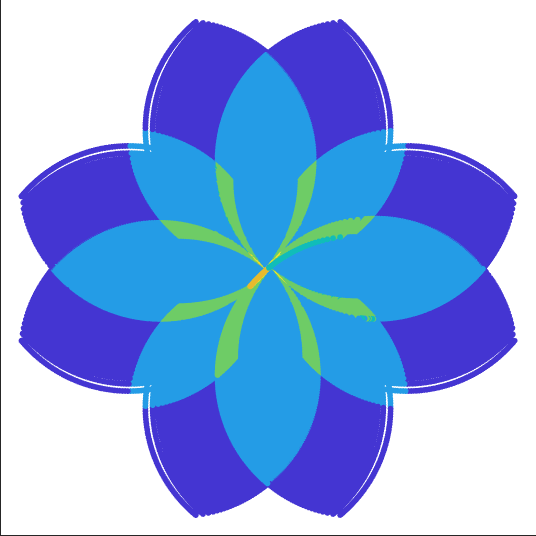};
    \end{axis}
  \end{tikzpicture}
  \caption{Banach indicatrix $\operatorname{Card}(T_\sym^{-1}(\by))$ and Fourier coverage $\mathcal{Y}_\sym$ for (d)}
  \end{subfigure}\hfill
  \begin{subfigure}[t]{.325\textwidth}\centering
    \begin{tikzpicture}
    \begin{axis}[
      width=\modelwidth,
      height=\modelwidth,
      enlargelimits=false,
      scale only axis,
      axis on top,
      ticks=none,
      point meta min=\cmin,point meta max=\cmax,
      colorbar,colorbar style={
        width=.15cm, xshift=-0.5em,  
      },
      ]
      \addplot graphics [
      xmin=\xmin, xmax=\xmax,  ymin=\xmin, ymax=\xmax,
      ] {images/heart1-anglerot-rec-bp-sym.png};
    \end{axis}
    \end{tikzpicture}
    \caption{Inverse NDFT \cite{KirQueRitSchSet21},\\ PSNR 38.5, SSIM 0.980 \label{fig:heart1-anglerot-cg}}
  \end{subfigure}
  \caption{Two angle scans combined: the first scan for the initial object, the second scan with the object rotated by 90°.
  \label{fig:heart1-anglerot}}
\end{figure}
\begin{figure}[ht!] \centering
  \begin{tikzpicture}
    \begin{axis}[
      width=\modelwidth,
      height=\modelwidth,
      enlargelimits=false,
      scale only axis,
      axis on top,
      ticks=none,
      point meta min=\cmin,point meta max=\cmax,
      colorbar,colorbar style={
        width=.15cm, xshift=-0.5em,  
      },
      ]
      \addplot graphics [
      xmin=\xmin, xmax=\xmax,  ymin=\xmin, ymax=\xmax,
      ] {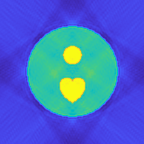};
    \end{axis}
  \end{tikzpicture}
  \caption{Backpropagation $f_{\mathcal{Y}}$ for constant indicatrix, i.e., we use \eqref{eq:reconstruction} with $\operatorname{Card}(T_\sym^{-1}(\cdot)) \equiv 1$, otherwise same setup as in \autoref{fig:heart1-anglerot}. PSNR 31.2, SSIM 0.605 \label{fig:heart1-anglerot-bp-nocrofton}}
\end{figure}

To evaluate the stability of the algorithms, we repeat the above simulation and add Gaussian white noise to the measured sinogram of the total field~$\utot_t$ with a relative noise level of 1\,\%, i.e., $\norm{\utot_\mathrm{noisy}-\utot}/\norm{\utot}\approx1\,\%$ with the discrete $L^2$ norm $\norm{\utot}=(\sum_{n,m=1}^{N,M}|\utot_{t_n}(x_m,\rM)|^2)^{1/2}$.
Note that the noise level relative to the scattered field $u=\utot-\ui$ is larger, namely $\norm{u_\mathrm{noisy}-u}/\norm{u}\approx 5.8\,\%$.
For the reconstructions in \autoref{fig:heart1-anglerot-noise}\,(a)--(d), we did not apply further regularization.
All methods yield visually similar results: they correctly reconstruct the right shape and amplitude, but are distorted by some noise artifacts distributed almost equally over the image.
Only the density compensation has structural artifacts visible in the background.
The error metrics are best for the backpropagation~$f_{\mathcal{Y}}$ by a small margin.
As in \cite{BeiQue22}, we post-process the noisy reconstructions via total variation (TV) denoising and enforcing the non-negativity of the reconstruction.
In particular, we employ primal-dual splitting \cite{ChaCasCreNovPoc10} to minimize 
\begin{equation}\label{eq:tv}
f\mapsto \tfrac12\norm{f-f_{\mathcal Y}}_{L^2}^2+\lambda\norm{f}_{\mathrm{TV}}+\chi_{\ge0}(f),
\end{equation}
where $f_{\mathcal Y}$ is the backpropagation, $\lambda>0$ a regularization parameter, $\norm{\cdot}_{\mathrm{TV}}$ the total variation seminorm, and $\chi_{\ge0}(f)=0$ if $f\ge0$ everywhere and $+\infty$ otherwise.
The denoised images shown in \autoref{fig:heart1-anglerot-noise}\,(e)--(h) are significantly better. Again, all methods perform on a similar level.
Noteworthy, the symmetrized backpropagation $f_{\mathcal{Y}_\mathrm{sym}}$ profits the most from denoising, even slightly beating the denoised inverse NDFT.

\begin{figure}[htb!] 
  \renewcommand{\modelwidth}{3.56cm}
  \begin{subfigure}[t]{.235\textwidth}
    \begin{tikzpicture}
      \begin{axis}[
        width=\modelwidth,
        height=\modelwidth,
        enlargelimits=false,
        scale only axis,
        axis on top,
        ticks=none,
        point meta min=\cmin,point meta max=\cmax,
        ]
        \addplot graphics [
        xmin=\xmin, xmax=\xmax,  ymin=\xmin, ymax=\xmax,
        ] {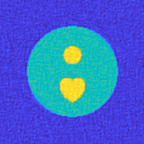};
      \end{axis}
    \end{tikzpicture}
    \caption{Backpropagation $f_{\mathcal{Y}}$,\\ PSNR 34.8, SSIM 0.800}
  \end{subfigure}
  \begin{subfigure}[t]{.235\textwidth}
    \begin{tikzpicture}
      \begin{axis}[
        width=\modelwidth,
        height=\modelwidth,
        enlargelimits=false,
        scale only axis,
        axis on top,
        ticks=none,
        point meta min=\cmin,point meta max=\cmax,
        ]
        \addplot graphics [
        xmin=\xmin, xmax=\xmax,  ymin=\xmin, ymax=\xmax,
        ] {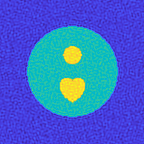};
      \end{axis}
    \end{tikzpicture}
    \caption{Symmetrized Backpropagation $f_{\mathcal{Y}_\sym}$,\\ PSNR 33.6, SSIM 0.722}
  \end{subfigure}
  \begin{subfigure}[t]{.235\textwidth}
  \begin{tikzpicture}
    \begin{axis}[
      width=\modelwidth,
      height=\modelwidth,
      enlargelimits=false,
      scale only axis,
      axis on top,
      ticks=none,
      point meta min=\cmin,point meta max=\cmax,
      ]
      \addplot graphics [
      xmin=\xmin, xmax=\xmax,  ymin=\xmin, ymax=\xmax,
      ] {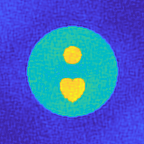};
    \end{axis}
  \end{tikzpicture}
  \caption{Density compensation \cite{KirPot23a},\\ PSNR 32.4, SSIM 0.553}
  \end{subfigure}
  \begin{subfigure}[t]{.275\textwidth}
    \begin{tikzpicture}
      \begin{axis}[
        width=\modelwidth,
        height=\modelwidth,
        enlargelimits=false,
        scale only axis,
        axis on top,
        ticks=none,
        point meta min=\cmin,point meta max=\cmax,
        ]
        \addplot graphics [
        xmin=\xmin, xmax=\xmax,  ymin=\xmin, ymax=\xmax,
        ] {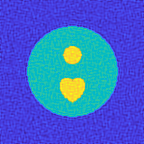};
      \end{axis}
    \end{tikzpicture}\hspace*{-5.5mm}%
    \begin{tikzpicture}
      \pgfplotscolorbardrawstandalone[colormap name=parula,
      point meta min=\cmin,point meta max=\cmax, 
      colorbar style={height=\modelwidth, width=.15cm, xshift=0mm, 
      },
      yticklabel style={
        xshift=-.5mm,yshift=0mm,font=\tiny
      } ]
    \end{tikzpicture}
    \caption{Inverse NDFT \cite{KirQueRitSchSet21},\\ PSNR 33.9, SSIM 0.743}
  \end{subfigure}
  
  \begin{subfigure}[t]{.235\textwidth}
    \begin{tikzpicture}
      \begin{axis}[
        width=\modelwidth,
        height=\modelwidth,
        enlargelimits=false,
        scale only axis,
        axis on top,
        ticks=none,
        point meta min=\cmin,point meta max=\cmax,
        ]
        \addplot graphics [
        xmin=\xmin, xmax=\xmax,  ymin=\xmin, ymax=\xmax,
        ] {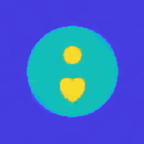};
      \end{axis}
    \end{tikzpicture}
    \caption{TV denoising of backpropagation $f_{\mathcal{Y}}$,\\ PSNR 37.8, SSIM 0.976}
  \end{subfigure}
  \begin{subfigure}[t]{.235\textwidth}
    \begin{tikzpicture}
      \begin{axis}[
        width=\modelwidth,
        height=\modelwidth,
        enlargelimits=false,
        scale only axis,
        axis on top,
        ticks=none,
        point meta min=\cmin,point meta max=\cmax,
        ]
        \addplot graphics [
        xmin=\xmin, xmax=\xmax,  ymin=\xmin, ymax=\xmax,
        ] {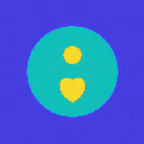};
      \end{axis}
    \end{tikzpicture}
    \caption{TV denoising of symmetr\-ized backpropagation $f_{\mathcal{Y}_\sym}$,\\ PSNR 39.5, SSIM 0.982}
  \end{subfigure}
  \begin{subfigure}[t]{.235\textwidth}
  \begin{tikzpicture}
    \begin{axis}[
      width=\modelwidth,
      height=\modelwidth,
      enlargelimits=false,
      scale only axis,
      axis on top,
      ticks=none,
      point meta min=\cmin,point meta max=\cmax,
      ]
      \addplot graphics [
      xmin=\xmin, xmax=\xmax,  ymin=\xmin, ymax=\xmax,
      ] {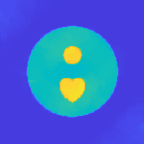};
    \end{axis}
  \end{tikzpicture}
  \caption{TV denoising of density compensation \cite{KirPot23a},\\ PSNR 36.4, SSIM 0.900}
  \end{subfigure}
  \begin{subfigure}[t]{.275\textwidth}
    \begin{tikzpicture}
      \begin{axis}[
        width=\modelwidth,
        height=\modelwidth,
        enlargelimits=false,
        scale only axis,
        axis on top,
        ticks=none,
        point meta min=\cmin,point meta max=\cmax,
        ]
        \addplot graphics [
        xmin=\xmin, xmax=\xmax,  ymin=\xmin, ymax=\xmax,
        ] {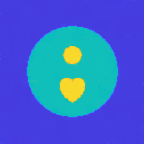};
      \end{axis}
    \end{tikzpicture}\hspace*{-5.5mm}%
    \begin{tikzpicture}
    \pgfplotscolorbardrawstandalone[colormap name=parula,
    point meta min=\cmin,point meta max=\cmax, 
    colorbar style={height=\modelwidth, width=.15cm, xshift=0mm, 
    },
    yticklabel style={
      xshift=-.5mm,yshift=0mm,font=\tiny
    } ]
    \end{tikzpicture}
    \caption{TV denoising of inverse NDFT \cite{KirQueRitSchSet21},\\ PSNR 38.9, SSIM 0.980}
  \end{subfigure}
  \caption{Reconstructions with 1\,\% Gaussian noise added to the sinogram $\utot_t$. In the second row, we post-processed the reconstructions from the first row via TV denoising \eqref{eq:tv} with $\lambda=0.02$. Otherwise, the setup is the same as in \autoref{fig:heart1-anglerot}.
  \label{fig:heart1-anglerot-noise}}
\end{figure}

The computation times on an Intel Core i7-10700 CPU with 32 GB memory are reported in \autoref{tab:time}.
As expected, the backpropagation algorithms are much faster, because they use only one adjoint NDFT whereas the inverse NDFT method uses a forward and adjoint step of the NDFT in each iteration.
The precomputation of the Banach indicatrix and the Jacobian determinant, which is independent of the data $u$, is done in reasonable time. 
Note that here we do not include the time of the precomputation step inside the NFFT library, because it is required in all four algorithms.

\begin{table}[ht]\centering
  \begin{tabular*}{.92\textwidth}{l|cccc}
     & {Backpropagation} & \shortstack{Symmetrized\cr backpropagation} & \shortstack{Density\cr compensation} & \shortstack{Inverse\cr NDFT}
    \\\hline
    Time & 11 & 11 & 11 & 202
    \\
    Precomputation & 89 & 142 & 190 & --
  \end{tabular*}
  \caption{Computation times (in ms) for \autoref{fig:heart1-anglerot}. 
    \label{tab:time}}
\end{table}

\paragraph*{Object rotation}
In our third setup, we take the fixed incidence $\bs = (0,1)$ and the rotation $R(t) = \begin{psmallmatrix} \cos t & -\sin t \\ \sin t & \cos t \end{psmallmatrix}$ for $t\in [0,3\pi/2]$ as in \autoref{fig:2d-max-cover-34}.
Here the reconstruction highly depends on the discretization of $x$ near the boundary.
Therefore we use a different grid $x_m = \cos (\pi m / M)$ for $m=1,\dots,M=160$, such that the discrete Fourier coverage $\{T(x_m,t_n)\}_{m,n=1}^{M,N}$ does not have large gaps around the origin.
The reconstructions are shown in \autoref{fig:heart1-fig4}, where we can see a significant effect of the symmetrization.
This is expected as the Fourier coverage $\mathcal{Y}$ has large gaps, see \autoref{fig:heart1-fig4-indicatrix},
but its symmetrization $\mathcal{Y}_\sym$ from \eqref{eq:Ysym} is the whole disk of radius $2 k_0$.
The visual quality of the symmetrized backpropagation is comparably to the inverse NDFT, but the error measures are somewhat worse.
Furthermore, we notice some numerical issues of the estimation of the Banach indicatrix $\operatorname{Card}(T^{-1}(\by))$ near the boundary $\abs{\by}=2\pi$ corresponding to $\abs{x} = 1$.

\begin{figure}[ht!]\centering
  \begin{subfigure}[t]{.325\textwidth}\centering
    \begin{tikzpicture}
      \begin{axis}[
        width=\modelwidth, height=\modelwidth,
        enlargelimits=false,
        scale only axis,
        axis on top,
        ticks=none,
        point meta min=\cmin,point meta max=\cmax,
        colorbar,colorbar style={
          width=.15cm, xshift=-0.5em, 
        },
        ]
        \addplot graphics [
        xmin=\xmin, xmax=\xmax,  ymin=\xmin, ymax=\xmax,
        ] {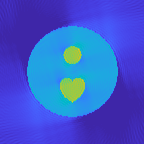};
      \end{axis}
    \end{tikzpicture}
    \caption{Backpropagation $f_{\mathcal{Y}}$ from \eqref{eq:reconstruction_dis},\\ PSNR 24.5, SSIM 0.454}
  \end{subfigure}\hfill
  \begin{subfigure}[t]{.325\textwidth}\centering
    \begin{tikzpicture}
      \begin{axis}[
        width=\modelwidth, height=\modelwidth,
        enlargelimits=false,
        scale only axis,
        axis on top,
        ticks=none,
        point meta min=.8, point meta max=4.2,
        colorbar,colorbar style={
          width=.15cm, xshift=-0.5em,   
        },
        ]
        \addplot graphics [
        xmin=-13, xmax=13,  ymin=-13, ymax=13,
        ] {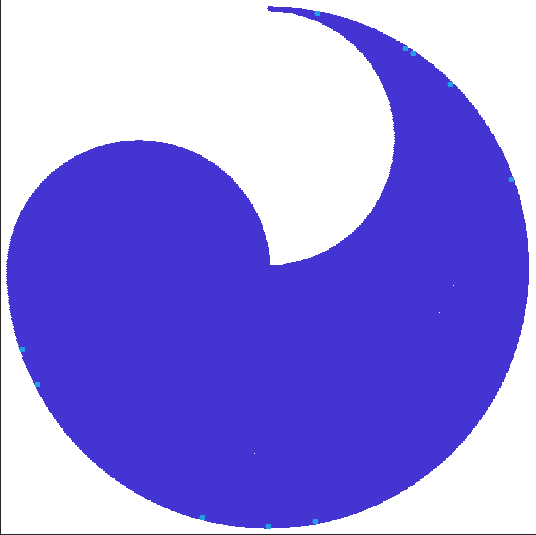};
      \end{axis}
    \end{tikzpicture}
    \caption{Fourier coverage $\mathcal Y$ for (a), the color is the Banach indicatrix $\operatorname{Card}( T^{-1}(\by))$ \label{fig:heart1-fig4-indicatrix}}
  \end{subfigure}\hfill
  \begin{subfigure}[t]{.325\textwidth}\centering
    \begin{tikzpicture}
      \begin{axis}[
        width=\modelwidth, height=\modelwidth,
        enlargelimits=false,
        scale only axis,
        axis on top,
        ticks=none,
        point meta min=\cmin,point meta max=\cmax,
        colorbar,colorbar style={
          width=.15cm, xshift=-0.5em, 
        },
        ]
        \addplot graphics [
        xmin=\xmin, xmax=\xmax,  ymin=\xmin, ymax=\xmax,
        ] {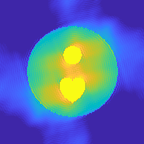};
      \end{axis}
    \end{tikzpicture}
    \caption{Density compensation \cite{KirPot23a},\\ PSNR 20.8, SSIM 0.359}
  \end{subfigure}
  
  \begin{subfigure}[t]{.325\textwidth}\centering
    \begin{tikzpicture}
      \begin{axis}[
        width=\modelwidth, height=\modelwidth,
        enlargelimits=false,
        scale only axis,
        axis on top,
        ticks=none,
        point meta min=\cmin,point meta max=\cmax,
        colorbar,colorbar style={
          width=.15cm, xshift=-0.5em, 
        },
        ]
        \addplot graphics [
        xmin=\xmin, xmax=\xmax,  ymin=\xmin, ymax=\xmax,
        ] {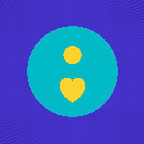};
      \end{axis}
    \end{tikzpicture}
    \caption{Backpropagation $f_{\mathcal{Y}_\sym}$ with symmetrization, PSNR 32.5, SSIM 0.410}
  \end{subfigure}\hfill
  \begin{subfigure}[t]{.325\textwidth}\centering
    \begin{tikzpicture}
      \begin{axis}[
        width=\modelwidth, height=\modelwidth,
        enlargelimits=false,
        scale only axis,
        axis on top,
        ticks=none,
        point meta min=.8, point meta max=4.2, 
        colorbar,colorbar style={
          width=.15cm, xshift=-0.5em,  
        },
        ]
        \addplot graphics [
        xmin=-13, xmax=13,  ymin=-13, ymax=13,
        ] {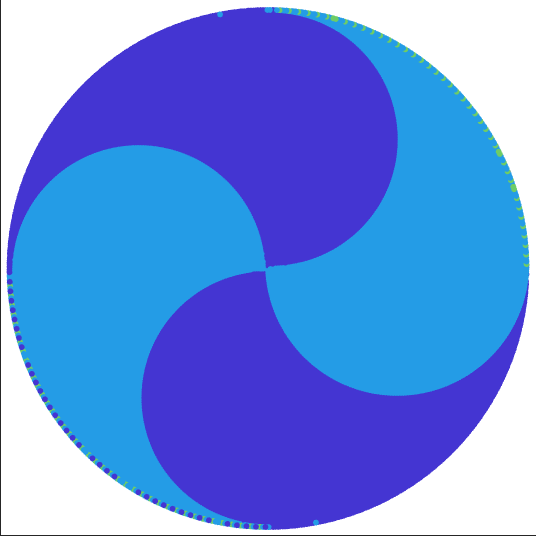};
      \end{axis}
    \end{tikzpicture}
    \caption{Banach indicatrix $\operatorname{Card}(T_\sym^{-1}(\by))$ and Fourier coverage $\mathcal{Y}_\sym$ for (d)}
  \end{subfigure}\hfill
  \begin{subfigure}[t]{.325\textwidth}\centering
    \begin{tikzpicture}
      \begin{axis}[
        width=\modelwidth, height=\modelwidth,
        enlargelimits=false,
        scale only axis,
        axis on top,
        ticks=none,
        point meta min=\cmin,point meta max=\cmax,
        colorbar,colorbar style={
          width=.15cm, xshift=-0.5em,  
        },
        ]
        \addplot graphics [
        xmin=\xmin, xmax=\xmax,  ymin=\xmin, ymax=\xmax,
        ] {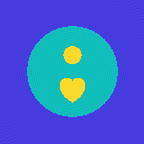};
      \end{axis}
    \end{tikzpicture}
    \caption{Inverse NDFT \cite{KirQueRitSchSet21},\\ PSNR 41.9, SSIM 0.983}
  \end{subfigure}
  \caption{Reconstructions for setup of \autoref{fig:2d-max-cover-34}.
    For comparison, we also show the inverse NDFT \cite{KirQueRitSchSet21} and the density compensation \cite{KirPot23a}.
    \label{fig:heart1-fig4}}
\end{figure}

\section{Conclusion}

In this article we have studied several questions related to diffraction tomography in $\R^d$. 
We derived a generalization of the Fourier diffraction theorem for compactly supported inhomogeneity $g\in L^1(\R^d)$ and a measurement hyperplane that may intersect $\supp g.$ 
Building on this result, we presented a novel filtered backpropagation formula, that is, an explicit expression for the $L^2$ best approximation of $f$ given the available data.
This reconstruction formula correctly handles a general experiment where a change of illumination and a rigid motion of the object occur simultaneously.
The critical quantity in the evaluation of the resulting $d$-dimensional integral is the Banach indicatrix, which can be difficult to determine exactly.
We have addressed this issue with a numerical estimation method.
Numerical tests suggest that the filtered backpropagation formula can compete with the inverse NDFT in terms of reconstruction quality, while having lower computation times.

\subsection*{Acknowledgments} 
This work is supported by the Austrian Science Fund (FWF), SFB 10.55776/F68 (``Tomography across the Scales''), and by the German Research Foundation DFG (STE 571/19-1, project number 495365311). The financial support by the Austrian Federal Ministry for Digital and Economic Affairs, the National Foundation for Research, Technology and Development and the Christian Doppler Research Association is gratefully acknowledged. This work was initiated while the third-named author was with the Johann Radon Institute for Computational and Applied Mathematics (RICAM) of the Austrian Academy of Sciences. For open access purposes, the authors have applied a CC BY public copyright license to any author-accepted manuscript version arising from this submission. 

\section*{References}
\renewcommand{\i}{\ii}
\printbibliography[heading=none]

\end{document}